\newcommand\Vect{\operatorname{Vect}}
\colorlet{agreen}{green!50!black}
\newcommand\ZZ{\mathbf{Z}}
\newcommand\NN{\mathbf{N}}
\theoremstyle{plain}
\newtheorem*{thm1.2}{(1.2) Theorem}
\newtheorem*{thm1.3}{(1.3) Theorem}
\newtheorem*{thm1.4}{(1.4) Theorem}
\newtheorem*{propA*}{Proposition A}
\newtheorem*{propB*}{Proposition B}
\newtheorem*{thmC*}{Theorem C}
\newtheorem*{propD*}{Proposition D}
\newtheorem{prop}{Proposition}[section]
\newtheorem{cor}[prop]{Corollary}
\theoremstyle{definition}
\newtheorem{point}[prop]{}
\newtheorem{Def}[prop]{Definition}
\newtheorem*{Def*}{Definition}
\newtheorem{example}[prop]{Example}
\newtheorem*{notation*}{Notation}
\newtheorem*{question*}{Question}
\newcommand{\caln}{\mathcal N}
\newcommand{\calx}{\mathcal X}
\newcommand{\calc}{\mathcal C}
\newcommand{\RR}{{\mathbf R}}
\newcommand{\QQ}{{\mathbf Q}}
\newcommand{\N}{\mathbf N}
\newcommand{\V}{\mathcal V}
\newcommand{\cS}{\mathcal S}
\newcommand{\cT}{\mathcal T}
\newcommand{\W}{\mathcal W}
\newcommand{\ra}{\rightarrow}
\title{Multidimensional persistence and noise}
\author[M. Scolamiero]{Martina Scolamiero}
\address{Martina Scolamiero, EPFL SV BMI UPHESS\\  MA B3 495 (Batiment MA) Station 8\\ CH-1015 Lausanne\\
Switzerland}
\email{martina.scolamiero@epfl.ch}
\author[W. Chach\'olski]{Wojciech Chach\'olski}
\address{Corresponding author: Wojciech Chach\'olski\\ Department of Mathematics\\ KTH Stockholm
Lindstedtsv\" agen 25 \\ 10044 Stockholm \\ Sweden}
\email{wojtek@math.kth.se}
\thanks{W. Chach\'olski is a corresponding author}
\thanks{W. Chach\'olski was partially supported by G\"oran Gustafsson Stiftelse and VR grants.}
\author[A. Lundman]{Anders Lundman}
\address{Anders Lundman\\ Department of Mathematics\\ KTH Stockholm
Lindstedtsv\" agen 25 \\ 10044 Stockholm \\ Sweden}
\email{alundman@kth.se}
\author[R. Ramanujam]{Ryan Ramanujam}
\address{Ryan Ramanujam, Department of Mathematics\\ KTH Stockholm
Lindstedtsv\" agen 25 \\ 10044 Stockholm \\ Sweden}
\email{Ryan.Ramanujam@ki.se}
\author[S. \"Oberg]{Sebastian \"Oberg}
\address{Sebastian \"Oberg, Department of Mathematics\\ KTH Stockholm
Lindstedtsv\" agen 25 \\ 10044 Stockholm \\ Sweden}
\email{sobe@kth.se}
\subjclass[2010]{Primary: 55, 18, 68}
\keywords{Multidimensional persistence, persistence modules, noise systems, stable invariants} 
\thanks{Communicated by Peter Olver}
\begin{document}

\maketitle 

\begin{abstract}
In this paper we study multidimensional persistence modules~\cite{Multi,Interleaving Multi}
 via what we call tame functors and noise systems.  A noise system leads to a
pseudo-metric topology on the category of tame functors. We show how this  pseudo-metric can be used to identify persistent features
of compact multidimensional persistence modules. To count such features we introduce the feature counting  invariant
and prove that assigning this invariant to compact tame functors is  a  $1$-Lipschitz operation.
For $1$-dimensional persistence, we explain how, by choosing an appropriate noise system, the feature counting invariant  identifies  the same persistent features as the classical barcode construction.
\end{abstract}

\section{Introduction}
The aim of this paper is to
 present a new  perspective on  \emph{multidimensional persistence} \cite{Multi} and  introduce a tool for creating numerous new invariants for  multidimensional persistence modules. This new tool
helps in extracting  information   by purposely  defining  what is not wanted.
We do that by introducing the concept of a noise system  and show how it leads to a continuous invariant.
For one-dimensional persistence~\cite{persistence} and an appropriate choice of a noise system
this invariant turns out to be closely related to  the well-studied  \emph{barcode}.
  The barcode in one-dimensional persistence has proven itself to be  a valuable tool for analysing data from a variety of   different research areas (see e.g. \cite{NaturalImages}, \cite{ViralEvolve}, \cite{2DRobots} and \cite{SignalAnalysis}).
 Multidimensional persistence however has not yet had as much  use in data analysis, even though its  potential is  even exceeding that of one-dimensional persistence.
  As an example whenever one has multiple measurements and wants to understand the relations between them this naturally translates  into a multidimensional persistent module. 
Furthermore when one studies a space using a sampling, i.e. point cloud data, multidimensional persistence  can provide additional insight into the geometrical  properties of the space.   
These types of applications can   be found for example in \cite{HepaticLesions}, where hepatic
 lesions were classified using multidimensional persistence, or in \cite{ImageRetrival}, where multidimensional persistence was used to help with content-based image retrieval. 

The pipeline for using multidimensional persistence  for data analysis starts with a  choice of multiple measurements on a data set.   These measurements are   used   (via for example the \v{C}ech or Vietoris-Rips constructions, see~\cite{Carlsson review}) to  get   a topological space or a sequence of such, resulting  in a  functor $X\colon  \QQ^r\to \text{Spaces}$ where $\QQ^r$ is the poset of $r$-tuples of non-negative rational numbers (see~\ref{point NRposets}) (we use rational  instead of real numbers in order to avoid certain technical difficulties). 
The aim is to gain new insight into the data by extracting homological  information out of these spaces.
Applying the $i$-th homology with coefficients in a field $K$ gives us  a functor $H_i(X,K)\colon  \QQ^r\to  \text{Vect}_K$  called an $r$-dimensional persistence module.  The functors obtained in this way are often  \emph{tame}
(see Definition~\ref{def tame}).  The  category  of tame functors $\text{\rm Tame}(\QQ^r,\text{\rm Vect}_K)$ has very similar properties to   the category of graded modules over the $r$-graded polynomial ring $K[x_1,\ldots,x_r]$. In the case  $r=1$  this translates into the barcode being  a complete discrete invariant for one-dimensional compact and tame persistence modules. For $r>1$, it is known that no such discrete invariant can exist, as in this case the moduli space of $r$-dimensional compact and tame persistence modules is a positive dimensional algebraic variety, (see~\cite{Multi}).  Furthermore, this variety is complicated enough that 
there is  simply no realistic hope  to  find easily visualisable and continuous  invariants completely describing their compact objects.
 In our opinion  however  looking for complete invariants of algebraic objects such as the multidimensional persistence modules is  not the main goal of topological data analysis.
For data analysis it is much more useful to be able to extract out of such modules their {\em continuous}  features.
That is why we propose that  instead of focusing on the objects in $\text{\rm Tame}(\QQ^r,\text{\rm Vect}_K)$ we study relations between them using topology  and metrics as  main tools.

By defining collections of tame functors that are $\epsilon$-small, for every non-negative rational number $\epsilon$, we create what we call a noise system in Definition~\ref{def noise}. These noise systems help us to tell the size of tame functors and thus also which of these functors we can disregard and consider as noise. This leads us to define a pseudo-metric on the category $\text{\rm Tame}(\QQ^r,\text{\rm Vect}_K)$ in Definition \ref{def distance tame} and hence also induce a topology on that category. Equipped with this topology we define invariants called feature counting functions in Section 9. These invariants are functions $\text{bar}(F)\colon\QQ \to \NN$ (with values in the set of natural numbers) which for a given functor $F$ and a positive rational number $\epsilon$ return the smallest rank of a functor in an $\epsilon$-neighbourhood of $F$. We then show in Proposition~\ref{prop:lipschitz} that the assignment $F\mapsto \text{bar}(F)$ is  not just continuous, but also 1-Lipschitz with respect to the topology we just introduced. A standard  way   of producing  invariants of multidimensional persistence modules   is a reduction to one-dimensional case
by restricting the modules   to one parameter submodules and then using one persistence. This is the key idea behind  invariants such as the rank invariant~\cite{Multi} and more generally multidimensional PBNs in~\cite{PBN}. By an appropriate choice of a noise system, these can be recovered as  feature counting functions. For arbitrary noises however, feature counting functions  provide a much wider set of stable invariants for multidimensional persistence modules, invariants that go beyond
the  reduction  to the one-dimensional case.


\medskip

\paragraph{\em Organisation of the Paper.} In Section $2$ we go through the notation and background needed for the paper. 
This continues in Section $3$ which contains  some background on functors indexed by $r$-tuples of the natural numbers. These results will be crucial in Section $4$ where we instead look at functors indexed by $r$-tuples of  the rational numbers and introduce the concept of tameness for functors. 
Although related, our notion of tameness for functors  is not exactly analogous to
the concept of tame functions described  in~\cite{stable_persistence}.
In Section $5$  we prove some fundamental  properties and show how to compute certain homological  invariants of tame and compact functors. 
Tame and compact functors are  our main object of study. Such functors are less general than $q$-tame persistence modules as defined in \cite{qtame} since compact functors have finite dimensional values. Nevertheless many applications, as for example the ones defined in \cite{Multi}, can be modelled through such objects.

Section $6$ contains the definition of a noise system and  several examples of different explicit noise systems. We explore this further in Section $7$ where we look at under which circumstances a noise system is closed under direct sums. Section $8$ then uses the notion of a noise system to define a pseudo-metric on tame functors inducing  a topology on such functors. This allows us to define  noise dependent invariants, called feature counting invariants in Section $9$, which we prove are 1-Lipschitz  with respect to the  pseudo-metric. In Section 10 we show that the feature counting invariants generalise the barcode from one-dimensional persistence. We provide a simplified description of the feature counting invariant for the standard noise  in the multidimensional case in Section $11$. This noise system is the most natural one with respect to what is typically considered to be noise in multipersistence via the interleaving distance of~\cite{Interleaving Multi}. In Section 12 we describe  the notion of denoising and how it (hopefully) can help with   
computing the feature counting invariants.  In section 13 we outline some possible directions for future results. Lastly in the Appendix (Section 14) we prove, for completeness, properties of the category of vector space valued functors indexed by $\N^r$ and construct minimal covers in this category.
\medskip

 We would like to thank Claudia Landi for inspiring discussions about stability.
 
\section{Notation and Background}
\begin{point}\label{point categories}
The symbols $\text{Sets}$ and  $\text{Vect}_K$  denote the categories of respectively sets and   $K$-vector spaces
where  $K$ is always assumed to be a field.  Given a $K$-vector space,  we denote its dimension by $\text{dim}_K V$.  The linear span functor, denoted by $K:\text{Sets}\ra \text{Vect}_K$, 
 assigns to a set $S$ the vector space $K(S):=\oplus_S K$ with base $S$ and to a function $f\colon S\to S'$ 
  the  homomorphism $K(f)\colon\oplus_S K\to \oplus_{S'} K$
 given by $f$ on the bases.
 \end{point}

 \begin{point}\label{point functors}
 Let $I$ be a small category and $\calc$ be a category. The symbol $\calc^{\text{op}}$ denotes the opposite category to $\calc$ and $\text{Fun}(I,\calc)$ denotes the category of functors indexed by $I$ with values  in  $\calc$ and natural transformations as morphisms (see \cite{Categories}).
We use the symbol
$\text{Nat}(F,G)$   to denote the set of natural transformations between two functors 
$F,G\colon I\to \calc$. If $\calc$ is  abelian, then so is  $\text{Fun}(I,\calc)$. A sequence of composable morphisms in  $\text{Fun}(I,\calc)$
is exact if and only if its values  at any object $i$ in $I$  form an exact sequence in $\calc$.
If $\calc$ has enough projective objects, then so does $\text{Fun}(I,\calc)$.
\end{point}

\begin{point}\label{pt multiset}
Let $X$ be a set. A {\bf multiset} on $X$ is a function $\beta\colon X\to \N$ of sets where $\N$ denotes the set of natural numbers.
A  multiset  $\beta$ is {\bf finite} if  $\beta(x)\not=0$ for only finitely many $x$ in $X$.  If $\beta$ is a finite multiset on $X$, then its {\bf size}  is defined as 
$\sum_{x\in X}\beta(x)$.
We say that $\beta\colon X\to \N$ is a {\bf subset}  of $\gamma\colon X\to \N$ if $\beta(x)\leq \gamma(x)$ for any $x$ in $X$.
\end{point}

  \begin{point}\label{point freefunctors}
%
Let $i$ be an object in a small category $I$. The symbol $K_{I}(i,-)\colon  I\to \text{Vect}_K$ denotes  the composition of the representable functor $\text{mor}_{I}(i,-)\colon I\to \text{Sets}$ with the
 linear span functor  $K\colon\text{Sets}\to \text{Vect}_K$. This functor is called {\bf free on one generator}. 
 We often omit the subscript $I$ and write $K(i,-)$.

Let  $\{V_i\}_{i\in I}$  be a sequence of $K$-vector spaces indexed by objects in $I$. Functors of the form 
$\oplus_{i\in I} K(i,-)\otimes V_i$ are called {\bf free}.
Two free functors $\oplus_{i\in I} K(i,-)\otimes V_i$ and $\oplus_{i\in I} K(i,-)\otimes W_i$  are isomorphic if and only if, for any $i$ in  $I$, the vector spaces $V_i$ and $W_i$ are isomorphic.
Let $F= \oplus_{i\in I} K(i,-)\otimes V_i$ be a free functor. The vector spaces  $V_i$ are called the {\bf components} of $F$.  If all component are finite dimensional, then  $F$   is called of {\bf finite type}. 
 The {\bf support} of a free functor $\oplus_{i\in I} K(i,-)\otimes V_i$  is the subset  of the set of objects  of $I$ consisting of those  $i$ in $I$  for which  $V_{i}\ne 0$.  A free functor 
is said to be of {\bf finite rank} if has finite support and is of finite type. If $F=\oplus_{i\in I} K(i,-)\otimes V_i$ is of finite rank, the number $\text{rank}(F):=\sum_{i\in I} \text{dim}_K V_i$ is called the {\bf rank} of F.  

Consider a  free functor $F=\oplus_{i\in I} K(i,-)\otimes V_i$ of finite type.  
The  $0$-{\bf Betti diagram} of  $F$ is  defined to be the multiset  on the set of objects of $I$ given by $\beta_0F(i):=\text{dim}_KV_i$.
The $0$-Betti diagram of a free finite type functor determines its isomorphism type.
Note that if $F$ is free and of finite rank, then the multiset $\beta_0F$ is  finite of size $\text{rank}(F)$.
\end{point}



\begin{point}\label{point minimal}
A morphism $\phi\colon X\to Y$ in a category $\calc$ is called {\bf minimal} if any morphism  $f\colon X\to X$ satisfying   $\phi=\phi f$ is an isomorphism. A natural transformation $\phi\colon F\to G$   in $\text{Fun}(I,\text{Vect}_K)$
is called a {\bf minimal cover}  of $G$, if $F$  is free and $\phi$ is both minimal and an epimorphism.
Minimal covers 
are unique up to isomorphism: if  $\phi\colon F\to G$ and  $\phi'\colon F'\to G$
are minimal covers  of $G$, then there is an isomorphism (non necessarily unique) $f\colon F\to F'$ such that
$\phi=\phi' f$. Furthermore any  $g\colon F\to F'$  for which $\phi=\phi' g$ is an isomorphism (minimality).

Consider a functor $G:I\rightarrow \text{Vect}_K$ that admits a     minimal cover $\phi \colon F\to G$.
If $F$ is of finite type, then we say that $G$ is of {\bf finite type}. 
If $F$ is of  finite rank, then we say that $G$  is of  {\bf finite rank}  and define   the {\bf rank} of $G$ to be the rank of the free functor $F$ and denote it by 
$\text{rank}(G)$.  
We define the {\bf support} of $G$ to be the support of $F$ and denote it by $\text{supp}(G)$.
Note that $G$ is of finite rank if and only if it has finite support and is of finite type.
If $G$ is of finite type,
we define the {\bf $0$-Betti diagram} of $G$ to be the multiset on the set of objects of $I$ given by
the $0$-Betti diagram of the free finite type functor $F$ (see~\ref{point freefunctors}) and denote it by $\beta_0 G$.
Being of finite type, of finite rank, and  the  invariants $\text{rank}(G)$, $\text{supp}(G)$, and $\beta_0 G$
do not depend on the choice of the minimal cover of $G$.

Consider a functor $G:I \to \text{Vect}_K$, recall that an element $g$ in $G(i)$ induces a unique natural transformation,
denoted by the same symbol  $g\colon K(i,-)\to G$, that maps the element   $\text{id}_i$ in $K\text{mor}_I(i,i)=K(i,i)$ to $g$ in $G(i)$.
A {\bf minimal set of generators} for $G$  is a sequence  of elements $\{g_1 \in G(i_1),\ldots, g_n\in G(i_n)\}$  such that the induced natural transformation $\oplus_{k=1}^n g_k\colon \oplus_{k=1}^n K(i_k,-)\to G$ is a minimal cover. A functor has a minimal set of generators if and only if it is of finite rank, in which case the number
of minimal generators is given by $\text{rank}(G)$. If $\{g_1 \in G(i_1),\ldots, g_n\in G(i_n)\}$  is a minimal set of generators of $G$, then the multiset $\beta_0G\colon I\to \N$ assigns to an object $i$ in $I$ the number of generators that belong to $G(i)$.
\end{point}

\begin{point}\label{point NRposets} Let $r$ be a positive natural number and  $v=(v_1,\ldots,v_r)$ and $w=(w_1,\ldots,w_r)$ be   $r$-tuples of {\em non negative} rational numbers. Define: 
\begin{itemize}
\item  $\|v-w\|:=\text{max}\{|v_i-w_i|\ |\ 1\leq i\leq r\}$,
\item $v\leq w$ if  $v_i\leq w'_i$ for any $i$,
\item  $v<w$ if $v\leq w$ and $v\not=w$. 
\end{itemize}
We call the number $||w||$ the {\bf norm} of $w$.
The relation $\leq$ is a partial order. We use the symbol  $\QQ^{r}$ to denote the  category  associated to this poset, i.e., the category whose objects are
$r$-tuples of non negative rational numbers   with the sets of morphisms  $\text{mor}_{\QQ^r}(v,w)$ being either empty
if $v\not\leq w$, or consisting of  only one element in the case $v\leq w$. 
Note that if  $v\leq w\leq u$ in $\QQ^r$, then $||v-w||\leq ||v-u||$. The full subcategory of $\QQ^{r}$  whose objects are $r$-tuples of natural numbers is denoted by $\N^r$. Both posets $\QQ^r$ and $\N^r$ are lattices.
This means that for any finite set of elements
$S$ in $\QQ^r$ (respectively $\N^r$), there are elements $\text{meet}(S)$ and $\text{join}(S)$ in $\QQ^r$ (respectively $\N^r$) 
with the following properties. First, for any $v$ in $S$, $\text{meet}(S)\leq v\leq \text{join}(S)$. Second, if $u$ and $w$ are elements in $\QQ^r$ (respectively $\N^r$) for which $u\leq v\leq w$, for any $ v$ in $S$, then $u\leq \text{meet}(S)$ and $\text{join}(S)\leq w$. Observe that the elements $\text{meet}(S)$  and  $\text{join}(S)$  may not   belong to $S$. 

Let $S$ be a subset in  $\QQ^r$ (respectively $\N^r$). An element $v$ in $S$ is called {\bf minimal} if, for any $w<v$, $w$ does not belong to $S$.  The set of minimal elements of any non-empty subset of $\N^r$ is never empty and is finite. Neither of these statements are true for $\QQ^r$.

The element in $\QQ^r$ whose  coordinates are all $0$ is called either the {\bf origin} or the {\bf zero} element and is denoted simply by $0$.
The element in $\QQ^r$ whose  coordinates are all $0$ except  the $i$-th one which is $1$ is called the  $i$-th {\bf standard vector} and denoted  by $e_i$. 
\end{point}

\begin{point}\label{pt cone}
The set of all linear combinations of 
elements  $g_1,\ldots, g_n$ in $\QQ^r$  with non-negative rational coefficients is called the {\bf cone generated} by $g_1,\ldots, g_n$ 
 and denoted by $\text{Cone}(g_1,g_2\ldots, g_n)$. 
 A {\bf cone} in $\QQ^r$ is by definition a subset of $\QQ^r$  of the form $\text{Cone}(g_1,g_2\ldots, g_n)$ for some
 non-empty sequence of elements $g_1,\ldots, g_n$ in $\QQ^r$. 
 A {\bf ray} is a cone in $\QQ^r$ generated by one non-zero element.
 \end{point}

\begin{point} \label{pt translation}
Let $w$ be in $\QQ^r$ and $\calc$ be a category. Consider
the functor $-+w\colon \QQ^r\to \QQ^r$ that maps $v\leq u$   to $v+w\leq u+w$. The composition of
  $-+w$  with a functor $F\colon \QQ^r\to \calc$ is called the $w$-{\bf translation} of $F$ and is denoted by
$F(-+w)\colon  \QQ^r\to \calc$.  Let $\tau$ be in $\QQ^r$. Two functors $F,G$ in $\text{Fun}(\QQ^r,\calc)$ are 
$\tau$-{\bf interleaved} if there exist natural transformations $\phi:F \rightarrow G(-+\tau)$ and 
$\psi: G \rightarrow F(-+\tau)$ such that $\psi_{v+\tau} \circ \phi_{v} = F(v\leq v+2\tau)$ and 
$\phi_{v+\tau} \circ \psi_{v} = G(v\leq v+2\tau)$ for any $v$ in $\QQ^r$.
This definition follows the definition of interleaving given in \cite{generlized_interleavings} for functors indexed by a preordered set.
\end{point}

\begin{point}\label{pt metric}
The symbol ${\mathbf R}_{\infty}$  denotes the poset whose underlying set is the disjoint union of the set of non-negative real numbers and the singleton $\{\infty\}$.  The order on  ${\mathbf R}_{\infty}$ is given by the usual being smaller or equal relation for non-negative real numbers and assuming that $x\leq \infty$ for any $x$ in  ${\mathbf R}_{\infty}$.

An {\bf extended pseudometric}  on a set $X$ is a function $d\colon X\times X\to {\mathbf R}_{\infty}$
such that:
\begin{enumerate}
\item for any $x$ and $y$  in $X$,  $d(y,x)=d(x,y)$; 
\item  for any $x$ in $X$, $d(x,x)=0$;
\item  for any $x$, $y$, and $z$ in $X$, $d(x,z)\leq d(x,y)+d(y,z)$.  
 \end{enumerate}
A set  equipped with an extended pseudometric  is called 
an extended pseudometric space.

Let $d$ be an extended pseudometric  on a set $X$. For any positive real number $t$ and any $x$ in $X$,
$B(x,t)$ denotes the subset of $X$ consisting of these elements $y$ for which $d(x,y)<t$. This subset is called
the open ball around $x$ with radius $t$. These sets form a base of a topology on $X$.  

Let $k$ be a positive real number.
Given two extended pseudometric spaces $(X, d_X)$ and $(Y, d_Y)$, a function $f : X\rightarrow Y$ is called 
$k$-Lipschitz if  $d_Y(f(x_1), f(x_2)) \leq k d_X(x_1, x_2)$ for any
$x_1,\,x_2$ in $X$.
 \end{point}
 
\section{Functors indexed by $\N^{r}$}\label{sec:funcindbyN}
In this section we recall how to determine that  a functor $F\colon \N^{r}\to \text{Vect}_K$
is of finite rank and how to, for a such a  functor,   compute its  support, rank and  $0$-Betti diagram. 
 The idea of using the $0$-Betti diagram as an informative invariant in the context of multidimensional persistence was first introduced in \cite{Multi}.

We will also recall the classification of finite rank functors in the case $r=1$.  These are
standard results as the category $\text{Fun}(\N^{r},\text{Vect}_K)$ is equivalent to the category of $r$-graded modules over the  polynomial ring in $r$ variables  with the standard $r$-grading  (see  \cite{Multi}).
In the appendix  we present   a classical way of analysing 
  basic   properties of the category  $\text{Fun}(\N^{r},\text{Vect}_K)$, where we in particular  identify its compact and  projective objects, and discuss minimal covers.  We do that for self containment of the paper and
  to illustrate that this material including  all the proofs  and the   classification of compact functors for $r=1$ can be presented on less than 4 pages.
  
 The  radical of a functor $G\colon \N^{r}\to \text{Vect}_K$ is  the key tool to determine its rank, support, and 
  $0$-Betti diagram. Recall that the  radical of $G$  is a subfunctor $\text{rad}(G)\subset G$ 
 whose value $\text{rad}(G)(v)$ is  the subspace of $G(v)$ given by the sum
of all the images of $G(u< v)\colon G(u)\to G(v)$ for all $u< v$ (see~\ref{pt radical}).  
The quotient $G/\text{rad}(G)$
is isomorphic to a functor of the form  $\oplus_{v\in \N^r} (U_v\otimes V_v)$,
where $\{V_v\}_{v\in \N^r}$ is a sequence of vector spaces and 
$U_v\colon \N^{r}\to \text{Vect}_K$ is  the unique functor such that $U_v(v)=K$ and $U_v(w)=0$ if $w\not =v$
(see~\ref{pt semisimpl}).  We can now state (for the proofs see~\ref{prop fdimvalues} and~\ref{pt invsemisimple}):
\begin{itemize}
\item Any  functor in  $\text{Fun}(\N^{r},\text{Vect}_K)$ admits a minimal cover.
\item $G$ is of finite type if $V_v$ is finite dimensional for any $v$.
\item $\text{supp}(G)=\{v\in \N^{r}\ | V_v\not=0\}$.
\item $G$ is of finite rank if and only if $\{v\in \N^{r}\ | V_v\not=0\}$ is a finite set and $V_v$ is finite dimensional for any $v$,
in which case $\text{rank}(G):=\sum_{v\in \N^{r}} \text{dim}_K V_v$.
\item $\beta_0G(v)=\text{dim}_K V_v $.
\end{itemize}

Let $w\leq u$ be in $\N$. Recall that the bar  starting in $w$ and ending in $u$ is a functor 
$[w,u)\colon \N\to \text{Vect}_K$ given by the cokernel of the unique inclusion $K(u,-)\subset K(w,-)$ (see~\ref{pt compactr=1}). The classification of finite rank functors in $\text{Fun}(\N,\text{Vect}_K)$ states (~\ref{prop classification}):
 \begin{itemize}
\item Any functor of finite rank $F\colon\N\to \text{Vect}_K$ is isomorphic to a finite direct sum of functors of the form $[w,u)$ and $K(v,-)$. 
Moreover the isomorphism types of these   summands are uniquely determined by the isomorphism type of  $F$.
 \end{itemize}
 The above theorem, also known as the structure theorem for finitely generated graded modules over PID's, allows us to decompose any functor of finite rank $F\colon\N\to \text{Vect}_K$ as a direct sum of bars  and free functors. Such decomposition in persistent homology is commonly visualised through a barcode where each bar represents an indecomposable summand (see~\cite{persistence}).

  \section{Tameness}\label{sec tamness}

In this section we introduce the category of tame functors indexed by $\QQ^r$. Intuitively, a tame functor is an extension of a functor indexed by $\N^r$ to a functor indexed by $\QQ^r$, which is constant on regions we call fundamental domains.
In the following we will be particularly interested in tame and compact functors with values in $\text{Vect}_K$ (for compactness see \ref{pt compact}).
On one side,   computing homological invariants of such functors, can be recasted to computing analogous  invariants of  functors indexed by $\N^r$. On the other,  the indexing category $\QQ^r$ offers new ways of comparing and measuring distances between tame functors.  
One could possibly use real number and define tame functors to be indexed by ${\mathbf R}^r$. For  technical reasons however we decided to use the  rational numbers $\QQ$.  This in our view is not a restrictive choice, as many functors are coming from  data sets which are obtained by  incremental 
and discrete measurements, for instance see the examples presented in \cite{Multi}.

\begin{point}{\bf Fundamental domain.}\label{pt fund dom}
If a group acts on a topological space $X$, a connected subspace of $X$ which contains exactly one  representative of each orbit  is called a fundamental domain. For example  the product of half open intervals $[0,1)^r$ is an example of a fundamental domain of 
 the action of $\ZZ^r$ on  $\mathbf{R}^r$ by translations. For studying multidimensional persistence we would like to replace the group  with the monoid $\NN^r$ acting by various  translations on $\QQ^r$.
 In this article we are interested in   actions  given by the following different embeddings of  $\NN^r$ in  $\QQ^r$.
Let $\alpha$ be a positive rational number and let the same symbol  $\alpha\colon {\N}^r\to \QQ^r$
denote  the unique functor that maps an object $w$ in  ${\N}^r$ to $\alpha w$ (the multiplication of all the coordinates of $w$ by $\alpha$) in $\QQ^r$.
Then for  $v$ in $\QQ^r$, consider the following finite subset
$B_{\alpha}v:=\{ w\in \N^r \ |\  \alpha w\leq v\}$  of $\N^r$ and define $b_\alpha v:=\text{join}\, B_{\alpha}(v)$ (see~\ref{point NRposets}).  
For any $w$ in $\N^r$ we call the subset $D_{\alpha}w=\{v\in \QQ^r\ |\ b_\alpha v=w\}\subset \QQ^r$ the {\bf fundamental domain} of ${\alpha}$ at $w$.
\end{point}

\begin{example}\label{ex:fund dom}
Let $\alpha=1$, so that $\alpha \colon \NN^2 \to \QQ^2$ is the standard inclusion $x\mapsto x$. Then for any ${v\in [2,3)\times[2,3)\subset\QQ^2}$, we have that $B_\alpha v=\{w\in \NN^2 \mid w\le v\}=\{(x,y)\in \NN^2 \mid x,y\le 2\}$ and $b_\alpha v=(2,2)$. Conversely the fundamental domain (of $1$) at $(2,2)$ is $D_1(2,2)=[2,3)\times [2,3)\subset \QQ^2$. This is indiacated in Figure~\ref{fig:funddom1}.


\vspace{10pt}
\begin{minipage}{\textwidth}
\begin{center}
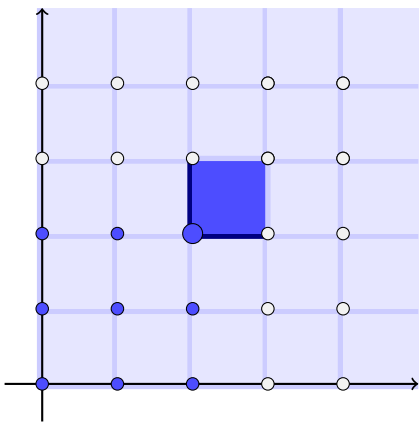

\begin{tikzpicture}[scale=1]

\foreach \i in {0,...,4}{
\foreach \j in {0,...,4}{
\draw[blue!20!white, line width=4pt] (\i+1,\j)--(\i,\j)--(\i,\j+1);
\draw [draw=none,fill=blue!10!white] (\i,\j)--(\i+1,\j)--(\i+1,\j+1)--(\i,\j+1)--(\i,\j);
}}

\draw[blue!50!black,line width=4pt] (3,2)--(2,2)--(2,3);
\draw [draw=none,fill=blue!70!white] (2,2)--(3,2)--(3,3)--(2,3)--(2,2);
\draw[blue!20!white, line width=2pt] (2,3)--(3,3)--(3,2);
\draw[thick,->] (-0.5,0)--(5,0);
\draw[thick,->] (0,-0.5)--(0,5);
\foreach \i in {0,...,4}{
\foreach \j in {3,4}{
\node[fill=black!5!white, shape=circle, scale=0.5,draw=black] at (\j,\i) {};
\node[fill=black!5!white, shape=circle, scale=0.5,draw=black] at (\i,\j) {};
}}
\foreach \x in {0,...,2}{
	\foreach \y in {0,...,2}{
		\node[fill=blue!70!white, shape=circle, scale=0.5,draw=black] at (\x,\y) {};}}
\node [fill=blue!70!white, shape=circle, scale=0.8, draw=black] at (2,2) {};

\end{tikzpicture}
\captionof{figure}{$D_1(2,2)$, $B_1 v$ and $b_1v$ for $v\in D_1(2,2)$}\label{fig:funddom1}
\end{center}
\end{minipage}
\end{example}
%
%

Note that we have the following properties:
%
\begin{enumerate}
\item the fundamental domain $D_{\alpha}w$ consists of  elements $v$ in $\QQ^r$ such that $\alpha w\leq v$ and $||v-\alpha w||<\alpha$ i.e. all the coordinates of $v-\alpha w$ are non-negative and are strictly smaller than $\alpha$;
\item for any $v$ in $\QQ^r$, $\alpha b_{\alpha} v\leq v$ and $||v-\alpha b_{\alpha}v||<\alpha$;
\item  for any  $w$  in $\N^r$,  $w=b_{\alpha}\alpha w$;
\item if $v\leq u$ in $\QQ^r$, then $B_{\alpha}v\subset B_{\alpha}u$ and hence
$b_{\alpha}v\leq b_{\alpha}u$ in $\N^r$.
\end{enumerate}

We use these properties to extend functors indexed by $\NN^r$ to functors indexed by $\QQ^r$ along the embedding $\alpha\colon \NN^r\to \QQ^r$.  For $F\colon\NN^r \to \calc$, define a new functor $\alpha^! F\colon \QQ^r\to \calc$, by letting $\alpha^!F(v):=F(b_\alpha v)$ for $v$ in $\QQ^r$ and setting   ${\alpha}^{!}F(v\leq u)\colon {\alpha}^{!}F(v)\to {\alpha}^{!}F(u)$ to be  
$F(b_\alpha v\leq b_\alpha u)$. By construction this functor is constant on all the fundamental domains, i.e.,
$F(b_{\alpha}v\leq v)$ is the identity for any $v$ in $\QQ^r$.

\begin{example}
Consider the semi-simple functor $U_{(2,2)}\colon \NN^2 \to \Vect_K$ (see \ref{pt semisimpl}). By definition the value of  $1^! U_{(2,2)}\colon \QQ^2\to \Vect_K$ is $K$ on the fundamental domain $D_1(2,2)=[2,3)\times [2,3)$ and zero otherwise. Furthermore 
$1^! U_{(2,2)}(v\le u)$ is the identity morphism if $b_1 v=b_1 u=(2,2)$ and the zero morphism otherwise. 
\end{example}

We claim that the construction $(F\colon {\N}^r\to \calc)\mapsto  ({\alpha}^{!}F\colon \QQ^r\to \calc)$ is natural.  To see this let the restriction functor along $\alpha$  be denoted by  
${\alpha}^{\ast}\colon \text{Fun}(\QQ^r,\calc)\to  \text{Fun}({\N}^r,\calc)$. It assigns to a functor $F\colon\QQ^r\to \calc$ the composition $F\alpha\colon\N^r\to \calc$. 
Note that, for any natural transformation $\psi\colon F\to G$ in
$ \text{Fun}({\N}^r,\calc)$, there is a unique natural transformation  ${\alpha}^{!}\psi\colon
{\alpha}^{!}F\to {\alpha}^{!}G$ for which ${\alpha}^{\ast}{\alpha}^{!}\psi=\psi$ (as ${\alpha}^{!}F$ is constant on the fundamental domains).  Because of this uniqueness it is clear that $\alpha^{!}(\psi\phi)=
(\alpha^{!}\psi)(\alpha^{!}\phi)$ and so $\alpha^{!}$ is a functor.

Let $F\colon\NN^r\to \calc$ be a functor.
Since $\alpha^{!}F$ is constant on fundamental domains, a natural transformation
$\psi\colon \alpha^{!}F\to G$ into any functor $G\colon\QQ^r\to \calc$, is uniquely determined by
its restriction $\alpha^{\ast}\psi\colon F=\alpha^{\ast}\alpha^{!}F\to \alpha^{\ast}G$. This gives a bijection between $\text{Nat}_{\QQ^r}(\alpha^{!}F,G)$ and $\text{Nat}_{\NN^r}(F,\alpha^{\ast}G)$.
In categorical terms this means that $\alpha^{!}\colon \text{Fun}({\N}^r,\calc)\to \text{Fun}(\QQ^r,\calc)$ is left adjoint to the restriction functor ${\alpha}^{\ast}\colon \text{Fun}(\QQ^r,\calc)\to  \text{Fun}({\N}^r,\calc)$. Recall that such left adjoints are also called left Kan extensions (see~\cite{Categories}).
In particular, since ${\alpha}^{\ast} {\alpha}^{!}G=G$ for any $G\colon\N^r\to\calc$, the 
  function  
$(\psi\colon F\to G)\mapsto ({\alpha}^!\psi\colon {\alpha}^!F\to {\alpha}^!G)$ is a bijection between $\text{\rm Nat}(F,G)$ and  $\text{\rm Nat}( {\alpha}^!F,  {\alpha}^!G)$.

 Let 
$G\colon \QQ^r\to \text{Vect}_K$ be a functor. The  natural transformation  adjoint to the identity $\text{id}\colon {\alpha}^{\ast}G\to {\alpha}^{\ast}G$ is denoted by $\omega\colon {\alpha}^{!}{\alpha}^{\ast}G\to G$.
Explicitly, for $v$ in $\QQ^r$,  the morphism $\omega_v\colon {\alpha}^{!}{\alpha}^{\ast}G(v)=G(\alpha b_{\alpha}v)\to G(v)$ is given by  $G(\alpha b_{\alpha}v\leq v)$.

\begin{Def}\label{def tame}
Let $\alpha$ be in $\QQ$. A functor $G\colon\QQ^r\to \calc$ is called {\bf $\alpha$-tame} if it is isomorphic to a functor of the form $\alpha^!F$ for some $F\colon \N^r \to \calc$. A functor is {\bf tame} if it is $\alpha$-tame for some $\alpha \in \QQ$.

We will  use the symbol  $\text{\rm Tame}(\QQ^r,\calc)$ to denote the full subcategory of $\text{\rm Fun}(\QQ^r,\calc)$ whose objects are tame functors.
\end{Def}

Note $G\colon \QQ^r\to \calc$ is $\alpha$-tame  if and only if  
$\omega\colon {\alpha}^{!}{\alpha}^{\ast}G\to G$ is an isomorphism, i.e., if 
 $G(\alpha b_{\alpha}v\leq v)$ is an isomorphism for any $v$ in $\QQ^r$. Furthermore $G\colon \QQ^r\to \calc$ is $\alpha$-tame if and only if $G(\alpha w\leq v)$   is an isomorphism for any $w$ in $\N^r$ and $v$ in $\QQ^r$ such that $\alpha w\leq v$ and $||v-\alpha w||<\alpha$. In other words, tame functors are exactly the functors that are constant on fundamental domains.

 Note that the following diagram commutes for any positive rational $\alpha$:
 \[\xymatrix{
 {\N}^r\rto^-{n} \ar@/_15pt/[rr]|{{\alpha}}&  {\N}^r\rto^-{{\alpha/n}} & \QQ^r
 }\] 
 Thus the functors  ${\alpha}^! F$ and $({\alpha/n})^! n^! F$ are naturally isomorphic proving:
 \begin{prop}\label{prop divtame}
 If $G\colon \QQ^r\to {\mathcal C}$ is $\alpha$-tame, then it is also $\alpha/n$-tame for any positive natural number $n$.
 \end{prop}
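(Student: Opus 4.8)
The plan is to exploit the commutative triangle displayed just above the statement, which factors the embedding $\alpha\colon\N^r\to\QQ^r$ as $\alpha=(\alpha/n)\circ n$, where $n\colon\N^r\to\N^r$ is multiplication by $n$ and $\alpha/n\colon\N^r\to\QQ^r$ is the embedding associated (in the sense of~\ref{pt fund dom}) to the positive rational number $\alpha/n$. The key observation is that left Kan extension (the functor $(-)^!$) is compatible with composition of the indexing functors, so that for any $F\colon\N^r\to\calc$ there is a natural isomorphism $\alpha^!F\cong(\alpha/n)^!\,n^!F$.

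First I would make precise this compatibility. One way is to invoke uniqueness of adjoints: $\alpha^!$ is left adjoint to $\alpha^*$, and $\alpha^*=n^*\circ(\alpha/n)^*$ since the triangle commutes, so $\alpha^!$ is left adjoint to $n^*(\alpha/n)^*$, which is also the composite left adjoint $(\alpha/n)^!\circ n^!$; hence the two are naturally isomorphic. Alternatively — and perhaps more in the explicit spirit of the section — one checks directly that $n^!F$ is constant on the fundamental domains of $\alpha/n$ at the points $nw$, $w\in\N^r$, and that $b_{\alpha}v = n\cdot b_{\alpha/n}v$ for every $v$ in $\QQ^r$ (this is immediate from the definitions $B_\alpha v=\{w\mid \alpha w\le v\}$ and $B_{\alpha/n}v=\{w\mid (\alpha/n)w\le v\}$ together with $\mathrm{join}(nS)=n\,\mathrm{join}(S)$), from which $(\alpha/n)^!n^!F(v)=n^!F(b_{\alpha/n}v)=F(n\,b_{\alpha/n}v)=F(b_\alpha v)=\alpha^!F(v)$, and the same identification of indices makes the morphisms agree.

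Having established $\alpha^!F\cong(\alpha/n)^!(n^!F)$ naturally, the proposition follows immediately: if $G$ is $\alpha$-tame, then by Definition~\ref{def tame} there is $F\colon\N^r\to\calc$ with $G\cong\alpha^!F$; setting $F':=n^!F\colon\N^r\to\calc$ we get $G\cong(\alpha/n)^!F'$, so $G$ is $\alpha/n$-tame.

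I expect no serious obstacle here — the statement is essentially a bookkeeping consequence of the commuting triangle. The only point requiring a little care is verifying the index identity $b_\alpha v=n\,b_{\alpha/n}v$ (or, equivalently, that $n^!F$ really is constant on the $\alpha/n$-fundamental domains it needs to be constant on, which is the hypothesis needed to apply the uniqueness-of-adjoint argument cleanly); this is a routine check using properties (1)--(4) listed after Example~\ref{ex:fund dom} and the fact that scaling by $n$ commutes with $\mathrm{join}$. Since the excerpt already asserts that the triangle commutes and that "the functors $\alpha^!F$ and $(\alpha/n)^!n^!F$ are naturally isomorphic", the proof can be kept to one or two lines invoking exactly that.
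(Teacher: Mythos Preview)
Your main argument via uniqueness of adjoints is correct and is exactly the paper's approach: the commuting triangle gives $\alpha^{\ast}=n^{\ast}(\alpha/n)^{\ast}$, hence $\alpha^{!}\cong(\alpha/n)^{!}\,n^{!}$, and the conclusion follows immediately.

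Your alternative ``direct'' computation, however, contains an error. The identity $b_{\alpha}v = n\cdot b_{\alpha/n}v$ is false: for $r=1$, $\alpha=1$, $n=2$, $v=1$ one has $b_{1}(1)=1$ while $2\,b_{1/2}(1)=2\cdot 2=4$. The problem is the step $n^{!}F(u)=F(nu)$, which confuses $n^{!}$ with $n^{\ast}$. The left Kan extension along $n\colon\N^r\to\N^r$ is given (by the same fundamental-domain reasoning as in~\ref{pt fund dom}) by $n^{!}F(u)=F(b_n u)$ where $b_n u$ is the coordinate-wise floor of $u/n$. The correct index identity is therefore $b_n(b_{\alpha/n}v)=b_{\alpha}v$, which is just the nested-floor identity $\lfloor \lfloor x\rfloor /n\rfloor=\lfloor x/n\rfloor$ applied coordinate-wise. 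Since this alternative was offered only as a supplementary verification and your adjoint argument is sound, the overall proof stands.
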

 
 Another operation on functors that preserve tameness is translation. Let $w$ be in $\QQ^r$. Recall that 
  the $w$-translation of $F\colon \QQ^r\to \calc$, denoted by $F(-+w)$,  is the composition of $F$ with 
the functor $-+w\colon \QQ^r\to \QQ^r$ that maps $v\leq u$   to $v+w\leq u+w$ (see~\ref{pt translation}).

\begin{prop}
If $F\colon \QQ^r\to \calc$ is tame, then so is $F(-+w)$ for any $w$ in  $\QQ^r$.
\end{prop}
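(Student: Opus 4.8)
The plan is to reduce to the case where the translation vector $w$ is an integer multiple of the scaling parameter, and then invoke the pointwise characterization of tameness recorded right after Definition~\ref{def tame}. Suppose $F$ is $\alpha$-tame. The first step is to pass, via Proposition~\ref{prop divtame}, to a finer parameter $\beta = \alpha/n$ chosen so that $w$ becomes a natural-number multiple of $\beta$ in every coordinate. Writing $\alpha = a/b$ and $w_i = c_i/d_i$ with $a,b,c_i,d_i\in\N$ and $b,d_i$ positive, one checks that the choice $n := a\, d_1\cdots d_r$ makes $w_i/\beta = w_i n/\alpha = c_i\, b\prod_{j\neq i} d_j$ a natural number for each $i$. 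So, with this $\beta$, the functor $F$ is still $\beta$-tame (Proposition~\ref{prop divtame}) and $u := w/\beta$ is a well-defined element of $\N^r$ with $w = \beta u$.

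Next I would verify directly that $F(-+w)$ is $\beta$-tame, using the criterion that a functor $G\colon\QQ^r\to\calc$ is $\beta$-tame precisely when $G(\beta z \leq y)$ is an isomorphism for every $z \in \N^r$ and $y \in \QQ^r$ with $\beta z \leq y$ and $\|y - \beta z\| < \beta$. Given such $z$ and $y$, the morphism $F(-+w)(\beta z \leq y)$ equals $F(\beta z + w \leq y + w) = F\big(\beta(z+u) \leq y + w\big)$. Here $z + u \in \N^r$, the inequality $\beta(z+u) = \beta z + w \leq y + w$ holds, and $\|(y+w) - \beta(z+u)\| = \|y - \beta z\| < \beta$ because translating both arguments does not change $\|\cdot\|$. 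Hence this morphism is an isomorphism by the $\beta$-tameness of $F$, and therefore $F(-+w)$ is $\beta$-tame, in particular tame.

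I do not expect a genuine obstacle here; the only point requiring a moment's care is the arithmetic in the first step, namely ensuring that a single common refinement $\beta = \alpha/n$ of $\alpha$ simultaneously clears the denominators of all $r$ coordinates of $w$. (Alternatively one could avoid the pointwise criterion entirely and exhibit an isomorphism $F(-+w) \cong \beta^!\big((n^!F_0)(-+u)\big)$, where $F \cong \alpha^! F_0$, by verifying the identity $b_\beta(v + \beta u) = b_\beta v + u$ for all $v \in \QQ^r$; but the direct check above is shorter and uses only the characterization already in hand.)
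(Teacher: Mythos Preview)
Your proof is correct and follows essentially the same route as the paper's: refine $\alpha$ to some $\beta=\alpha/n$ so that $w=\beta u$ with $u\in\N^r$, then use the $\beta$-tameness of $F$ (via Proposition~\ref{prop divtame}) together with the fact that translation by $\beta u$ is compatible with the fundamental-domain structure. The paper phrases the final step via the identity $b_\beta(v+w)=b_\beta v+u$ and the characterization $G(\beta b_\beta v\leq v)$ being an isomorphism, while you use the equivalent criterion $G(\beta z\leq y)$ being an isomorphism for $z\in\N^r$, $\|y-\beta z\|<\beta$; both characterizations appear right after Definition~\ref{def tame}, so this is a cosmetic difference only.
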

\begin{proof}
Assume  $F$  is $\alpha$-tame. Consider $w=(w_1,\ldots, w_r)$ in $\QQ^r$.
Since $\alpha$ and the coordinates of $w$ are rational, there are natural numbers $m$ and $n_1,\ldots, n_r$ such that $\alpha/m=w_1/n_1=\cdots w_r/n_r=\mu$.  We claim that $F(-+w)$ is $\mu$-tame. For that we need to show that,  for any $v$ in $\QQ^r$, 
$F( \mu b_{\mu}v+w\leq v+w)$ is an isomorphism. Set $u:=(n_1,\ldots, n_r)$ in $\N^r$.
Note  that $w=\mu u$, and hence $b_{\mu}(v+w)=b_{\mu}v + u$, which implies
$\mu b_{\mu}(v+w)=\mu b_{\mu}v +w$.  The morphism $F( \mu b_{\mu}v+w\leq v+w)$
is then an isomorphism since $F$ is also $\mu$-tame (see~\ref{prop divtame}).
\end{proof}

\section{The category $\text{\rm Tame}(\QQ^r,\text{\rm Vect}_K)$}\label{sec tamevector}
In this section we describe basic properties of the category $\text{\rm Tame}(\QQ^r,\text{\rm Vect}_K)$ including
invariants called the $0$-Betti diagrams. For that we need  to discuss  the restriction $\alpha^{\ast}$ and the Kan extension $\alpha^!$ for functors with values in $\text{\rm Vect}_K$ (similar properties hold for functors  with values in any abelian category).   Note  that  Proposition~\ref{prop kercokertoam} and Corollary~\ref{cor tamnessprop} are  false if $\text{\rm Tame}(\QQ^r,\text{\rm Vect}_K)$ is replaced by $\text{\rm Tame}(\RR^r,\text{\rm Vect}_K)$.
\begin{prop}\label{prop basickan}
Let $\alpha$    be a positive rational number.
\begin{enumerate}
\item  
The left  Kan extension
${\alpha}^!K(v,-)\colon \QQ^r\to\text{\rm Vect}_K$ of  $K(v,-)\colon \N^r\to\text{\rm Vect}_K$  is  isomorphic to $K(\alpha v,-)$ and hence is free.
\item The  restriction of  $K(v,-)\colon \QQ^r\to\text{\rm Vect}_K$ along $\alpha\colon\N^r\to\QQ^r$
is also free and isomorphic to $K(\rm{meet} \{w\in \N^r\ |\ v\leq \alpha w\},-)$. 
 \item Both functors ${\alpha}^{\ast}\!: \text{\rm Fun}(\QQ^r,\text{\rm Vect}_K)\rightleftarrows  \text{\rm Fun}({\N}^r,\text{\rm Vect}_K):\! {\alpha}^!$  commute with  arbitrary colimits and in particular with direct sums.
 \item  A sequence of morphisms in  $\text{\rm Fun}({\N}^r,\text{\rm Vect}_K)$ is exact if and only if 
 ${\alpha}^!$ transforms it into an exact  sequence  in $\text{\rm Fun}(\QQ^r,\text{\rm Vect}_K)$.
 \item   If a sequence of morphisms in  $\text{\rm Fun}(\QQ^r,\text{\rm Vect}_K)$ is exact, then so is
 its restriction via ${\alpha}$ in $\text{\rm Fun}({\N}^r,\text{\rm Vect}_K)$.
\item A functor $F\colon \N^r\to \text{\rm Vect}_K$ is free if and only if $\alpha^!F \colon \QQ^r\to  \text{\rm Vect}_K$ is free.
\item If  $\phi \colon F\to G $ is  a minimal cover in $\text{\rm Fun}( \N^r , \text{\rm Vect}_K)$, then  $\alpha^! \phi \colon \alpha^! F\to \alpha^! G $ is a minimal cover  in $\text{\rm Fun}(\QQ^r,\text{\rm Vect}_K).$
\item If $F$ in $\text{\rm Fun}(\N^r,\text{\rm Vect}_K)$ is compact (see~\ref{pt compact}), then so is the functor $\alpha^! F$ in
$\text{\rm Fun}(\QQ^r,\text{\rm Vect}_K)$.
\item Let  $F\colon \QQ^r\to \text{\rm Vect}_K$ be $\alpha$-tame.
Then $F$ is compact in  $ \text{\rm Fun}(\QQ^r,\text{\rm Vect}_K)$ if and only if
$\alpha^{\ast}F$ is compact in $\text{\rm Fun}(\N^r,\text{\rm Vect}_K)$.
\item Let  $0\to F\to G\to H\to 0$ be   an exact sequence of tame functors in $ \text{\rm Fun}(\QQ^r,\text{\rm Vect}_K)$. Then $G$ is compact if and only if $F$ and $H$ are compact.  
\end{enumerate}
\end{prop}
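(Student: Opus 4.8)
The plan is to transport the assertion to the category $\text{\rm Fun}(\N^r,\text{\rm Vect}_K)$, where the structure theory recalled in Section~\ref{sec:funcindbyN} (and developed in the appendix) is available, and to use parts (5) and (9) of the present proposition to move back and forth. First I would replace the three tameness parameters by a single common one. By hypothesis $F$, $G$ and $H$ are $\alpha_F$-, $\alpha_G$- and $\alpha_H$-tame for positive rationals $\alpha_F,\alpha_G,\alpha_H$; writing these over a common denominator $q$, say $\alpha_F=p_F/q$, $\alpha_G=p_G/q$, $\alpha_H=p_H/q$ with $p_F,p_G,p_H,q$ positive integers, each of them is a positive integer multiple of $\alpha:=1/q$, so by Proposition~\ref{prop divtame} all three of $F$, $G$, $H$ are $\alpha$-tame. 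Applying the restriction functor $\alpha^{\ast}$ to the given exact sequence and using part (5) of this proposition, one obtains an exact sequence
\[0\to \alpha^{\ast}F\to \alpha^{\ast}G\to \alpha^{\ast}H\to 0\]
in $\text{\rm Fun}(\N^r,\text{\rm Vect}_K)$. By part (9), each of $F$, $G$, $H$ is compact in $\text{\rm Fun}(\QQ^r,\text{\rm Vect}_K)$ if and only if its $\alpha^{\ast}$-restriction is compact in $\text{\rm Fun}(\N^r,\text{\rm Vect}_K)$. Hence the statement reduces to the analogous assertion for the above sequence in $\text{\rm Fun}(\N^r,\text{\rm Vect}_K)$.

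For the $\N^r$-statement, recall that a functor in $\text{\rm Fun}(\N^r,\text{\rm Vect}_K)$ is compact precisely when it is of finite rank, equivalently when it admits a finite minimal set of generators (see~\ref{point minimal} and Section~\ref{sec:funcindbyN}), and that this category is equivalent to the category of $\N^r$-graded modules over $K[x_1,\dots,x_r]$. Given an exact sequence $0\to F'\to G'\to H'\to 0$ there, I would argue as follows. If $F'$ and $H'$ are compact, pick finite minimal generating sets $\{h_k\in H'(i_k)\}_{k=1}^m$ and $\{f_l\in F'(j_l)\}_{l=1}^n$, lift each $h_k$ along the epimorphism $G'\to H'$ to some $g_k\in G'(i_k)$, and regard each $f_l$ as an element of $G'$ via the monomorphism $F'\hookrightarrow G'$; a routine element chase at each object $v$ (subtract from $x\in G'(v)$ a suitable combination of pushforwards of the $g_k$ to land in $F'(v)$, then use the $f_l$) shows $\{g_k\}\cup\{f_l\}$ generates $G'$, so $G'$ is compact. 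Conversely, if $G'$ is compact then the images of a finite generating set of $G'$ generate the quotient $H'$, so $H'$ is compact; and $F'$ is a subfunctor of the finitely generated functor $G'$ over the Noetherian ring $K[x_1,\dots,x_r]$, hence itself finitely generated, hence compact.

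The only non-formal ingredient — and thus the main point — is this last one: in $\text{\rm Fun}(\N^r,\text{\rm Vect}_K)$ a subfunctor of a compact functor is again compact, i.e. the category is locally Noetherian (Hilbert's basis theorem for $K[x_1,\dots,x_r]$); this is classical and is also covered in the appendix. Everything else (closure of compact functors under quotients and extensions, and the adjunction/exactness properties of $\alpha^{\ast}$ and $\alpha^{!}$) is elementary or already recorded in parts (1)--(9). It is worth stressing that the tameness hypothesis on all three terms is what makes the reduction legitimate: $\text{\rm Fun}(\QQ^r,\text{\rm Vect}_K)$ is not Noetherian and a subfunctor of a tame $\QQ^r$-functor need not be tame, so one genuinely needs $F$, $G$ and $H$ to be simultaneously $\alpha$-tame in order to pass to $\N^r$.
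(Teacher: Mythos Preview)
Your argument for part (10) is correct and follows the same route as the paper: the paper's proof is the single line ``Statement (10) follows from (9) and Proposition~\ref{prop comactfunctors}'', and you have simply unpacked this. Both you and the paper reduce compactness of $F$, $G$, $H$ in $\text{\rm Tame}(\QQ^r,\text{\rm Vect}_K)$ to compactness of $\alpha^\ast F$, $\alpha^\ast G$, $\alpha^\ast H$ in $\text{\rm Fun}(\N^r,\text{\rm Vect}_K)$ via (9), and then invoke the $\N^r$ structure theory; the paper cites Proposition~\ref{prop comactfunctors} (whose immediate corollary, stated right after it, is that subfunctors and quotients of compact $\N^r$-functors are compact), while you argue the same fact directly via finite generation and Noetherianity of $K[x_1,\dots,x_r]$. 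Your explicit step of passing to a common $\alpha$ (via Proposition~\ref{prop divtame}) is implicit in the paper's one-line proof but indeed necessary, so spelling it out is a virtue rather than a deviation.
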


\begin{proof}
\noindent 
Statement (1) and (2) are clear.    Statement (3) follows from the construction and the fact that colimits  in functor categories are formed object-wise. Same argument gives (4) and (5).  
Statement (6) is implied by (1), (2), and (3). 
To prove (7) note that by  (6) and (4) we know   $\alpha^! F$ is free and  $\alpha^! \phi: \alpha^! F\to \alpha^! G$ is an epimorphism.  
Minimality of $\alpha^! \phi$ follows from the fact that $\alpha^! $ induces a bijection between $\text{Nat}(F,F)$  and
$\text{Nat}(\alpha^! F,\alpha^! F)$. 
Since the same argument  can be used  to  prove both (8) and (9), we present the details of  how to show  (8) only.  Consider a compact functor $F$  in $\text{\rm Fun}(\N^r,\text{\rm Vect}_K)$  and a sequence of subfunctors 
$A_0\subset A_1\subset\cdots \subset \alpha^!F$ in $ \text{\rm Fun}(\QQ^r,\text{\rm Vect}_K)$ such that
$\text{colim}\, A_i=\alpha^!F$. By taking the restriction along $\alpha\colon\N^r\to\QQ^r$ and using (3) we obtain a filtration $\alpha^{\ast}A_0\subset \alpha^{\ast}A_1\subset \cdots\subset \alpha^{\ast}\alpha^!F=F$
such that $\text{colim}(\alpha^{\ast}A_i)=F$.  As $F$   is compact, there is $n$ such that $\alpha^{\ast}A_n=F$.
Apply the Kan extension to get
$\alpha^!\alpha^{\ast} A_n\to A_n\subset \alpha^!F$. The composition of these two natural transformations is an isomorphism. It follows  $A_n= \alpha^!F$ and consequently $\alpha^!F$ is  compact. Statement (10) follows from (9) and Proposition \ref{prop comactfunctors}.

 \end{proof}

\begin{prop}\label{prop kercokertoam}
Let $\phi\colon F\to G$ be a natural transformation  in  $\text{\rm Fun}(\QQ^r,\text{\rm Vect}_K)$. If $F$ and $G$ are tame, then so are $\text{\rm ker}(\phi)$ and $\text{\rm coker}(\phi)$.
\end{prop}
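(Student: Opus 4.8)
The plan is to reduce everything to the category $\text{Fun}(\N^r,\text{Vect}_K)$ via the restriction functor $\alpha^{\ast}$, exploiting the fact (Proposition~\ref{prop basickan}) that $\alpha^{!}$ and $\alpha^{\ast}$ are an exact adjoint pair and that $\alpha^{!}\alpha^{\ast}F\cong F$ for $\alpha$-tame $F$. First I would arrange for a common $\alpha$: since $F$ is $\alpha_1$-tame and $G$ is $\alpha_2$-tame, Proposition~\ref{prop divtame} lets me pass to a common refinement $\alpha$ (e.g.\ a rational with $\alpha_1/\alpha,\alpha_2/\alpha\in\N$, or more simply take $\alpha$ so that both are integer multiples), so that both $F$ and $G$ are $\alpha$-tame. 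Then $\phi\colon F\to G$ corresponds under the adjunction bijection $\text{Nat}(F,G)\cong\text{Nat}(\alpha^{\ast}F,\alpha^{\ast}G)$ (established in the excerpt, since $\alpha^{\ast}\alpha^{!}=\text{id}$) to a natural transformation $\alpha^{\ast}\phi\colon\alpha^{\ast}F\to\alpha^{\ast}G$ in $\text{Fun}(\N^r,\text{Vect}_K)$, and by naturality $\phi=\omega_G\circ\alpha^{!}(\alpha^{\ast}\phi)\circ\omega_F^{-1}$, i.e.\ $\phi\cong\alpha^{!}(\alpha^{\ast}\phi)$ as morphisms once we identify $F\cong\alpha^{!}\alpha^{\ast}F$ and $G\cong\alpha^{!}\alpha^{\ast}G$.

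Next I would form $\ker(\alpha^{\ast}\phi)$ and $\text{coker}(\alpha^{\ast}\phi)$ inside $\text{Fun}(\N^r,\text{Vect}_K)$, which is an abelian category, and apply $\alpha^{!}$ to the exact sequences
\[
0\to\ker(\alpha^{\ast}\phi)\to\alpha^{\ast}F\to\alpha^{\ast}G\to\text{coker}(\alpha^{\ast}\phi)\to 0.
\]
By Proposition~\ref{prop basickan}(4), $\alpha^{!}$ sends this to an exact sequence in $\text{Fun}(\QQ^r,\text{Vect}_K)$; since exactness is checked objectwise and $\alpha^{!}$ preserves it, $\alpha^{!}\ker(\alpha^{\ast}\phi)$ is the kernel of $\alpha^{!}(\alpha^{\ast}\phi)\cong\phi$ and $\alpha^{!}\text{coker}(\alpha^{\ast}\phi)$ is its cokernel (kernels and cokernels being determined up to canonical isomorphism by the exact sequence). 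Hence $\ker(\phi)\cong\alpha^{!}\ker(\alpha^{\ast}\phi)$ and $\text{coker}(\phi)\cong\alpha^{!}\text{coker}(\alpha^{\ast}\phi)$, and both are of the form $\alpha^{!}(-)$ with domain a functor indexed by $\N^r$, so by Definition~\ref{def tame} they are ($\alpha$-)tame.

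The main subtlety, and the step to be careful about, is the identification $\phi\cong\alpha^{!}(\alpha^{\ast}\phi)$: one must check that the counit $\omega$ is a natural isomorphism for $\alpha$-tame functors (which is exactly the characterization of $\alpha$-tameness recorded just after Definition~\ref{def tame}) and that the square relating $\phi$, $\alpha^{!}\alpha^{\ast}\phi$, $\omega_F$ and $\omega_G$ commutes (naturality of $\omega$). Once that is in place, transporting kernels and cokernels along the isomorphisms $\omega_F,\omega_G$ is routine. The only other thing to verify is that passing to a common $\alpha$ is legitimate, which is immediate from Proposition~\ref{prop divtame}. This is really where the restriction to $\QQ^r$ (rather than $\RR^r$) is used: a common rational refinement $\alpha$ exists, which is what makes the reduction to $\N^r$ possible.
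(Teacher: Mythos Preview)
Your proposal is correct and follows essentially the same route as the paper: find a common rational scale $\mu$ at which both $F$ and $G$ are tame via Proposition~\ref{prop divtame}, then use exactness of $\mu^{!}$ (Proposition~\ref{prop basickan}(4)) to identify $\ker(\phi)\cong\mu^{!}\ker(\mu^{\ast}\phi)$ and $\text{coker}(\phi)\cong\mu^{!}\text{coker}(\mu^{\ast}\phi)$. You are simply more explicit than the paper about the identification $\phi\cong\mu^{!}(\mu^{\ast}\phi)$ via the naturality of the counit $\omega$, which the paper leaves implicit.
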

\begin{proof}
Let  $F$ be $\alpha$-tame and $G$ be   $\beta$-tame. Since  $\alpha$ and $\beta$ are rational numbers, there are natural numbers $m$ and $n$ such that $\alpha/n=\beta/m$.
The functors $F$ and $G$ are therefore  $\mu=\alpha/n$-tame (see~\ref{prop divtame}).
 Since Kan extensions  preserve exactness (see~\ref{prop basickan}), $\text{\rm ker}(\phi)$
is isomorphic to ${\mu}^!(\text{ker}({\mu}^{\ast}\phi ))$ and 
$\text{\rm coker}(\phi)$ is isomorphic to ${\mu}^!(\text{coker}({\mu}^{\ast}\phi ))$.
\end{proof}

As a Corollary of~\ref{prop kercokertoam}, we get:
\begin{cor}\label{cor tamnessprop}$ $

\begin{enumerate}
\item Consider an exact sequence in $\text{\rm Fun}(\QQ^r,\text{\rm Vect}_K)$:
\[0\ra F\ra G\ra H\ra 0\]
If two out of $F$, $G$, and $H$ are tame, then so is the third.
\item If $F$ and $G$  in $\text{\rm Fun}(\QQ^r,\text{\rm Vect}_K)$ are tame, then so is $F\oplus G$.
\item $\text{\rm Tame}(\QQ^r,\text{\rm Vect}_K)$ is an abelian subcategory of $\text{\rm Fun}(\QQ^r,\text{\rm Vect}_K)$.
\end{enumerate}
\end{cor}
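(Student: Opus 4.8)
The plan is to deduce all three statements from Proposition~\ref{prop kercokertoam} together with a five-lemma argument. For part~(1), two of the three cases are immediate: if $F$ and $G$ are tame then $H\cong\text{coker}(F\to G)$ is tame by~\ref{prop kercokertoam}; and if $G$ and $H$ are tame then $F\cong\ker(G\to H)$ is tame, again by~\ref{prop kercokertoam}. The remaining case --- $F$ and $H$ tame, with $G$ to be shown tame --- is the one that genuinely requires work, and it is the main obstacle.

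For that case I would first use Proposition~\ref{prop divtame} to pass to a common parameter: if $F$ is $\alpha$-tame and $H$ is $\beta$-tame, choose natural numbers $n,m$ with $\mu:=\alpha/n=\beta/m$, so that $F$ and $H$ are both $\mu$-tame. Applying the restriction $\mu^{\ast}$ to the exact sequence $0\to F\to G\to H\to 0$ gives, by Proposition~\ref{prop basickan}(5), an exact sequence $0\to\mu^{\ast}F\to\mu^{\ast}G\to\mu^{\ast}H\to 0$ in $\text{Fun}(\N^r,\text{Vect}_K)$, and then applying the Kan extension $\mu^{!}$ and invoking Proposition~\ref{prop basickan}(4) gives an exact sequence $0\to\mu^{!}\mu^{\ast}F\to\mu^{!}\mu^{\ast}G\to\mu^{!}\mu^{\ast}H\to 0$ in $\text{Fun}(\QQ^r,\text{Vect}_K)$. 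The counit natural transformations $\omega$ (defined just before Definition~\ref{def tame}) assemble into a commutative ladder between this sequence and $0\to F\to G\to H\to 0$; since $F$ and $H$ are $\mu$-tame, $\omega_F$ and $\omega_H$ are isomorphisms, so the five lemma forces $\omega_G$ to be an isomorphism, i.e.\ $G$ is $\mu$-tame, and in particular tame.

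Part~(2) then follows by applying part~(1) to the split exact sequence $0\to F\to F\oplus G\to G\to 0$; alternatively it is immediate from the fact that $\alpha^{!}$ commutes with direct sums (Proposition~\ref{prop basickan}(3)). For part~(3), I would observe that $\text{Tame}(\QQ^r,\text{Vect}_K)$ is by definition a full subcategory of the abelian category $\text{Fun}(\QQ^r,\text{Vect}_K)$, that it contains the zero functor (since $0=\alpha^{!}0$), that it is closed under finite biproducts by part~(2), and that it is closed under kernels and cokernels by Proposition~\ref{prop kercokertoam}. A full subcategory of an abelian category satisfying these closure conditions inherits the additive structure and has its kernels and cokernels computed exactly as in the ambient category; hence it is itself abelian and its inclusion is exact, which is the content of part~(3).
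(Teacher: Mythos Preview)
Your proof is correct and follows the same line as the paper, which simply records the result as a corollary of Proposition~\ref{prop kercokertoam} without spelling out details. You are right to single out the middle case of part~(1)---$F$ and $H$ tame implies $G$ tame---as not being a direct consequence of~\ref{prop kercokertoam}; your five-lemma argument via the counit $\omega$ and a common $\mu$ is exactly the natural way to complete it, using the same machinery (Propositions~\ref{prop divtame} and~\ref{prop basickan}) that underlies the proof of~\ref{prop kercokertoam} itself.
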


Corollary~\ref{cor tamnessprop} states in principle that 
$\text{\rm Tame}(\QQ^r,\text{\rm Vect}_K)$ is an abelian category. Note that even though 
$\text{\rm Tame}(\QQ^r,\text{\rm Vect}_K)$ is
 closed under finite direct sums, 
infinite direct sums however do not  in general preserve tameness.



We finish the section by explaining how to compute the support, rank and $0$-Betti diagram of a
tame functor $G\colon \QQ^r \to \text{\rm Vect}_K$.
Here is the procedure:
\begin{itemize}
\item Choose $\alpha$ in $ \QQ$, such that $\omega:\alpha^! \alpha^{\ast}G\to G$ is an isomorphism. In this step we choose a scale $\alpha$ for which $G$   is $\alpha$-tame.
\item 
Find a sequence of vector spaces $\{V_w\}_{w\in\N^r}$ such that  $\alpha^{\ast}G/\text{rad}(\alpha^{\ast}G)$ is isomorphic to $\oplus_{w \in \N^r} U_w \otimes V_w $.
\end{itemize}
We have now all the needed information to  compute $\text{supp}(G), \text{rank}(G)$ and $\beta_0G$:
\begin{prop}\hspace{1mm}
Let $\alpha$ and $\{V_w\}_{w\in\N^r}$ be defined as above.
\begin{enumerate}
\item $\text{\rm supp}(G)=\{\alpha w\ |\ w\in\text{\rm supp}(\alpha^{\ast}G)\}=\{\alpha w\ |\  w\in N^r\text{ and } V_w\not=0\}$;
\item $\text{\rm rank}(G)=\text{\rm rank}(\alpha^{\ast}G)=\sum_{ w\in \N^r}\text{\rm dim}_K V_w$;
\item $\beta_0G\colon \QQ^r\to \N$ is given by:
\[\beta_0G(v)=\begin{cases}
\beta_0(\alpha^{\ast}G)(w)=\text{\rm dim}_KV_{w} &\text{ if  }v=\alpha w\text{ for }w\in\N^r\\
0 &\text{ otherwise }
\end{cases}\]
\end{enumerate}
 \end{prop}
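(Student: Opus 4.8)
The plan is to transport a minimal cover of $\alpha^{\ast}G$ over $\N^r$ to a minimal cover of $G$ over $\QQ^r$ by applying the Kan extension $\alpha^!$ and composing with $\omega$, and then to read the three invariants off the free source of that cover.

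First I would invoke the results recalled in Section~\ref{sec:funcindbyN}: the functor $\alpha^{\ast}G\colon\N^r\to\text{Vect}_K$ admits a minimal cover $\phi\colon F\to\alpha^{\ast}G$ with $F\cong\bigoplus_{w\in\N^r}K(w,-)\otimes V_w$, where the $V_w$ are exactly the vector spaces appearing in $\alpha^{\ast}G/\text{rad}(\alpha^{\ast}G)\cong\bigoplus_{w\in\N^r}U_w\otimes V_w$; in particular $\text{supp}(\alpha^{\ast}G)=\{w\mid V_w\neq 0\}$, $\text{rank}(\alpha^{\ast}G)=\sum_w\dim_K V_w$, and $\beta_0(\alpha^{\ast}G)(w)=\dim_K V_w$ whenever these invariants are defined.

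Next I would apply $\alpha^!$. By Proposition~\ref{prop basickan}(7) the natural transformation $\alpha^!\phi\colon\alpha^!F\to\alpha^!\alpha^{\ast}G$ is a minimal cover in $\text{Fun}(\QQ^r,\text{Vect}_K)$; composing it with $\omega\colon\alpha^!\alpha^{\ast}G\to G$, which is an isomorphism because $G$ is $\alpha$-tame, yields $\omega\circ\alpha^!\phi\colon\alpha^!F\to G$. This is again a minimal cover: it is an epimorphism, and if $f\colon\alpha^!F\to\alpha^!F$ satisfies $\omega\circ\alpha^!\phi=\omega\circ\alpha^!\phi\circ f$ then $\alpha^!\phi=\alpha^!\phi\circ f$ since $\omega$ is monic, so $f$ is an isomorphism by minimality of $\alpha^!\phi$. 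Hence the invariants of $G$ are those of the free functor $\alpha^!F$. Since $\alpha^!$ commutes with direct sums (Proposition~\ref{prop basickan}(3)) and $\alpha^!K(w,-)\cong K(\alpha w,-)$ (Proposition~\ref{prop basickan}(1)), expanding $K(w,-)\otimes V_w$ as a direct sum of copies of $K(w,-)$ over a basis of $V_w$ gives $\alpha^!F\cong\bigoplus_{w\in\N^r}K(\alpha w,-)\otimes V_w$.

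From this description the three assertions follow at once: the support of $\alpha^!F$ is $\{\alpha w\mid w\in\N^r,\ V_w\neq 0\}=\{\alpha w\mid w\in\text{supp}(\alpha^{\ast}G)\}$, which is (1); summing the dimensions of its components gives $\text{rank}(G)=\sum_w\dim_K V_w=\text{rank}(\alpha^{\ast}G)$, which is (2); and the component of $\alpha^!F$ at $v$ is $V_w$ if $v=\alpha w$ for some $w\in\N^r$ and $0$ otherwise, which is (3). I expect the only step needing care is the case distinction in (3): one must note that $w\mapsto\alpha w$ is injective, so that $v=\alpha w$ determines $w$ uniquely and $\beta_0 G$ is well defined; beyond that the argument is a direct assembly of Proposition~\ref{prop basickan} with the $\N^r$-theory of minimal covers from Section~\ref{sec:funcindbyN}.
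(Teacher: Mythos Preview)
Your proof is correct and follows essentially the same approach as the paper: both arguments transport a minimal cover of $\alpha^{\ast}G$ through $\alpha^{!}$ using Proposition~\ref{prop basickan}(7), identify the resulting free functor via $\alpha^{!}K(w,-)\cong K(\alpha w,-)$, and read off the invariants. The paper's version is terser, writing $\alpha^{!}\alpha^{\ast}G=G$ directly rather than composing explicitly with $\omega$, but the content is the same.
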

\begin{proof}
This is a consequence of two facts: first $\alpha^! K(w,-)=K(\alpha w,-)$ (see~\ref{prop basickan}) and  second  if $F\to \alpha^{\ast}G$   is a minimal cover of $ \alpha^{\ast}G$
in $\text{Fun}(N^r,\text{Vect}_K)$, then $\alpha^! F\to \alpha^!\alpha^{\ast}G=G$ is a minimal cover of
$G$  (see~\ref{prop basickan}.(7)).
\end{proof}

The right sides of the equalities in the above proposition a priori depend on the choice of a scale $\alpha$ for which 
$G$   is $\alpha$-tame. However since the left sides are independent  of $\alpha$, then so are the right sides.

\begin{example}\label{ex bar}
Let $w\leq u$ be two elements in $\QQ^r$. There is a unique inclusion $K(u,-)\subset K(w,-)$.
The cokernel of this inclusion is denoted by $[w,u)$.
Numerical invariants for functors of this type are studied in ~\cite{Bar}. 
 Since the free functors are tame, according to~\ref{cor tamnessprop}.(1) $[w,u)$ is tame. It is clear that $[w,u)$ is also compact. 
Note that $\text{supp}([w,u))=\{w\}$, $\text{rank}([w,u))=1$, and:
 \[\beta_0[w,u)(v)=\begin{cases}
 1 &\text{ if } v=w\\
 0 &\text{ if } v\not =w.
 \end{cases}\]
\end{example} 
Similarly to functors indexed by $\N$ (see~\ref{pt compactr=1}),  there is a classification for compact and tame functors indexed by $\QQ$.
 \begin{prop}\label{prop chartamer=1}
  Any compact object in $\text{\rm Tame}(\QQ, \text{\rm Vect}_K)$  is isomorphic to a finite direct sum of functors of the form
 $[w,u)$ and $K(v,-)$.  Moreover the isomorphism types of  these   summands are uniquely determined by the isomorphism type of the functor.
 \end{prop}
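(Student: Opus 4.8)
The plan is to reduce the classification over $\QQ$ to the already-known classification over $\N$ via the Kan extension $\alpha^{!}$, using the fact established in Proposition~\ref{prop basickan} that $\alpha^{!}$ is exact, preserves freeness, preserves compactness, sends minimal covers to minimal covers, and (on $\alpha$-tame functors) has $\alpha^{\ast}$ as an inverse up to natural isomorphism. So the first step is: given a compact $G$ in $\text{\rm Tame}(\QQ,\text{\rm Vect}_K)$, choose $\alpha$ in $\QQ$ for which $\omega\colon \alpha^{!}\alpha^{\ast}G\to G$ is an isomorphism, i.e. for which $G$ is $\alpha$-tame. By Proposition~\ref{prop basickan}.(9), $\alpha^{\ast}G\colon \N\to\text{\rm Vect}_K$ is then compact, hence of finite rank, so by the classification over $\N$ recalled in the excerpt (Proposition~\ref{prop classification}) it is isomorphic to a finite direct sum of bars $[w,u)$ and free functors $K(v,-)$ with $w,u,v\in\N$.

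The second step is to push this decomposition through $\alpha^{!}$. Since $\alpha^{!}$ commutes with finite direct sums (Proposition~\ref{prop basickan}.(3)), $G\cong\alpha^{!}(\alpha^{\ast}G)$ is isomorphic to the corresponding finite direct sum of the functors $\alpha^{!}K(v,-)$ and $\alpha^{!}[w,u)$. By Proposition~\ref{prop basickan}.(1), $\alpha^{!}K(v,-)\cong K(\alpha v,-)$. For the bar summands, $[w,u)$ is the cokernel of the inclusion $K(u,-)\subset K(w,-)$, so by exactness of $\alpha^{!}$ and part (1) again, $\alpha^{!}[w,u)$ is the cokernel of $K(\alpha u,-)\hookrightarrow K(\alpha w,-)$, which is precisely $[\alpha w,\alpha u)$ in the notation of Example~\ref{ex bar}. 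This establishes existence: $G$ is isomorphic to a finite direct sum of functors of the form $[w,u)$ and $K(v,-)$ with $w,u,v\in\QQ$ (indeed all in $\alpha\N$, but that is immaterial).

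For uniqueness, I would argue in the reverse direction. Suppose $G\cong\bigoplus_a K(v_a,-)\oplus\bigoplus_b[w_b,u_b)$, and fix any $\alpha$ for which $G$ is $\alpha$-tame and simultaneously all $v_a,w_b,u_b$ lie in $\alpha\N$ (possible by clearing denominators and using Proposition~\ref{prop divtame} together with the fact that $\alpha$-tameness is preserved under passing to $\alpha/n$). Applying $\alpha^{\ast}$ and using that $\alpha^{\ast}$ is inverse to $\alpha^{!}$ on $\alpha$-tame functors, one recovers a decomposition of $\alpha^{\ast}G$ over $\N$ into bars and free functors with integer endpoints $v_a/\alpha$, $w_b/\alpha$, $u_b/\alpha$ — here one checks the elementary identity $\alpha^{\ast}K(\alpha n,-)\cong K(n,-)$ and $\alpha^{\ast}[\alpha w,\alpha u)\cong[w,u)$, which follow from Proposition~\ref{prop basickan}.(2) and exactness of $\alpha^{\ast}$ on exact sequences of tame functors. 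The uniqueness part of the classification over $\N$ then forces the multiset of integer summands of $\alpha^{\ast}G$ to be determined by the isomorphism type of $\alpha^{\ast}G$, hence of $G$; rescaling by $\alpha$ shows the multiset of $\QQ$-summands of $G$ is determined as well, and one notes this conclusion is independent of the auxiliary choice of $\alpha$ since two valid choices have a common refinement.

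The main obstacle I anticipate is the bookkeeping around the scale $\alpha$ in the uniqueness half: one must be careful that $\alpha$ can be chosen to work simultaneously for tameness of $G$ and for integrality of all the endpoints appearing in two a priori different decompositions, and that the identities $\alpha^{\ast}K(\alpha v,-)\cong K(v,-)$, $\alpha^{\ast}[\alpha w,\alpha u)\cong[w,u)$ hold on the nose (not merely up to the coarser $\text{meet}$-formula of Proposition~\ref{prop basickan}.(2), which simplifies precisely because the generator already sits at a lattice point $\alpha v$). Everything else is a formal transport of the $\N$-classification along the adjoint equivalence $\alpha^{!}\dashv\alpha^{\ast}$ restricted to $\alpha$-tame functors.
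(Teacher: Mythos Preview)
Your proof is correct and follows essentially the same route as the paper: choose $\alpha$ so that $G$ is $\alpha$-tame, use compactness of $\alpha^{\ast}G$ to invoke the classification over $\N$, and push the decomposition back through $\alpha^{!}$ (which preserves direct sums and sends bars and frees to bars and frees). The paper dispatches uniqueness in a single sentence (``shown in the same way''), whereas you spell out the scale-refinement and $\alpha^{\ast}$-transport argument in detail; your extra care there is warranted but not a departure in strategy.
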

 \begin{proof}
 Let $G\colon \QQ\to \text{\rm Vect}_K$ be a compact and tame functor. Choose $\alpha$ in $\QQ$ such that
 $G=\alpha^{!}\alpha^{\ast}G$. Since $\alpha^{\ast}G\colon\N\to\text{\rm Vect}_K$ is compact, it is isomorphic to
 a finite direct sum of bars and free functors (see~\ref{pt compactr=1}). As $\alpha^{!}$ commutes with direct sums, we get
 the desired decomposition of $G$. Uniqueness is shown in the same way.
 \end{proof}

Note that Proposition~\ref{prop chartamer=1} is a direct extension of the classical classification theorem of graded modules over a PID (see~\ref{prop classification}).


 \section{Noise}\label{noise}
An important step in extracting topological features from a data set is to ignore noise.
Depending on the situation, noise can mean different things. In this section we discuss what we mean by noise for vector space valued tame functors.   Our    objective  is to be able to mark some functors as small.
Thus for any non-negative
 rational number $\epsilon$, we should have a collection $\cS_{\epsilon}$ of tame functors
 which  we consider to be $\epsilon$-small. This collection is called the  $\epsilon$-{\bf component} of a noise system and its members are called  noise of size at most $\epsilon$.
 Noise systems should satisfy certain natural constrains. Here is a formal definition:
  
\begin{Def}\label{def noise} 
A noise system in $\text{\rm Tame}(\QQ^r,\text{\rm Vect}_K)$ is a collection $\{\cS_{\epsilon}\}_{\epsilon\in \QQ}$ of sets
of tame functors, indexed by  rational non-negative numbers $\epsilon$, such that:
\begin{itemize}
\item the zero functor belongs to $\cS_{\epsilon}$ for any $\epsilon$;
\item if $0\leq \tau<\epsilon$, then $\cS_{\tau}\subseteq \cS_{\epsilon}$;
\item if $0\to F\to G\to H\to 0$ is an exact sequence in $\text{\rm Tame}(\QQ^r,\text{\rm Vect}_K)$, then
\begin{itemize}
\item if $G$ is in $\cS_{\epsilon}$, then so are $F$ and $H$;
\item if 
$F$ is in $\cS_{\epsilon}$ and $H$  is in $\cS_{\tau}$, then $G$  is in $\cS_{\epsilon+\tau}$.
\end{itemize}
\end{itemize}
 \end{Def}
 
 The last requirement  for a noise system is called additivity.
 
Let $\{\cS_\epsilon\}_{\epsilon \in \QQ}$ and $\{\cT_\epsilon\}_{\epsilon \in \QQ}$ be noise systems. If, for any $\epsilon$   in   $\QQ$,   $\cS_\epsilon\subset \cT_\epsilon$, then we write $\{\cS_\epsilon\}_{\epsilon\in\QQ}\leq
\{\cT_\epsilon\}_{\epsilon\in\QQ}$. With this relation, noise systems in $\text{\rm Tame}(\QQ^r,\text{\rm Vect}_K)$ form a  poset. This poset has the unique  minimal element given by the sequence  whose components contain only the zero functor. It has also the unique maximal element given by the  sequence whose components contain all tame functors.   Note that  the intersection of any family of noise systems is again a noise system.  This implies for example that the poset of noise systems is a lattice. Moreover, for any sequence of sets $\{\cS_\epsilon\}_{\epsilon\in \QQ}$ of tame functors, the intersection of all the noise systems $ \{\cT_\epsilon\}_{\epsilon\in\QQ}$ for which $\cS_\epsilon \subset \cT_\epsilon$, for any $\epsilon$ in $\QQ$, is the smallest noise system containing  $S_\epsilon$ in its $\epsilon$-component. We call it  the noise system {\bf generated by the sequence}   $\{\cS_\epsilon\}_{\epsilon\in \QQ}$ and denote it by $\langle\{\cS_\epsilon\}_{\epsilon\in \QQ}\rangle$. 

Let  $\{\cS_\epsilon\}_{\epsilon\in\QQ}$ be a  noise system. Define $\cS_{\epsilon}^{c}:=\{F\in \cS_{\epsilon}\ |\ 
F\text{ is compact}\}$. One can use Proposition~\ref{prop basickan}(10) to see that $\{\cS^c_\epsilon\}_{\epsilon\in\QQ}$  is also a noise system.
We call it the compact part of $\{\cS_\epsilon\}_{\epsilon\in\QQ}$.
It follows that if $\{\cS_\epsilon\}_{\epsilon\in \QQ}$ is a sequence of sets of compact tame functors, then $\langle\{\cS_\epsilon\}_{\epsilon\in \QQ}\rangle$ consists of compact functors.

By definition  the $0$-component $\cS_0$ of any noise system  $\{\cS_\epsilon\}_{\epsilon\in\QQ}$ is a  Serre subcategory of  $\text{\rm Tame}(\QQ^r,\text{\rm Vect}_K)$ (see~\cite{Serre2}). In particular the direct sum of  two  functors in $\cS_0$ is again  a functor in  $\cS_0$. 
Since this is not true in general for other components, we need to introduce
a definition: 
a component $\cS_{\epsilon}$ of a noise system is said to be {\bf closed under direct sums} if, for any $F$ and $G$ in $\cS_{\epsilon}$, the direct sum
$F\oplus G$ also belongs to  $\cS_{\epsilon}$. Being closed under direct sums is  important for some of our constructions as in this case,
for any $\epsilon>0$,
any compact and tame functor has a unique maximal subfunctor that belongs to $\cS_{\epsilon}$ (see Proposition \ref{maximal noise}).
 In Section~\ref{sec directsums} we  try to understand under what circumstances a noise system is closed under direct sums.

We now present several examples of noise systems. The first two generalise what we  interpret as noise in the context of persistent homology induced by interleaving distance (see \cite{persistence}).

\begin{point}\label{pt stannoisecone}{\bf Standard Noise in the direction of a cone.} 
Let  $V\subset \QQ^r$ be a subset. Set:
\[V_{\epsilon}:=\left\{F\in\text{\rm Tame}(\QQ^r,\text{\rm Vect}_K) \ \middle|\ {\begin{tabular}{c} \text{for any $u$ in $\QQ^r$ and for any   $x$ in $F(u)$,}\\ \text{there is $w$ in $V$ such that }\\
\text{$||w||=\epsilon$ and $x$ is in $\text{ker}\left(F(u\leq u+w)\right)$}\end{tabular}} \right\}\]
We claim that if $V$ is a cone (see~\ref{pt cone}), then the sequence $\{V_\epsilon\}_{\epsilon\in \QQ}$ is a noise system which we
call  the {\bf standard noise in the direction of the cone $V$}.
It is clear that the zero functor belongs to $V_{\epsilon}$ for any $\epsilon$. Let  $0<\tau<\epsilon$.
If $x$ is in $\text{ker}(F(u\leq u+w))$, then $x$ is also in $\text{ker}(F(u\leq u+\frac{\epsilon}{\tau} w))$, since $w\leq \frac{\epsilon}{\tau} w$.
As  $||\frac{\epsilon}{\tau} w||=\frac{\epsilon}{\tau} ||w|| $,
the inclusion $V_{\tau}\subset V_{\epsilon}$ follows. Consider now an exact sequence $0\to F\to G\to H\to 0$
of tame functors.
If  $G$ is in $V_{\epsilon}$, then, by naturality of $F\hookrightarrow  G$ and $G\twoheadrightarrow H$,
 both functors $F$ and $H$ are also in $V_{\epsilon}$. 
   Assume $F$ is in $V_\epsilon$ and $H$ is in $V_{\tau}$.
 Take an element $x\in G(u)$. Its image $x_1$ in $H(u)$ is therefore in $\text{ker}(H(u\leq u+w))$ for some 
 $w$ in $V$ with $||w||=\tau$. This means that $G(u\leq u+w)$ takes $x$ to an element $x_2$ in $F(u+w)\subset G(u+w)$. We can thus find $w'$ in $V$ with $||w'||=\epsilon$ such that $x_2$ is in $\text{ker}(F(u+w\leq u+w+w'))$.
 It follows that $x$ is in  $\text{ker}(G(u\leq u+w+w'))$.  Since $||w+w'||\leq ||w||+||w'||=\tau+\epsilon$, $x$ is therefore also in $\text{ker}(G(u\leq u+\frac{\tau+\epsilon}{||w+w'||}(w+w')))$. 
 The assumption that $V$ is a cone guarantees that $\frac{\tau+\epsilon}{||w+w'||}(w+w')$ belongs to $V$.
We  can conclude  $G$ belongs to $V_{\epsilon+\tau}$.


Note that the hypothesis that $V\subseteq \QQ^r$ is a cone is fundamental  for  $\{V_\epsilon\}_{\epsilon\in \QQ}$ to be a noise system. To illustrate this consider for example $r=2$,  $V$ to be the set union of two axes $\{(a,0)\ |\ a\in \QQ\}\cup \{(0,b)\ |\ b\in \QQ\}$, and $u=(0,0)$.  The tame functors $F,H\colon \QQ^r \to \text{\rm Vect}_K$ given respectively by:
\[F(v_1,v_2)=\begin{cases}
 K &\text{ if } v_2 <1\\
0 &\text{ otherwise }
\end{cases} \quad \text{and} \quad
H(v_1,v_2)=\begin{cases}
 K &\text{ if } v_1 <1\\
0 &\text{ otherwise }
\end{cases}
\]
are in $V_{1}$.
The functor $G:\QQ^r \to \text{\rm Vect}_K$ defined as:
\[G(v_1,v_2)=\begin{cases}
 K &\text{ if } v_1 <1\ \text{or} \ v_2 <1  \\
0 &\text{ otherwise }
\end{cases}
\]
fits into an exact sequence 
$0\to F\to G\to H\to 0$ but is not in $V_{\epsilon}$ for any positive rational number $\epsilon$.  

In general,  neither $V_{\epsilon}$ nor its compact part  $V_{\epsilon}^c$ are closed under direct sums. For example consider $w=(1,0,1)$ and
 $w'=  (1/2,1,0)$ in $\QQ^ 3$.
Define  $F\colon \QQ^3 \to \text{\rm Vect}_K$ to be the tame functor such that  $F(v)=0$ if $v\geq w$ and $F(v)=K$
otherwise, with $F(u\leq v)$ being the identity  if $F(u)$ and $F(v)$ are  non zero. Similarly, let $G\colon \QQ^3 \to \text{\rm Vect}_K$ be 
a tame functor such that  $G(v)=0$ if $v\geq w'$ and $G(v)=K$ otherwise, with $G(u\leq v)$ being the identity  if $G(u)$ and $G(v)$ are  non zero. Note that $F$  is  in $\text{Cone}(w)_{1}^{c}$ and $G$  is  in $\text{Cone}(w')_{1}^{c}$. 
Although $F$ and $G$ are both in $\text{Cone}(w,w')^c_{1}$, the functor $F\oplus G$ is not since there is no vector $z$ in $\text{Cone}(w,w')$ such that $||z||=1$, $z \geq w$ and $z \geq w'$.   


Note that in the case $r=1$, the $\epsilon$-component of the standard noise in the direction of the ray $\QQ$ coincides with the set of $\epsilon$-trivial persistence modules as defined in \cite{BauerLesnick}.

\end{point}


\begin{point}{\bf The compact part of the standard noise in the direction of a ray.}\label{pt sandal}
Let $V$ be a ray (a cone generated by one element, see~\ref{pt cone}). Then there is a unique $w$ in $V$ such that
$||w||=1$. In this case $F$ belongs to $V_{\epsilon}$ if and only if $F(v)=\text{ker}(F(v\leq v+\epsilon w))$, for any $v$ in $\QQ^r$ (the map $F(v\leq v+\epsilon w)$ is the zero map).  For example 
the cokernel of the unique inclusion $K(v+ \epsilon w,-)\subset K(v,-)$  belongs to $V_{\epsilon}$. 
Recall that   this cokernel is denoted by $[v, v+\epsilon w)$ (see~\ref{ex bar}).  Note that this cokernel is compact and hence it belongs to $V_\epsilon^c$.  Furthermore any finite direct sum $\oplus _{i=1}^n[v_i, v_i+\epsilon w)$ is also  a member of 
$V_\epsilon^c$. We claim that 
$\{V_\epsilon^c\}_{\epsilon\in \QQ}$
is the smallest noise system containing all  such finite direct sums  in its $\epsilon $-component for any $\epsilon$.
In other words  $\{V_\epsilon^c\}_{\epsilon\in \QQ}$ is the noise system generated by a sequence of sets $\{S_{\epsilon}\}_{\epsilon\in \QQ}$, where $S_{\epsilon}$ is the set of all functors of the form
$\oplus_{i=1}^n[v_i,v_i+\epsilon w)$. We have just explained the relation  $\langle\{S_{\epsilon}\}_{\epsilon\in \QQ}\rangle \leq
 \{V_\epsilon^c\}_{\epsilon\in \QQ}$. Let $F$ be in $V^c_\epsilon$. Recall that any element $x$ in $F(v)$
induces  a unique natural transformation $x\colon K(v,-)\to F$ (see~\ref{point minimal}). Since its precomposition
with $K(v+\epsilon w,-)\subset K(v,-)$  is trivial, $x\colon K(v,-)\to F$ factors as $K(v,-)\to [v,v+\epsilon w)\to F$.
This, together with compactness,   implies that 
$F$ is a quotient of a  finite direct sum of functors of the form $ [v,v+\epsilon w)$ which  implies  $F$ is in the $\epsilon$-component of $\langle\{S_{\epsilon}\}\rangle_{\epsilon\in \QQ}$.

Since a direct sum of zero maps is a zero map, the collections $V_{\epsilon}$ and $V_{\epsilon}^c$ are preserved by direct sums for any $\epsilon$.
\end{point}
\begin{point}{\bf Standard Noise in the direction of a sequence of vectors.} \label{pt stnoisedirvectors}
Let us choose a finite sequence $\V=\{v_1,v_2\ldots, v_n\}$ of elements in $\QQ^r$.
For any $w$ in $\text{Cone}(\V)=\text{Cone}(v_1,v_2\ldots, v_n)$, consider the set $T(w)$ of sequences $(a_1,\ldots, a_n)$ of non-negative rational numbers such that $w=a_1v_1+\cdots+a_nv_n$. Define the {\bf $\V$-norm}
as:
\[||w||_{\V}=\text{min}_{\{a_1,\ldots, a_n\}\in T(w)}||(a_1,\ldots, a_n)||=\text{min}_{\{a_1,\ldots, a_n\}\in T(w)}\text{max}_{1\leq i\leq n}a_i\]
Set:
\[\V_{\epsilon}:=\left\{F\in\text{\rm Tame}(\QQ^r,\text{\rm Vect}_K)\ \middle|\
\begin{tabular}{c} \text{for any  $v$ in $\QQ^r$ and for any $x$ in $F(v)$,}\\ \text{ there is $w$ in $\text{Cone}(\V)$ s.t. $||w||_{\V}=\epsilon$ }\\
\text{and $x$ is in $\text{ker}\left(F(v\leq v+w)\right)$}\end{tabular}
\right\}
\]
One can check that $||aw||_{\V}=a||w|||_{\V}$ and $||u+w|||_{\V}\leq ||u|||_{\V}+||w|||_{\V}$ for any 
$v$ and $w$   in $\text{Cone}(\V)$ and any $a$   in $\QQ$. Exactly the same arguments as in~\ref{pt stannoisecone}
 can be then used to prove that   $\{\V_{\epsilon}\}_{\epsilon\in \QQ}$ is also a noise system. We call it the {\bf standard noise in the direction of the sequence $\V$}.
 
 For example let $v$ in $\QQ^r$ be non-zero and $\V=\{v\}$.  In this case for any $w$   in  $\text{Cone}(v)$, $||w||_{\V}=||w||/||v||$ and   $\V_{\epsilon}=\text{Cone}(v)_{\epsilon/||v||}$ for any $\epsilon$ in $\QQ$.
 \end{point}
\begin{point}{\bf Domain noise.}\label{domain noise}
Let   $\calx=\{X_{\epsilon}\}_{\epsilon\in \QQ}$ be a sequence of subsets of $\QQ^r$ with the property that if $0\leq \tau<\epsilon$, then $X_{\tau} \subseteq X_{\epsilon}$. 
For a tame functor $F:\QQ^r\to \text{\rm Vect}_K$ we define 
  $\text{domain}(F):=\{v \in \QQ^r \ | \ F(v) \ne 0\}$ and call it the {\bf domain} of $F$. For example the domain of the zero functor is  empty.
Set:
\[
\calx_{\epsilon}:=\{F\in\text{\rm Tame}(\QQ^r,\text{\rm Vect}_K)\ |\
\text{domain}(F) \subseteq X_{\epsilon} \}.
\]
The fact that $\{\calx_{\epsilon}\}_{\epsilon\in \QQ}$ is a noise system is a direct consequence of the fact that $X_{\tau} \subseteq X_{\epsilon}$ for any $0\leq\tau<\epsilon$.
This noise system satisfies an extra condition. For any exact sequence $0\to F\to G\to H\to 0$ of tame functors,
if $F$ is in $ \calx_{\epsilon}$ and $H$   in $\calx_{\tau}$, then $G$ is in $\calx_{\text{max}\{\epsilon,\tau\}}$.
This implies in particular that both  $\calx_{\epsilon}$  and  $\calx_{\epsilon}^c$  are closed under direct sums.\end{point}
\begin{point}{\bf Dimension noise.}\label{dimensionnoise} 
Let $\caln=\{n_{\epsilon}\}_{\epsilon\in \QQ}$ be a sequence of natural numbers such that  $n_0=0$ and 
 $n_{\tau}+n_{\epsilon}\leq n_{\tau+\epsilon}$
for any $\tau$ and $\epsilon$ in $\QQ$. 
Set:
\[
\caln_{\epsilon}:=\{F\in\text{\rm Tame}(\QQ^r,\text{\rm Vect}_K)\ |\ \text{for any $v$ in $\QQ^r$, }  \text{dim}_K F(v)\leq n_\epsilon \}.
\]
The proof that  $\{\caln_{\epsilon}\}_{\epsilon\in \QQ}$ is a noise system is straightforward and only depends on the facts that  $n_0=0$, 
 $\{n_{\epsilon}\}_{\epsilon\in \QQ}$ is a non decreasing sequence of non negative numbers, and that an sequence of tame functors $0\to F\to G\to H\to 0$ is exact if it is object wise exact.
%
\end{point}

More examples of noise systems can be produced using the property that the intersection of an arbitrary family of noise systems is a noise system.  For example:
\begin{point}{\bf Intersection noise.}
Let $L^1, \ldots ,  L^n$ be rays in $\QQ^r$. Choose the unique $w_i$ in $L^i$ such that $||w_i||=1$.
The intersection of $\{L^{i}_{\epsilon}\}_{\epsilon\in \QQ}$ for $1\leq i\leq n$ is the noise system whose  $\epsilon$-component consists of these tame functors $F:\QQ^r \rightarrow {\rm Vect}_K$
for which $F(v\leq v+ \epsilon  w_i)$  is the zero map for any $v$ and any $1\leq i\leq n$.
\end{point}

\begin{point}{\bf Noise generated by a functor.}
Let $M$ be a tame functor and $\alpha$ a positive rational number. 
Let $\langle M,\alpha \rangle$ be the smallest noise system such that $M$ is in $\langle M,\alpha \rangle_{\alpha}$.
This might be interesting in cases in which one wants to declare some functor as noise of a certain size.
The collection $\langle M,\alpha \rangle_{\epsilon}$ can be described inductively as follows:
 \[\langle M,\alpha \rangle_{\epsilon}=\begin{cases}
\{0\} &\text{if } \ 0\leq \epsilon<\alpha  \\
[M] &\text{if} \ \epsilon=\alpha\\
\left[\cup_{i=1} ^{n-1} \langle M,\alpha\rangle_{i\alpha}\ \cup\ \text{Ext}(\langle M,\alpha\rangle_{(n-i)\alpha})\right] & \text{if} \ \epsilon=n \alpha \ \text{for}\ n>1\\
\langle M,\alpha\rangle_{n\alpha} & \text{if } n\alpha\leq \epsilon<(n+1)\alpha
\end{cases}
\]
where  $[K]$ denotes  the set of all tame subfunctors and quotients of $K$ and 
 $\text{Ext}(K)$ denotes  the collection of all tame functors $G$ that fit into an exact sequence of the form
$0\to F\to G\to H\to 0$ where $F$   and $H$ are in $[K]$.
\end{point}

\section{Noise systems closed under direct sums}\label{sec directsums}
Let $\{\cS_{\epsilon}\}_{\epsilon\in \QQ}$ be a noise system in $\text{\rm Tame}(\QQ^r,\text{\rm Vect}_K)$ and
 $F\colon\QQ^r\to\text{Vect}_K$ be a tame and compact functor. 
Consider the collection of all subfunctors of $F$  that belong to $\cS_{\epsilon}$.  Because of the compactness of $F$, Kuratowski-Zorn lemma implies that this collection has maximal elements with respect to the inclusion. In general however there could be  many  such maximal elements.
In this section we discuss under what circumstances there is only one maximal element in this collection. The subfunctor corresponding to this element   is  the unique  maximal noise of size $\epsilon$ inside $F$ and we will use it to denoise   $F$. If it exists, we denote this maximal subfunctor by $F[\cS_{\epsilon}]\subset F$. By definition
the inclusion $F[\cS_{\epsilon}]\subset F$ satisfies the following universal property: $F[\cS_{\epsilon}]$ belongs to $\cS_{\epsilon}$ and, for any $G$ in  $\cS_{\epsilon}$,
 any natural transformation $G\to F$  maps  $G$ into $F[\cS_{\epsilon}]\subset F$.  Thus for any $G$ in  $\cS_{\epsilon}$, the inclusion $F[\cS_{\epsilon}]\subset F$ induces a bijection between $\text{Nat}(G,F)$ and $\text{Nat}(G,F[\cS_{\epsilon}])$. 
   
\begin{prop}\label{maximal noise}
Let $\{\cS_{\epsilon}\}_{\epsilon\in \QQ}$ be a noise system in $\text{\rm Tame}(\QQ^r,\text{\rm Vect}_K)$.
The component  $\cS_{\epsilon}^c$ is closed under  direct sums if and only if  $F[\cS_{\epsilon}]\subset F$ exists
for any tame and compact functor  $F\colon\QQ^r\to\text{\rm Vect}_K$.
\end{prop}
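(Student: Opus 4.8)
The plan is to prove the two implications of Proposition~\ref{maximal noise} separately; the only input beyond results already available is the remark that a \emph{tame} subfunctor of a tame and compact functor is automatically compact. Indeed, if $G\subseteq F$ with $F$ tame and compact and $G$ tame, then $F/G$ is tame by Proposition~\ref{prop kercokertoam}, so $0\to G\to F\to F/G\to 0$ is an exact sequence in $\text{\rm Tame}(\QQ^r,\text{\rm Vect}_K)$, and Proposition~\ref{prop basickan}(10) forces $G$ to be compact. Consequently every subfunctor of $F$ that belongs to some $\cS_\epsilon$ in fact belongs to $\cS_\epsilon^c$, which is what allows the closure hypothesis to be applied.

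For the implication ``$F[\cS_\epsilon]$ exists for every tame and compact $F$ $\Rightarrow$ $\cS_\epsilon^c$ is closed under direct sums'', I would take $A,B\in\cS_\epsilon^c$. Then $A\oplus B$ is tame by Corollary~\ref{cor tamnessprop}(2) and compact by Proposition~\ref{prop basickan}(10) applied to $0\to A\to A\oplus B\to B\to 0$, so $F:=A\oplus B$ admits a maximal noise subfunctor $F[\cS_\epsilon]\subseteq F$. The two structure inclusions $A\hookrightarrow F$ and $B\hookrightarrow F$ have source in $\cS_\epsilon$, so by the universal property of $F[\cS_\epsilon]$ they both factor through $F[\cS_\epsilon]$; hence $A+B\subseteq F[\cS_\epsilon]\subseteq F$. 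Since $A+B=F$ inside $F=A\oplus B$, this gives $F[\cS_\epsilon]=A\oplus B$, which lies in $\cS_\epsilon$ by construction and is compact, that is, $A\oplus B\in\cS_\epsilon^c$.

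For the converse I would assume $\cS_\epsilon^c$ is closed under direct sums, fix a tame and compact $F$, and work with the poset $\mathcal P$ of all subfunctors of $F$ belonging to $\cS_\epsilon$, ordered by inclusion; by the first paragraph $\mathcal P\subseteq\cS_\epsilon^c$. The key step is to check that $\mathcal P$ is directed: given $G_1,G_2\in\mathcal P$, the natural transformation $\phi\colon G_1\oplus G_2\to F$ built from the two inclusions has tame kernel (Proposition~\ref{prop kercokertoam}), so $G_1+G_2=\text{\rm im}(\phi)$ is isomorphic to $\text{\rm coker}(\ker(\phi)\hookrightarrow G_1\oplus G_2)$, hence is tame, and being a quotient of $G_1\oplus G_2\in\cS_\epsilon^c\subseteq\cS_\epsilon$ it belongs to $\cS_\epsilon$; thus $G_1+G_2\in\mathcal P$ dominates both $G_i$. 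As recalled at the start of Section~\ref{sec directsums}, the compactness of $F$ and the Kuratowski-Zorn lemma provide a maximal element $G_{\max}\in\mathcal P$; in a directed poset a maximal element is a top element (any $G\in\mathcal P$ lies under a common upper bound of $G$ and $G_{\max}$, which by maximality must equal $G_{\max}$), so $G_{\max}$ contains every member of $\mathcal P$. Setting $F[\cS_\epsilon]:=G_{\max}$ then completes the proof: it lies in $\cS_\epsilon$, and for any $G\in\cS_\epsilon$ and any $\psi\colon G\to F$ the subfunctor $\text{\rm im}(\psi)\subseteq F$ is tame (again of the form $\text{\rm coker}(\ker(\psi)\hookrightarrow G)$) and a quotient of $G$, hence lies in $\mathcal P$ and therefore in $G_{\max}=F[\cS_\epsilon]$, so $\psi$ factors through $F[\cS_\epsilon]\subseteq F$.

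The step that I expect to require the most care is the directedness of $\mathcal P$: one must make sure that $G_1\oplus G_2$ is genuinely tame \emph{and} compact so that the closure hypothesis on $\cS_\epsilon^c$ applies, and that the short exact sequence $0\to\ker(\phi)\to G_1\oplus G_2\to G_1+G_2\to 0$ really lives inside $\text{\rm Tame}(\QQ^r,\text{\rm Vect}_K)$ so that the ``quotient'' part of the noise axioms is available — both points resting on Propositions~\ref{prop kercokertoam} and~\ref{prop basickan}. The bare existence of a maximal element of $\mathcal P$ I would simply quote from the discussion opening Section~\ref{sec directsums}; the genuinely new content here is upgrading ``a maximal element'' to ``the top element''.
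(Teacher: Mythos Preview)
Your proof is correct and follows essentially the same route as the paper's. The key observation in both arguments is that, once $\cS_\epsilon^c$ is closed under direct sums, the sum $G_1+G_2\subseteq F$ of two subfunctors in $\cS_\epsilon$ again lies in $\cS_\epsilon$ (being a quotient of $G_1\oplus G_2$); the paper uses this to show that any two maximal elements coincide, while you use it to show the poset is directed and then deduce that a maximal element is a top element---these are the same idea phrased slightly differently, and your version has the minor advantage of making the universal property of $F[\cS_\epsilon]$ explicit rather than leaving it to the preamble of the section.
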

\begin{proof}
Assume  $\cS_{\epsilon}^c$ is closed under  direct sums.  Let   $F\colon\QQ^r\to\text{\rm Vect}_K$ be a tame and compact functor and 
 $G\subset F$ and $H\subset F$ be maximal, with respect to the inclusion, subfunctors such that
$G$ and $H$ are in $\cS_{\epsilon}^c$. Since   $G\oplus H$ is in  $\cS_{\epsilon}^c$, then so is its quotient
$G+H\subset F$. Using maximality of $G\subset F$ and $H\subset F$, we obtain equalities $G=G+H=H$.
We can conclude that there is a unique maximal subfunctor of $F$ that belongs to $\cS_{\epsilon}$.

Assume now that, for any tame and compact functor  $F\colon\QQ^r\to\text{\rm Vect}_K$,  there is a unique maximal  $F[\cS_{\epsilon}]\subset F$  such that $F[\cS_{\epsilon}]$ belongs to  $\cS_{\epsilon}$.
Let $G$ and $H$    be in  $\cS_{\epsilon}^c$. Consider $(G\oplus H)[\cS_{\epsilon}]$. By the maximality,
we have inclusions $G\subset (G\oplus H)[\cS_{\epsilon}]\supset H$, and thus  $G\oplus H=(G\oplus H)[\cS_{\epsilon}]$. The functor   $G\oplus H$ is therefore in $\cS_{\epsilon}^c$.
\end{proof}

\begin{cor}
Let $\{\cS_{\epsilon}\}_{\epsilon\in \QQ}$ be a noise system in $\text{\rm Tame}(\QQ^r,\text{\rm Vect}_K)$. Assume that  
the component  $\cS_{\epsilon}^c$ is closed under  direct sums. Then, for any tame and compact functors
$F,G\colon\QQ^r\to \text{\rm Vect}_K$, $F[\cS_\epsilon]\oplus G[\cS_\epsilon]=(F\oplus G)[\cS_\epsilon]$.
\end{cor}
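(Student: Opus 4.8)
The plan is to deduce this from Proposition~\ref{maximal noise} together with the universal property of the maximal noise subfunctor recorded just before it. First I would note that $F\oplus G$ is tame by Corollary~\ref{cor tamnessprop}(2) and compact by Proposition~\ref{prop basickan}(10) (apply it to the split exact sequence $0\to F\to F\oplus G\to G\to 0$). Since $\cS_{\epsilon}^c$ is closed under direct sums, Proposition~\ref{maximal noise} then guarantees that all three subfunctors $F[\cS_{\epsilon}]\subseteq F$, $G[\cS_{\epsilon}]\subseteq G$ and $(F\oplus G)[\cS_{\epsilon}]\subseteq F\oplus G$ exist. It remains to prove the two inclusions of subfunctors of $F\oplus G$.

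For the inclusion $F[\cS_{\epsilon}]\oplus G[\cS_{\epsilon}]\subseteq (F\oplus G)[\cS_{\epsilon}]$, observe that $F[\cS_{\epsilon}]\oplus G[\cS_{\epsilon}]$ is naturally a subfunctor of $F\oplus G$, and that it is a direct sum of two members of $\cS_{\epsilon}^c$, hence again lies in $\cS_{\epsilon}^c$ by the closure hypothesis. The maximality characterising $(F\oplus G)[\cS_{\epsilon}]$ then forces the inclusion.

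For the reverse inclusion I would use that $(F\oplus G)[\cS_{\epsilon}]$ belongs to $\cS_{\epsilon}$, so that the two composites of the inclusion $(F\oplus G)[\cS_{\epsilon}]\hookrightarrow F\oplus G$ with the projections $F\oplus G\to F$ and $F\oplus G\to G$ are natural transformations from a functor in $\cS_{\epsilon}$ into $F$ and into $G$. By the universal property these composites factor through $F[\cS_{\epsilon}]\subseteq F$ and $G[\cS_{\epsilon}]\subseteq G$ respectively. Since a morphism into the biproduct $F\oplus G$ is determined by its two projections, the inclusion $(F\oplus G)[\cS_{\epsilon}]\hookrightarrow F\oplus G$ therefore factors through $F[\cS_{\epsilon}]\oplus G[\cS_{\epsilon}]\hookrightarrow F\oplus G$; as the original morphism is a monomorphism, this exhibits $(F\oplus G)[\cS_{\epsilon}]$ as a subfunctor of $F[\cS_{\epsilon}]\oplus G[\cS_{\epsilon}]$. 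Combining the two inclusions gives the claimed equality.

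There is no real obstacle here; the argument is essentially bookkeeping, and the only points to be careful about are that all three maximal noise subfunctors are actually defined (which is exactly where the closure hypothesis and Proposition~\ref{maximal noise} enter) and that $F\oplus G$ is compact so that the universal property applies in the stated form. Alternatively one could argue via the bijections $\mathrm{Nat}(-,F\oplus G)\cong \mathrm{Nat}(-,F)\times\mathrm{Nat}(-,G)$ restricted to functors in $\cS_{\epsilon}$, but the factorisation argument above is more direct.
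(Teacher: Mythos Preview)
Your proof is correct. Both your argument and the paper's rest on the same universal property of $F[\cS_\epsilon]\subset F$ recorded just before Proposition~\ref{maximal noise}, but the packaging differs: the paper compresses everything into a single chain of bijections
\[
\text{Nat}(H,(F\oplus G)[\cS_\epsilon])=\text{Nat}(H,F\oplus G)=\text{Nat}(H,F)\oplus\text{Nat}(H,G)=\text{Nat}(H,F[\cS_\epsilon])\oplus\text{Nat}(H,G[\cS_\epsilon])=\text{Nat}(H,F[\cS_\epsilon]\oplus G[\cS_\epsilon])
\]
for $H\in\cS_\epsilon$, and concludes that the inclusion $F[\cS_\epsilon]\oplus G[\cS_\epsilon]\subset (F\oplus G)[\cS_\epsilon]$ is an isomorphism. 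You instead establish the two inclusions separately, which is a little longer but makes explicit where the closure hypothesis enters (the forward inclusion) and where the universal property is used (the reverse one). Amusingly, the alternative you sketch in your last sentence is precisely the paper's proof.
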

\begin{proof}
For any $H$ in $\cS_\epsilon$, we have a sequence of bijections induced by the appropriate inclusions:
\[\text{Nat}(H,(F\oplus G)[\cS_\epsilon])=\text{Nat}(H,F\oplus G)=\text{Nat}(H,F)\oplus \text{Nat}(H,G)=\]
\[=
\text{Nat}(H,F[\cS_\epsilon])\oplus \text{Nat}(H,G[\cS_\epsilon])=\text{Nat}(H,F[\cS_\epsilon]\oplus G[\cS_\epsilon])\]
This shows   $F[\cS_\epsilon]\oplus G[\cS_\epsilon]\subset (F\oplus G)[\cS_\epsilon]$
is an isomorphism.
\end{proof}

Based on the above proposition, we are going to look for noise systems whose  compact parts are closed under direct sums. The key example of such  a noise system  is the standard noise in a direction of a ray (see~\ref{pt sandal})
or a vector (see~\ref{pt stnoisedirvectors}).
More generally:

\begin{prop}\label{prop directsumstandnoisecone}
Let $V$ be a cone in $\QQ^r$ and $\{V_{\epsilon}\}_{\epsilon\in \QQ}$ be the standard noise in the direction of the cone $V$.  The following are equivalent:
\begin{enumerate}
\item The  collection $V_{\epsilon}$ (see~\ref{pt stannoisecone}) is closed under direct sums.
\item The  collection $V_{\epsilon}^c$ (see~\ref{pt stannoisecone}) is closed under direct sums.
\item For any  $w_1$ and  $w_2$  in the cone $V$ whose norm is $\epsilon$ ($||w_1||=||w_2||=\epsilon$), there is an element $w$ in $V$ of norm $\epsilon$ ($||w||=\epsilon$) such that $w_1\leq w$, $w_2\leq w$.
\end{enumerate}
\end{prop}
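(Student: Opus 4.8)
The plan is to prove the cycle of implications $(1)\Rightarrow(2)\Rightarrow(3)\Rightarrow(1)$. The implication $(1)\Rightarrow(2)$ is immediate: if $F,G\in V_\epsilon$ are compact then $F\oplus G\in V_\epsilon$ by $(1)$, while $F\oplus G$ is tame by Corollary~\ref{cor tamnessprop} and compact because the split exact sequence $0\to F\to F\oplus G\to G\to 0$ together with Proposition~\ref{prop basickan}(10) forces compactness of the middle term; hence $F\oplus G\in V_\epsilon^c$.

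For $(3)\Rightarrow(1)$, fix $F,G\in V_\epsilon$, an object $u\in\QQ^r$ and an element $(x,y)\in(F\oplus G)(u)=F(u)\oplus G(u)$. By definition of $V_\epsilon$ there are $w_1,w_2\in V$ with $\|w_1\|=\|w_2\|=\epsilon$ such that $x\in\ker\bigl(F(u\le u+w_1)\bigr)$ and $y\in\ker\bigl(G(u\le u+w_2)\bigr)$. Condition $(3)$ provides $w\in V$ with $\|w\|=\epsilon$, $w_1\le w$ and $w_2\le w$. Since elements of $V\subseteq\QQ^r$ have non-negative coordinates we have $u\le u+w_1\le u+w$, so $F(u\le u+w)$ factors as $F(u+w_1\le u+w)\circ F(u\le u+w_1)$ and therefore annihilates $x$; symmetrically $G(u\le u+w)$ annihilates $y$. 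Thus $(x,y)\in\ker\bigl((F\oplus G)(u\le u+w)\bigr)$, and $F\oplus G\in V_\epsilon$.

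The substance of the argument is $(2)\Rightarrow(3)$, and this is where I expect the real work to lie; I would prove it by contraposition. Suppose $(3)$ fails, so there are $w_1,w_2\in V$ with $\|w_1\|=\|w_2\|=\epsilon$ for which no $w\in V$ satisfies $\|w\|=\epsilon$, $w_1\le w$ and $w_2\le w$ (this forces $\epsilon>0$, since for $\epsilon=0$ one would have $w_1=w_2=0$ and $w=0$ would be a witness). Put $F:=[0,w_1)$ and $G:=[0,w_2)$ in the sense of Example~\ref{ex bar}; one checks directly that $[0,w_i)$ takes the value $K$, with identity transition maps, on $\{v\in\QQ^r\mid v\ngeq w_i\}$ and the value $0$ elsewhere. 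Each lies in $V_\epsilon^c$, because whenever $F(v)\ne 0$ we have $v+w_1\ge w_1$, so $F(v+w_1)=0$, $F(v\le v+w_1)$ is the zero map, and $w_1\in V$ has norm $\epsilon$; likewise for $G$. On the other hand $\epsilon>0$ gives $0\ngeq w_1$ and $0\ngeq w_2$, so $(F\oplus G)(0)=K\oplus K$, and for the element $(1,1)$ the equality $(F\oplus G)(0\le w)(1,1)=0$ is equivalent to $F(0\le w)(1)=0=G(0\le w)(1)$, i.e.\ to $w\ge w_1$ and $w\ge w_2$; by hypothesis no $w\in V$ with $\|w\|=\epsilon$ has this property, so $F\oplus G\notin V_\epsilon$ and in particular $V_\epsilon^c$ is not closed under direct sums. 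The only delicate points are the explicit description of the functors $[0,w_i)$ and the observation that their transition maps out of nonzero values are zero, which hold precisely because $w_i\in V$ has norm exactly $\epsilon$; tameness and compactness of these functors are recorded in Example~\ref{ex bar}.
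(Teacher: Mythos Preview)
Your proof is correct and follows essentially the same route as the paper: the implications $(1)\Rightarrow(2)$ and $(3)\Rightarrow(1)$ are handled identically, and for $(2)\Rightarrow(3)$ you use the same test functors $[0,w_1)$, $[0,w_2)$ and the diagonal element $(1,1)\in K\oplus K$ at the origin, only phrased contrapositively rather than directly. One small remark: your closing sentence slightly misattributes what depends on $\|w_i\|=\epsilon$ --- the vanishing of $F(v\le v+w_1)$ holds simply because $v+w_1\ge w_1$, while the norm condition is what makes $w_1$ an admissible witness for membership in $V_\epsilon$ --- but this does not affect the validity of the argument.
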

\begin{proof}
The implication (1)$\Rightarrow$(2) is clear. 
 Assume (2). We show (3). Let     $w_1$ and  $w_2$ be elements of   the cone $V$ such that $||w_1||=||w_2||=\epsilon$. Consider the functors  $[0,w_1)$ and $[0,w_2)$ (see~\ref{ex bar}).
Since they belong to $V_{\epsilon}^c$, then, by the assumption, so does their direct sum  $F:=[0,w_1)\oplus [0,w_2)$. Consider $x$ in
$F(0)=([0,w_1)\oplus [0,w_2))(0)=[0,w_1)(0)\oplus [0,w_2](0)=K\oplus K$ given by the diagonal element $(1,1)$.
As $F$ is in $V_{\epsilon}$, there is $w$ in $V$  such that $||w||=\epsilon$ and $x$ is in the kernel of 
$F(0\leq w)$.  This can happen only if $w_1\leq w$ and $w_2\leq w$. Thus $w$  is the desired element.


Assume (3).  We prove (1). Let $F$ and $G$ be in $V_{\epsilon}$. We need to show that $F\oplus G$ also belongs to $V_{\epsilon}$. Choose $(x,y)$ in $F(v)\oplus G(v)$. Let $w_x$ and $w_y$ be two elements in the cone $V$ of norm $\epsilon$ such that $x$ is in $\text{ker}(F(v\leq v+w_x))$ and $y$ is in  $\text{ker}(G(v\leq v+w_y))$.
By the assumption there is $w$ in $V$ of norm $\epsilon$  with $w_x\leq w$ and $w_y\leq w$.
Thus $x$ is in $\text{ker}(F(v\leq v+w))$ and $y$ in $\text{ker}(G(v\leq v+w))$. It follows that
$(x,y)$ is in $\text{ker}((F\oplus G)(v\leq v+w))$. As this happens for any $(x,y)$, the direct sum $F\oplus G$ belongs to $V_{\epsilon}$.
 \end{proof}
 
 Exactly the same argument as in the proof of~\ref{prop directsumstandnoisecone}, can be also applied to show
 an analogous statement   for the standard noise in the direction of a sequence of vectors in $\QQ^r$:
 
 \begin{prop}\label{sequence of vectors} Let $\V=\{v_1,\ldots, v_n\}$ be a sequence of vectors in $\QQ^r$
 and  $\{\V_{\epsilon}\}_{\epsilon\in \QQ}$ be the standard noise in the direction of the sequence $\V$.
 The following are equivalent:
 \begin{enumerate}
 \item  The  collection $\V_{\epsilon}$ (see~\ref{pt stnoisedirvectors}) is closed under direct sums.
\item The  collection $\V_{\epsilon}^c$ (see~\ref{pt stnoisedirvectors}) is closed under direct sums.
\item For any  $w_1$ and  $w_2$  in  $\text{\rm Cone}(\V)$ whose $\V$-norm is $\epsilon$ ($||w_1||_{\V}=||w_2||_{\V}=\epsilon$), there is an element $w$ in $V$ of $\V$-norm $\epsilon$ ($||w||_{\V}=\epsilon$) such that $w_1\leq w$, $w_2\leq w$.
\end{enumerate}
 \end{prop}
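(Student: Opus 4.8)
The plan is to mirror exactly the three-way equivalence proof given for the cone case in Proposition~\ref{prop directsumstandnoisecone}, replacing the ordinary norm $\|\cdot\|$ throughout by the $\V$-norm $\|\cdot\|_\V$ and replacing $V$ by $\mathrm{Cone}(\V)$. The implication $(1)\Rightarrow(2)$ is immediate, since $\V_\epsilon^c = \{F\in\V_\epsilon \mid F \text{ compact}\}$ and a direct sum of compact functors is compact (so if $\V_\epsilon$ is closed under direct sums, restricting to compact members stays closed).

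For $(2)\Rightarrow(3)$: given $w_1,w_2\in\mathrm{Cone}(\V)$ with $\|w_1\|_\V=\|w_2\|_\V=\epsilon$, form the bars $[0,w_1)$ and $[0,w_2)$ (see~\ref{ex bar}), each of which lies in $\V_\epsilon^c$ — indeed $[0,w_i)(v\le v+w_i)$ is the zero map and $\|w_i\|_\V = \epsilon$, so the defining condition of $\V_\epsilon$ is met at every $v$ with witness $w_i$. By hypothesis $F := [0,w_1)\oplus[0,w_2)$ is in $\V_\epsilon^c\subseteq\V_\epsilon$. Evaluating at $0$, the diagonal element $(1,1)\in K\oplus K = F(0)$ must lie in $\ker(F(0\le w))$ for some $w\in\mathrm{Cone}(\V)$ with $\|w\|_\V=\epsilon$; but $F(0\le w)$ kills $(1,1)$ only if it kills both coordinates, which forces $w_1\le w$ and $w_2\le w$. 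This $w$ is the required element.

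For $(3)\Rightarrow(1)$: let $F,G\in\V_\epsilon$ and take any $(x,y)\in F(v)\oplus G(v)$. Pick $w_x,w_y\in\mathrm{Cone}(\V)$ of $\V$-norm $\epsilon$ with $x\in\ker(F(v\le v+w_x))$ and $y\in\ker(G(v\le v+w_y))$. By $(3)$ there is $w\in\mathrm{Cone}(\V)$ with $\|w\|_\V=\epsilon$, $w_x\le w$, $w_y\le w$; then naturality (factoring $F(v\le v+w)$ through $F(v\le v+w_x)$, and similarly for $G$) gives $x\in\ker(F(v\le v+w))$, $y\in\ker(G(v\le v+w))$, hence $(x,y)\in\ker((F\oplus G)(v\le v+w))$. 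Since $v$ and $(x,y)$ were arbitrary, $F\oplus G\in\V_\epsilon$. The only place where anything specific to the $\V$-norm is needed is the closure of $\mathrm{Cone}(\V)$ under the operations used and the homogeneity/triangle properties $\|aw\|_\V = a\|w\|_\V$, $\|u+w\|_\V\le\|u\|_\V+\|w\|_\V$ established in~\ref{pt stnoisedirvectors}; I do not anticipate any genuine obstacle, as the argument is structurally identical to the cone case — the one point to be slightly careful about is that in $(2)\Rightarrow(3)$ the witness $w$ produced has $\V$-norm exactly $\epsilon$ (not merely $\le\epsilon$), which is built into the definition of $\V_\epsilon$, so no rescaling is needed there.
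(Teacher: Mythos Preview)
Your proposal is correct and takes essentially the same approach as the paper: the paper does not even write out a separate proof for this proposition, simply stating that ``exactly the same argument as in the proof of~\ref{prop directsumstandnoisecone} can be also applied,'' which is precisely what you have done by substituting $\|\cdot\|_{\V}$ for $\|\cdot\|$ and $\mathrm{Cone}(\V)$ for $V$ throughout.
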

 
We finish this section with:
\begin{prop}\label{prop sup} Let $\V=\{v_1,\ldots, v_n\}$ be a sequence of elements in $\QQ^r$.
\begin{enumerate}
\item   Let  $V=\text{\rm Cone}(v_1,\ldots, v_n)$ and  $L=\text{\rm Cone}(v_1+\cdots +v_n)$. If 
$||v_1||=\cdots=||v_n||=||v_1+\cdots+v_n||$, then $\{V_\epsilon\}_{\epsilon\in\QQ}=\{L_\epsilon\}_{\epsilon\in\QQ}$.
\item Let  $\W=\{v_1+\cdots +v_n\}$. If $v_1,\ldots, v_n$  are linearly independent as vectors over the field of rational numbers, then $\{\V_\epsilon\}_{\epsilon\in\QQ}=\{\W_\epsilon\}_{\epsilon\in\QQ}$.
\end{enumerate}
\end{prop}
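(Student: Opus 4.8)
Both statements compare a "standard noise system attached to the single vector" $s:=v_1+\cdots+v_n$ with one attached to several vectors, and in both cases the argument has the same shape. First I would isolate the elementary monotonicity fact that is used throughout: for a functor $F$, if $w\leq w'$ in $\QQ^r$ then $F(u\leq u+w')=F(u+w\leq u+w')\circ F(u\leq u+w)$, so $x\in\text{ker}\big(F(u\leq u+w)\big)$ implies $x\in\text{ker}\big(F(u\leq u+w')\big)$. Hence, in any standard noise system of the form~\ref{pt stannoisecone} or~\ref{pt stnoisedirvectors}, a witness $w$ for an element $x\in F(u)$ may always be replaced by any larger vector in the cone that still has the prescribed norm. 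The only real content is to produce, from a witness lying in the larger cone, a dominating witness lying in the smaller cone with the \emph{same} norm; the hypotheses are exactly what makes this possible.

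\textbf{Part (1).} Set $c:=\|v_1\|=\cdots=\|v_n\|=\|s\|$. Since all $v_i$ have non‑negative coordinates, the hypothesis forces either all $v_i=0$ (then $V=L=\{0\}$ and there is nothing to prove) or $c>0$; assume the latter, and note $\epsilon=0$ is trivial because $V_0=L_0$ consists only of the zero functor. As $s=v_1+\cdots+v_n\in V$ we have $L\subseteq V$, so $L_\epsilon\subseteq V_\epsilon$ for all $\epsilon$. For the reverse, take $F\in V_\epsilon$ and $x\in F(u)$ with witness $w=\sum_i a_iv_i\in V$, $a_i\geq0$, $\|w\|=\epsilon$, and put $a:=\max_i a_i$. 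I would then show $\|w\|=ac$: for the bound $\leq$, every coordinate satisfies $(w)_j=\sum_i a_i(v_i)_j\leq a\,(s)_j\leq ac$; for $\geq$, pick $i_0$ with $a_{i_0}=a$ and a coordinate $j_0$ with $(v_{i_0})_{j_0}=\|v_{i_0}\|=c$, so $(w)_{j_0}\geq a_{i_0}(v_{i_0})_{j_0}=ac$. Thus $a=\epsilon/c$, and $w=\sum_i a_iv_i\leq\sum_i a\,v_i=as=(\epsilon/c)s\in L$ with $\|(\epsilon/c)s\|=\epsilon$. By the monotonicity fact $(\epsilon/c)s$ is again a witness for $x$, so $F\in L_\epsilon$.

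\textbf{Part (2).} Linear independence of $v_1,\dots,v_n$ over $\QQ$ gives $s\neq0$ and makes $T(w)$ a singleton for every $w\in\text{Cone}(\V)$, so $\|w\|_{\V}=\max_i a_i$ for the unique non‑negative representation $w=\sum_i a_iv_i$; likewise $\|as\|_{\W}=a$ on $\text{Cone}(\W)$. Since $s=\sum_i v_i$ we have $\text{Cone}(\W)\subseteq\text{Cone}(\V)$, and a vector $as\in\text{Cone}(\W)$ has unique $\V$‑representation $(a,\dots,a)$, so the two norms agree on $\text{Cone}(\W)$; this yields $\W_\epsilon\subseteq\V_\epsilon$ at once. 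For $\V_\epsilon\subseteq\W_\epsilon$, take $F\in\V_\epsilon$, $x\in F(u)$, with witness $w=\sum_i a_iv_i$, $\max_i a_i=\epsilon$, and set $w':=\epsilon s=\sum_i\epsilon v_i\in\text{Cone}(\W)$. Then $w'-w=\sum_i(\epsilon-a_i)v_i$ has non‑negative coordinates, so $w\leq w'$, while $\|w'\|_{\W}=\epsilon$; by monotonicity $w'$ is a witness for $x$, hence $F\in\W_\epsilon$.

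\textbf{Main obstacle.} There is no serious obstacle; the one step requiring care is the identity $\|w\|=ac$ in Part (1), where the hypothesis $\|v_i\|=\|s\|$ must be used on \emph{both} sides of the estimate — the upper bound comes from $(s)_j\leq\|s\|$ and the lower bound from the existence of a coordinate where $v_{i_0}$ attains its norm. Once that is in place, the rest is bookkeeping with the definitions of the norm and $\V$‑norm (\ref{pt stannoisecone}, \ref{pt stnoisedirvectors}) and the harmless enlargement of witnesses in the partial order.
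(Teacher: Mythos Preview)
Your proof is correct and follows essentially the same route as the paper: in each part you show that any witness $w$ in the larger cone is dominated by the scaled sum $(\epsilon/c)s$ (respectively $\epsilon s$) lying in the ray, with the same norm, and then invoke the monotonicity of kernels. Your argument is in fact a bit more careful than the paper's, since you justify the key bound $a_i\le \epsilon/c$ in Part~(1) via the two-sided estimate $\|w\|=ac$ and you treat both inclusions explicitly, whereas the paper simply asserts that a norm-$\epsilon$ element of $V$ can be written with coefficients at most $\epsilon/\|v_1\|$ and only writes out the nontrivial inclusion.
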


\begin{proof}
\noindent
(1):\quad
Set $w:=(\epsilon/||v_1|| )(v_1+\cdots+v_n)$. By the assumption $||w||=\epsilon$. Let $u$ in $V$ be of norm $\epsilon$. Thus  $u$ can be written as $u=a_1 v_1+ \ldots + a_n v_n$ where $0\leq a_i \leq \epsilon/||v_1||$ for every $1\leq i\leq n$. It follows that $u\leq (\epsilon/||v_1|| )(v_1+\cdots+v_n)=w$.

Let $F$ be in $V_{\epsilon}$. This means that, for any $x$ in $F(v)$, there  exists $w_x$ in $V$ of norm $\epsilon$ such that $x$ is in the kernel of $F(v\leq v+ w_x).$  We have already shown that $w_x\leq w$.
It thus follows that $x$   is also in the kernel of $F(v\leq v+ w).$ As this is true for any $x$, 
$F(v\leq v+ w)$ is the zero map. This means $V_{\epsilon}=\text{Cone}(w)_{\epsilon}$
\medskip

\noindent
(2):\quad
Set $w=v_1+\cdots+v_n$. Since $ \{v_1,\ldots, v_n\}$ are linearly independent, $||\epsilon w||_{\V}=\epsilon$.
Let $u$  be in $\text{\rm Cone}(v_1,\ldots, v_n)$ of $\V$-norm $\epsilon$.  
Thus $u$ÃÂ  can be written as $u=a_1 v_1+ \ldots + a_n v_n$ where $0\leq a_i \leq \epsilon$.
It follows that $u\leq \epsilon (v_1+\cdots +v_n)=\epsilon w$. 

Let $F$ be in $\V_{\epsilon}$. This means that, for any $x$ in $F(v)$, there  exists $w_x$ in $V$ of $\V$-norm $\epsilon$ such that $x$ is in the kernel of $F(v\leq v+ w_x).$  We have already shown that $w_x\leq \epsilon w$.
It thus follows that $x$ is also in the kernel of $F(v\leq v+ \epsilon w).$ As this is true for any $x$, 
$F(v\leq v+ \epsilon w)$ is the zero map. This means $V_{\epsilon}=\W_{\epsilon}$.
 \end{proof}
 
The special case of~\ref{prop sup} we are most interested in is when the vectors  $v_1,\ldots,v_n$  are among the standard  vectors of $\QQ^r$ (see~\ref{point NRposets}).

\section{Topology on Tame functors}\label{sec topologytame}
In this section we describe how a noise system leads to a pseudo metric and hence a topology on the set of 
 tame functors with values in $ \text{\rm Vect}_K$. This metric can be used  to measure 
 how close or how far apart  tame  functors can be relative to the chosen noise system.
 Let us choose and fix a noise system $\{\cS_{\epsilon}\}_{\epsilon\in \QQ}$ in  $\text{Tame}(\QQ^r, \text{\rm Vect}_K)$.

We are going to compare tame functors  using natural transformations. Let
$\phi\colon F\to G$ be a natural transformation between functors in $\text{Tame}(\QQ^r, \text{\rm Vect}_K)$. 
We say that $\phi$ is an {\bf $\epsilon$-equivalence} if, there are $\tau$ and $\mu$  in $\QQ$ such that
$\tau+\mu\leq \epsilon$, $\text{\rm ker}(\phi)$ belongs to $\cS_{\tau}$ and $\text{\rm coker}(\phi)$ belongs to $\cS_{\mu}$. 
Before we define a pseudometric and a topology on the set of  tame functors, we need to  prove   two fundamental properties of
being an $\epsilon$-equivalence. The first one is the preservation of $\epsilon$-equivalences by  both push-outs and pull-backs:

\begin{prop}\label{prop pushpulleequiv}
Consider the following commutative square in  $\text{\rm Tame}(\QQ^r, \text{\rm Vect}_K)$:
\[\xymatrix@R=12pt@C=12pt{
 & H\dlto_-{\phi} \drto\\
 F\drto & & G\dlto^-{\phi'}\\
 & P
}\]
\begin{enumerate}
\item Assume that the square is a push-out. If $\phi$ is an $\epsilon$-equivalence, the same holds for $\phi'$.
\item  Assume that the square is a pull-back. If $\phi'$ is an $\epsilon$-equivalence, the same holds for $\phi$.
\end{enumerate}
\end{prop}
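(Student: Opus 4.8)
The plan is to exploit the well-known exact sequences relating the kernel and cokernel of $\phi$ to those of $\phi'$ in a push-out (resp. pull-back) square. Consider case (1), where the square is a push-out. First I would recall that in an abelian category a push-out square
\[\xymatrix@R=12pt@C=12pt{H\rto\dto & G\dto\\ F\rto & P}\]
induces a canonical isomorphism $\text{\rm coker}(H\to F)\xrightarrow{\ \sim\ }\text{\rm coker}(G\to P)$, i.e. $\text{\rm coker}(\phi)\cong\text{\rm coker}(\phi')$; this is a standard diagram chase, valid since by Corollary~\ref{cor tamnessprop} the category $\text{\rm Tame}(\QQ^r,\text{\rm Vect}_K)$ is abelian, so all the relevant kernels, cokernels and push-outs exist and stay tame. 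Hence if $\text{\rm coker}(\phi)\in\cS_\mu$ then $\text{\rm coker}(\phi')\in\cS_\mu$ as well, with the same $\mu$. For the kernel part, the subtlety is that in general $\text{\rm ker}(\phi')$ need not be isomorphic to $\text{\rm ker}(\phi)$; instead one has a natural epimorphism or a short exact sequence relating them. The cleanest route is: form the push-out, and observe that there is an exact sequence
\[0\to \text{\rm ker}(\phi)\to \text{\rm ker}(\phi')\to \text{\rm coker}(H\xrightarrow{\psi} G)\]
where $\psi$ is the other leg of the square, or dually that $\text{\rm ker}(\phi')$ is a quotient of $\text{\rm ker}(\phi)$ when $\psi$ is epi. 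In the general case I would instead use the \emph{whole} push-out square to build one long exact "Mayer–Vietoris" sequence
\[0\to \text{\rm ker}(\phi)\to \text{\rm ker}(\psi')\oplus(\cdots)\to\cdots,\]
but since we only need membership in a noise component, it is enough to exhibit $\text{\rm ker}(\phi')$ (up to the Serre-type closure properties of $\cS_\bullet$) as built by an extension from subquotients of $\text{\rm ker}(\phi)$ and $\text{\rm coker}(\phi)$.

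More concretely, the step I actually want is the following standard fact about a push-out square in an abelian category: there is an exact sequence
\[0\to \text{\rm ker}(\phi)\to \text{\rm ker}(\phi')\to \text{\rm coker}(\phi)\xrightarrow{=} \text{\rm coker}(\phi')\to 0,\]
where the middle map $\text{\rm ker}(\phi')\to\text{\rm coker}(\phi)$ is zero precisely when the square is also a pull-back. In general it need not be a pull-back, so what is true and sufficient is: $\text{\rm ker}(\phi')$ admits a subobject isomorphic to a quotient of $\text{\rm ker}(\phi)$ — in fact more is true, but here is where I would be careful. The honest statement, obtained by diagram chasing the push-out $P=(F\oplus G)/H$, is that $\text{\rm ker}(\phi')\cong \text{\rm ker}(\phi)$ whenever the map $H\to G$ is a monomorphism, and in the general case one reduces to that by replacing $H$ with its image; then $\text{\rm ker}(\phi')$ sits in an extension $0\to\text{\rm ker}(\phi)/(\text{something})\to\text{\rm ker}(\phi')\to(\text{subobject of }\text{\rm coker})\to 0$. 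Since $\cS_\tau$ is closed under subobjects and quotients (Serre condition) and additive in the sense of Definition~\ref{def noise}, any such extension of a subquotient of $\text{\rm ker}(\phi)\in\cS_\tau$ by a subquotient of $\text{\rm coker}(\phi)\in\cS_\mu$ lies in $\cS_{\tau+\mu}\subseteq\cS_\epsilon$. Feeding this back: $\text{\rm ker}(\phi')\in\cS_{\tau'}$ and $\text{\rm coker}(\phi')\in\cS_{\mu'}$ with $\tau'+\mu'\le\epsilon$, so $\phi'$ is an $\epsilon$-equivalence.

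For case (2), the pull-back square, I would simply dualize: a pull-back square in $\text{\rm Tame}(\QQ^r,\text{\rm Vect}_K)$ becomes a push-out square in the opposite category, which is again abelian, and the map $\phi$ there plays the role $\phi'$ played above; kernels and cokernels swap roles, but the $\epsilon$-equivalence condition is self-dual (it just asks $\text{\rm ker}$ and $\text{\rm coker}$ to be small with sizes adding up to at most $\epsilon$). Hence (2) follows from (1) by passing to $\text{\rm Tame}(\QQ^r,\text{\rm Vect}_K)^{\text{\rm op}}$ — with the caveat that one should note $\{\cS_\epsilon\}$ on the opposite category is again a noise system, which is immediate from the symmetry of the axioms in Definition~\ref{def noise} (they are phrased symmetrically in sub- and quotient objects). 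The main obstacle, and the place where real care is needed, is pinning down the exact short exact sequences relating $\text{\rm ker}(\phi),\text{\rm coker}(\phi)$ to $\text{\rm ker}(\phi'),\text{\rm coker}(\phi')$ in a general (not necessarily bicartesian) push-out square; once those are in hand, everything else is a formal application of the Serre and additivity properties of the noise system. If the clean isomorphism $\text{\rm coker}(\phi)\cong\text{\rm coker}(\phi')$ and an epimorphism $\text{\rm ker}(\phi)\twoheadrightarrow\text{\rm ker}(\phi')$ (which does hold for push-outs along arbitrary maps — this is the actual elementary fact) are invoked, then in fact $\tau'\le\tau$ and $\mu'=\mu$, so no additivity is even needed and the argument is short.
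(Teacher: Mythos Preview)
Your last sentence is the entire proof, and it is exactly what the paper does: for a push-out square one has $\text{\rm coker}(\phi)\cong\text{\rm coker}(\phi')$ and a natural \emph{epimorphism} $\text{\rm ker}(\phi)\twoheadrightarrow\text{\rm ker}(\phi')$ (concretely, $\text{\rm ker}(\phi')=\psi(\text{\rm ker}(\phi))$ where $\psi\colon H\to G$ is the other leg), so closure of each $\cS_\tau$ under quotients gives $\text{\rm ker}(\phi')\in\cS_\tau$ and $\text{\rm coker}(\phi')\in\cS_\mu$ with the very same $\tau,\mu$. No additivity is needed. The paper further remarks that since push-outs in $\text{\rm Fun}(\QQ^r,\text{\rm Vect}_K)$ are computed objectwise, the two elementary facts reduce to a check in $\text{\rm Vect}_K$. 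Part (2) is then the dual statement, as you say.

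The middle of your proposal, however, should be cut: the ``exact sequence'' $0\to\text{\rm ker}(\phi)\to\text{\rm ker}(\phi')\to\text{\rm coker}(\phi)\to\text{\rm coker}(\phi')\to 0$ is not correct for a push-out. The natural map between the kernels runs the other way (it is the surjection $\text{\rm ker}(\phi)\twoheadrightarrow\text{\rm ker}(\phi')$ just described), and there is no injection $\text{\rm ker}(\phi)\hookrightarrow\text{\rm ker}(\phi')$ in general. All of the discussion of extensions, Mayer--Vietoris, and invoking additivity of the noise system is therefore both unnecessary and built on a wrong premise. Go straight to the elementary fact you already identified at the end.
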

\begin{proof}
As the proofs of (1) and (2) are analogous, we present only a sketch of the argument for (1).  
We claim that the assumption that the square is a push-out implies that  $\text{\rm coker}(\phi)$ is isomorphic to $\text{\rm coker}(\phi^{\prime})$ and $\text{\rm ker}(\phi^{\prime})$ is a quotient of 
$\text{\rm ker}(\phi)$.
The proposition then clearly follows as a component of a noise system is closed under quotients.
As push-outs in the  functor category are formed object-wise, it is then enough to show the claim  in the category of
vector spaces
$\text{\rm Vect}_K$. This is left as an easy exercise. 
\end{proof}

The second  property of $\epsilon$-equivalences is additivity with respect to the scale $\epsilon$:
\begin{prop}\label{prop additiveequiv}
Let $\phi\colon F\to G$ and $\psi\colon G\to H$ be natural  transformations between functors in $\text{\rm Tame}(\QQ^r, \text{\rm Vect}_K)$.
 If $\phi$ is an $\epsilon_1$-equivalence, and $\psi$   is an $\epsilon_2$-equivalence, then $\psi\phi$ is 
 an $(\epsilon_1+\epsilon_2)$-equivalence.
\end{prop}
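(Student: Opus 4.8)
The plan is to bound the kernel and cokernel of the composite $\psi\phi$ in terms of those of $\phi$ and $\psi$, using the additivity axiom of a noise system together with standard diagram chases. Since $\phi$ is an $\epsilon_1$-equivalence there are $\tau_1,\mu_1$ with $\tau_1+\mu_1\le\epsilon_1$, $\ker(\phi)\in\cS_{\tau_1}$, $\operatorname{coker}(\phi)\in\cS_{\mu_1}$; similarly choose $\tau_2,\mu_2$ with $\tau_2+\mu_2\le\epsilon_2$, $\ker(\psi)\in\cS_{\tau_2}$, $\operatorname{coker}(\psi)\in\cS_{\mu_2}$. The goal is to produce $\tau,\mu$ with $\tau+\mu\le\epsilon_1+\epsilon_2$, $\ker(\psi\phi)\in\cS_\tau$, $\operatorname{coker}(\psi\phi)\in\cS_\mu$; the natural guess is $\tau=\tau_1+\tau_2$ and $\mu=\mu_1+\mu_2$.

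For the kernel, I would use the exact sequence
\[
0\to \ker(\phi)\to \ker(\psi\phi)\to \ker(\psi)
\]
obtained by restricting $\phi$ to $\ker(\psi\phi)$: its kernel is $\ker(\phi)$ and its image lies in $\ker(\psi)$, so $\ker(\psi\phi)/\ker(\phi)$ embeds in $\ker(\psi)$. All three terms are tame (kernels of maps of tame functors are tame by Proposition~\ref{prop kercokertoam}, and subfunctors of tame functors are tame by Corollary~\ref{cor tamnessprop}), so this is an exact sequence in $\text{\rm Tame}(\QQ^r,\text{\rm Vect}_K)$ once one truncates it to a short exact sequence $0\to\ker(\phi)\to\ker(\psi\phi)\to Q\to 0$ with $Q\subset\ker(\psi)$. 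Then $\ker(\phi)\in\cS_{\tau_1}$, and $Q\in\cS_{\tau_2}$ since it is a subfunctor of $\ker(\psi)\in\cS_{\tau_2}$ and noise systems are closed under subobjects; additivity gives $\ker(\psi\phi)\in\cS_{\tau_1+\tau_2}$.

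For the cokernel I would dualize: there is an exact sequence
\[
\operatorname{coker}(\phi)\to \operatorname{coker}(\psi\phi)\to \operatorname{coker}(\psi)\to 0,
\]
coming from the snake lemma applied to the obvious map between the defining presentations of $\operatorname{coker}(\phi)$ and $\operatorname{coker}(\psi\phi)$, or more concretely: the image of $\operatorname{coker}(\phi)\to\operatorname{coker}(\psi\phi)$ is a subfunctor $R$ with $\operatorname{coker}(\psi\phi)/R\cong\operatorname{coker}(\psi)$. Again everything is tame by Proposition~\ref{prop kercokertoam} and Corollary~\ref{cor tamnessprop}. Now $R$ is a quotient of $\operatorname{coker}(\phi)\in\cS_{\mu_1}$, hence $R\in\cS_{\mu_1}$ since noise systems are closed under quotients; and $\operatorname{coker}(\psi)\in\cS_{\mu_2}$; additivity applied to $0\to R\to\operatorname{coker}(\psi\phi)\to\operatorname{coker}(\psi)\to 0$ gives $\operatorname{coker}(\psi\phi)\in\cS_{\mu_1+\mu_2}$. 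Combining, $\psi\phi$ is a $(\tau_1+\tau_2+\mu_1+\mu_2)$-equivalence, and $\tau_1+\tau_2+\mu_1+\mu_2=(\tau_1+\mu_1)+(\tau_2+\mu_2)\le\epsilon_1+\epsilon_2$, as required.

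The only genuinely non-routine point is producing the two three-term exact sequences and checking that the intermediate terms (the quotient $Q$ inside $\ker(\psi)$, the image $R$ of $\operatorname{coker}(\phi)$) are honest tame functors so that the noise-system axioms apply; this is a standard diagram chase in $\text{\rm Vect}_K$ done objectwise, combined with Proposition~\ref{prop kercokertoam} and Corollary~\ref{cor tamnessprop} to stay inside the abelian category $\text{\rm Tame}(\QQ^r,\text{\rm Vect}_K)$. Everything after that is a direct invocation of the "$F$ in $\cS_\epsilon$ and $H$ in $\cS_\tau$ implies $G$ in $\cS_{\epsilon+\tau}$" clause together with closure under subobjects and quotients.
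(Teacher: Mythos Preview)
Your argument is correct and essentially identical to the paper's: the paper writes down the same two exact sequences $0\to\ker(\phi)\to\ker(\psi\phi)\to\ker(\psi)$ and $\operatorname{coker}(\phi)\to\operatorname{coker}(\psi\phi)\to\operatorname{coker}(\psi)\to 0$, then applies closure under subobjects/quotients together with additivity exactly as you do, arriving at $\ker(\psi\phi)\in\cS_{\tau_1+\tau_2}$ and $\operatorname{coker}(\psi\phi)\in\cS_{\mu_1+\mu_2}$. One small correction: the parenthetical ``subfunctors of tame functors are tame by Corollary~\ref{cor tamnessprop}'' is not literally true in general; the correct reason your $Q$ and $R$ are tame is that each arises as the image of a natural transformation between tame functors, hence as a (co)kernel of such, which is covered by Proposition~\ref{prop kercokertoam}.
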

\begin{proof}
The key is to observe that the natural transformations $\phi$ and $\psi$ induce the following exact sequences:
\[
\xymatrix{
0\rto &\text{ker}(\phi)\rto & \text{ker}(\psi\phi)\rto & \text{ker}(\psi)
}
\]
\[
\xymatrix{
&\text{coker}(\phi)\rto & \text{coker}(\psi\phi)\rto & \text{coker}(\psi)\rto & 0
}
\]
Let $\tau_1$ and $\mu_1$ in $\QQ$ be such that  $\tau_1+\mu_1\leq \epsilon_1$ and 
$\text{ker}(\phi)$ belongs to $\cS_{\tau_1}$ and $\text{coker}(\phi)$ belongs to $\cS_{\mu_1}$.
Similarly let   $\tau_2$ and $\mu_2$ in $\QQ$ be such that  $\tau_2+\mu_2\leq \epsilon_2$ and 
$\text{ker}(\psi)$ belongs to $\cS_{\tau_2}$ and $\text{coker}(\psi)$ belongs to $\cS_{\mu_2}$.
Therefore, the image of $\text{ker}(\psi\phi)\to  \text{ker}(\psi)$, as a subfunctor in $\text{ker}(\psi)$, belongs to $\cS_{\tau_2}$ and the kernel of $\text{coker}(\psi\phi)\to \text{coker}(\psi)$, as a  quotient of $\text{coker}(\phi)$, belongs to 
$\cS_{\mu_1}$. We can then conclude that $\text{ker}(\psi\phi)$ belongs to $\cS_{\tau_1+\tau_2}$ and
$\text{coker}(\psi\phi)$ belongs to  $\cS_{\mu_1+\mu_2}$. Since 
$\tau_1+\tau_2+\mu_1+\mu_2\leq \epsilon_1+\epsilon_2$, the transformation 
 $\psi\phi$  is an
$(\epsilon_1+\epsilon_2)$-equivalence.
\end{proof}

We can use the above fundamental properties of $\epsilon$-equivalences to prove:

\begin{cor} \label{cor equivpushpull}
Let $F$ and $G$ be tame functors and $\tau$ and $\mu$ be  non-negative  rational numbers. Then the following statements are equivalent:
\begin{enumerate}
\item There are natural transformations $ F\leftarrow H:\!\phi$ and $\psi\colon H\to G$  such that
$\phi$ is a $\tau$-equivalence and $\psi$  is a $\mu$-equivalence.
\item There are natural transformations $\psi^\prime \colon F\to P\leftarrow G:\!\phi^\prime$ such that
$\phi^\prime$ is a $\tau$-equivalence and $\psi^\prime$  is a $\mu$-equivalence.
\end{enumerate}
\end{cor}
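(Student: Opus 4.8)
The plan is to derive this corollary formally from Proposition~\ref{prop pushpulleequiv}, using that $\text{\rm Tame}(\QQ^r,\text{\rm Vect}_K)$ is an abelian category (Corollary~\ref{cor tamnessprop}) and hence admits push-outs and pull-backs.

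For $(1)\Rightarrow(2)$ I would start from a span $F\xleftarrow{\phi} H\xrightarrow{\psi} G$ with $\phi$ a $\tau$-equivalence and $\psi$ a $\mu$-equivalence, and form the push-out $P$ of $\phi$ and $\psi$. This produces a commutative square with sides $\phi,\psi$ and new maps $\psi'\colon F\to P$ and $\phi'\colon G\to P$, and this square is simultaneously the push-out of $\phi$ along $\psi$ (so that $\phi'$ is the map parallel to $\phi$) and the push-out of $\psi$ along $\phi$ (so that $\psi'$ is the map parallel to $\psi$). Applying Proposition~\ref{prop pushpulleequiv}(1) the first way shows $\phi'$ is a $\tau$-equivalence; applying it the second way shows $\psi'$ is a $\mu$-equivalence. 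The co-span $F\xrightarrow{\psi'} P\xleftarrow{\phi'} G$ then witnesses statement (2).

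For $(2)\Rightarrow(1)$ I would dualize: given a co-span $F\xrightarrow{\psi'} P\xleftarrow{\phi'} G$ with $\phi'$ a $\tau$-equivalence and $\psi'$ a $\mu$-equivalence, form the pull-back $H$ of $\psi'$ and $\phi'$, obtaining maps $\phi\colon H\to F$ and $\psi\colon H\to G$. In the resulting pull-back square $\phi$ is the map parallel to $\phi'$ and $\psi$ is the map parallel to $\psi'$, so Proposition~\ref{prop pushpulleequiv}(2) makes $\phi$ a $\tau$-equivalence and $\psi$ a $\mu$-equivalence, giving the span in statement (1).

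The only points needing attention are bookkeeping rather than substance: one must check that the push-out and the pull-back are again tame, which is automatic since push-outs and pull-backs in $\text{\rm Fun}(\QQ^r,\text{\rm Vect}_K)$ are computed object-wise from finite direct sums and kernels/cokernels, and these preserve tameness by Corollary~\ref{cor tamnessprop} and Proposition~\ref{prop kercokertoam}; and one must correctly match, in each square, each old map with its parallel new map before invoking Proposition~\ref{prop pushpulleequiv}. I do not expect a genuine obstacle here — the statement is essentially a formal consequence of the preservation of $\epsilon$-equivalences under push-outs and pull-backs.
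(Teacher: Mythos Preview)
Your proposal is correct and follows essentially the same approach as the paper: form the push-out (respectively pull-back) of the given span (respectively co-span) and invoke Proposition~\ref{prop pushpulleequiv} to transfer the $\tau$- and $\mu$-equivalence properties to the new maps. Your write-up is in fact more explicit than the paper's, which draws the push-out square and cites Proposition~\ref{prop pushpulleequiv}(1) without spelling out that it must be applied once for each pair of parallel arrows, and which leaves the tameness of $P$ and $H$ implicit.
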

\begin{proof}
Assume (1) and apply~\ref{prop pushpulleequiv}.(1) to the following push-out square to get (2):
\[\xymatrix@R=12pt@C=12pt{
 & H\dlto_{\phi} \drto^{\psi}\\
 F\drto_-{\psi^\prime} & & G\dlto^-{\phi^\prime}\\
 & P
}\]

The opposite implication (2)$\Rightarrow$(1) follows from a similar argument by applying~\ref{prop pushpulleequiv}.(2)
to an appropriate pull-back square. 
\end{proof}

We are now ready for our key definition:

\begin{Def}\label{def eclose}
Let $\epsilon$ be in $\QQ$. 
Two tame functors $F$ and $G$ are $\epsilon$-{\bf close} if there are natural transformations 
 $ F\leftarrow H:\!\phi$ and $\psi\colon H\to G$  such that
$\phi$ is a $\tau$-equivalence, $\psi$  is a $\mu$-equivalence, and $\tau+\mu\leq \epsilon$. 
\end{Def}

Note that if  there is an $\epsilon$-equivalence $\phi\colon F\to G$ or an $\epsilon$-equivalence $\phi \colon G \to F$ then $F$ and $G$ are $\epsilon$-close.   

According to~\ref{cor equivpushpull}, two tame functors $F$ and $G$ are $\epsilon$-close if and only if
there are natural transformations $\psi^\prime\colon F\to P\leftarrow G:\!\phi^\prime$ such that
$\phi^\prime$ is a $\tau$-equivalence and $\psi^\prime$  is a $\mu$-equivalence and $\tau+\mu \leq \epsilon$.

It is  not true that  any  two tame functors are  $\epsilon$-close for some $\epsilon$ in $\QQ$. For example let $r=1$ and consider the standard noise in the direction of $\QQ$. The free functor
 $K(0,-) \colon \QQ \to \text{\rm Vect}_K$
 is not $\epsilon$-close to the zero functor for any $\epsilon$ in $\QQ$.
We say that two functors are {\bf close} if they are $\epsilon$-close for some $\epsilon$ in $\QQ$.

If $\cS_0$ contains a non-zero functor, then this functor is $0$-close to the zero functor. 
The intersection $\cap_{\epsilon>0}\cS_\epsilon$ consists of the functors which are $\epsilon$-close to the zero functor for any $\epsilon>0$. If this intersection contains only the zero functor, then a natural transformation
is an isomorphism if and only if it is an $\epsilon$-equivalence for any $\epsilon>0$. In particular two functor  are
isomorphic if and only if they are $\epsilon$-close for any  $\epsilon>0$.

Being $\epsilon$-close is a reflexive and symmetric relation. It is however  not transitive in general. Instead  it is additive with respect to the scale $\epsilon.$
\begin{prop}\label{prop triangular}
Let $F,G,H\colon\QQ^r\to\text{\rm Vect}_K$ be tame functors. If $F$ is $\epsilon_1$-close to $G$ and $G$ is $\epsilon_2$-close to $H$, then $F$ is $(\epsilon_1+\epsilon_2)$-close to $H$.
\end{prop}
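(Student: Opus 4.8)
The plan is to unravel the definitions and then compose the two witnessing zig-zags, relying on Corollary~\ref{cor equivpushpull} to replace one of them by a co-span so that the pieces can be glued. Suppose $F$ is $\epsilon_1$-close to $G$, so there is $F\xleftarrow{\phi_1} H_1 \xrightarrow{\psi_1} G$ with $\phi_1$ a $\tau_1$-equivalence, $\psi_1$ a $\mu_1$-equivalence, and $\tau_1+\mu_1\leq\epsilon_1$; similarly $G\xleftarrow{\phi_2} H_2 \xrightarrow{\psi_2} H$ with $\phi_2$ a $\tau_2$-equivalence, $\psi_2$ a $\mu_2$-equivalence, $\tau_2+\mu_2\leq\epsilon_2$. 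The obstacle is that these two zig-zags meet at $G$ pointing the wrong way: we have $H_1\to G$ and $H_2\to G$, i.e. a co-span over $G$, but to chain them we want a span. So first I would form the pull-back $P$ of $\psi_1\colon H_1\to G$ and $\phi_2\colon H_2\to G$, giving projections $p_1\colon P\to H_1$ and $p_2\colon P\to H_2$. Note $P$ is tame by Corollary~\ref{cor tamnessprop} (it is a kernel inside $H_1\oplus H_2$).

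Next I would control the equivalence-degrees of the projections out of $P$. By Proposition~\ref{prop pushpulleequiv}(2), since the square is a pull-back and $\phi_2$ is a $\tau_2$-equivalence, the parallel map $p_1\colon P\to H_1$ is also a $\tau_2$-equivalence; likewise, since $\psi_1$ is a $\mu_1$-equivalence, $p_2\colon P\to H_2$ is a $\mu_1$-equivalence. Now I compose: $\phi_1 p_1\colon P\to F$ is, by additivity of equivalences (Proposition~\ref{prop additiveequiv}), a $(\tau_1+\tau_2)$-equivalence, and $\psi_2 p_2\colon P\to H$ is a $(\mu_1+\mu_2)$-equivalence. This exhibits a span $F \xleftarrow{\phi_1 p_1} P \xrightarrow{\psi_2 p_2} H$ with the first leg a $(\tau_1+\tau_2)$-equivalence and the second a $(\mu_1+\mu_2)$-equivalence.

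Finally, since $(\tau_1+\tau_2)+(\mu_1+\mu_2)=(\tau_1+\mu_1)+(\tau_2+\mu_2)\leq \epsilon_1+\epsilon_2$, Definition~\ref{def eclose} is satisfied with this span, so $F$ is $(\epsilon_1+\epsilon_2)$-close to $H$, as claimed. The only genuinely substantive ingredient is the pull-back compatibility in Proposition~\ref{prop pushpulleequiv}(2) together with the additivity of Proposition~\ref{prop additiveequiv}; everything else is bookkeeping of the rational bounds $\tau_i,\mu_i$. One mild alternative, avoiding the pull-back, would be to first invoke Corollary~\ref{cor equivpushpull} to convert the second zig-zag into a co-span $G\xrightarrow{\psi_2'} P'\xleftarrow{\phi_2'} H$, then push out along $\psi_1\colon H_1\to G$; but the pull-back route above is the most direct and I expect it to be the cleanest to write down.
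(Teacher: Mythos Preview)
Your proof is correct and is essentially identical to the paper's own argument: form the pull-back of $\psi_1$ and $\phi_2$, use Proposition~\ref{prop pushpulleequiv}(2) to control the equivalence-degrees of the projections, then compose and apply Proposition~\ref{prop additiveequiv}. The only differences are cosmetic (your $P,p_1,p_2,\tau_i,\mu_i$ versus the paper's $H_3,\phi_3,\psi_3,\alpha,\beta,\gamma,\delta$), and your explicit remark that $P$ is tame, which the paper leaves implicit.
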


\begin{proof}
If $F$ is $\epsilon_1$-close to $G$  then we have morphisms  $F\leftarrow H_1:\!\phi_1$ and $\psi_1\colon H_1\to G$  
where  $\phi_1$ is an $\alpha$-equivalence, $\psi_1$  is a $\beta$-equivalence, and $\alpha+\beta\leq \epsilon_1$. Similarly if $G$ is $\epsilon_2$-close to $H$ we have morphisms $G\leftarrow H_2:\!\phi_2$ and $\psi_2\colon H_2\to H$ where $\phi_2$ is a $\gamma$-equivalence, $\psi_2$  is a $\delta$-equivalence, and $\gamma+\delta\leq \epsilon_2$.
Consider the pull-back square:
\[\xymatrix@R=12pt@C=12pt{
& H_3 \ar[dr]^{\psi_3} \ar[dl]_{\phi_3} &  \\
H_1 \ar[dr]_{\psi_1}&  & H_2 \ar[dl]^{\phi_2} \\
&{G}  & 
}\] 
From Proposition {\ref{prop pushpulleequiv}} it follows that $\phi_3$ is a $\gamma$-equivalence and $\psi_3$ is a $\beta$-equivalence.
The natural transformation $F\leftarrow H_3:\! \phi_1\phi_3$ is a $(\gamma+\alpha)$-equivalence and the natural transformation $\psi_2\psi_3\colon H_3\to H $ is a $(\beta+\delta)$-equivalence
by Proposition \ref{prop additiveequiv}. The claim follows from the fact that $\alpha+\beta+\gamma+\delta \leq \epsilon_1+\epsilon_2$. 
\end{proof}

We use the relation of being $\epsilon$-close to define a pseudometric (see~\ref{pt metric}) on tame functors:
\begin{Def}\label{def distance tame}
Let $F,G\colon\QQ^r\to\text{\rm Vect}_K$ be tame functors. If  $F$ and $G$ are  close we define $d(F,G):=
\text{inf}\{\epsilon \in\QQ  \ |\ \text{$F$ and $G$ are $\epsilon$-close}\}$ and  otherwise  $d(F,G):=\infty$.
\end{Def}

\begin{prop}\label{pseudodistance}
The function $d$, defined in \ref{def distance tame}, is an extended pseudometric on the set of tame functors with values in $\text{\rm Vect}_K$.
\end{prop}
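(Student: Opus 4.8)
The plan is to verify directly the three defining properties of an extended pseudometric listed in~\ref{pt metric}, relying entirely on the bookkeeping about $\epsilon$-equivalences and $\epsilon$-closeness already established. The first observation is cheap: by construction $d(F,G)$ is either $\infty$ or the infimum of a non-empty subset of the non-negative rationals, so it automatically takes values in ${\mathbf R}_{\infty}$, and only the symmetry, reflexivity, and triangle axioms remain.

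For symmetry, I would note that the configuration witnessing that $F$ and $G$ are $\epsilon$-close in Definition~\ref{def eclose} — a span $F\leftarrow H:\!\phi$ and $\psi\colon H\to G$ with $\phi$ a $\tau$-equivalence, $\psi$ a $\mu$-equivalence, and $\tau+\mu\le\epsilon$ — is literally symmetric under exchanging the two legs and the roles of $\tau$ and $\mu$. Hence the set $\{\epsilon\in\QQ\mid F,G\text{ are }\epsilon\text{-close}\}$ coincides with $\{\epsilon\in\QQ\mid G,F\text{ are }\epsilon\text{-close}\}$; in particular $F$ is close to $G$ iff $G$ is close to $F$, and therefore $d(F,G)=d(G,F)$ in both the finite and the $\infty$ case. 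For the reflexivity axiom $d(F,F)=0$, I would use that $\text{id}\colon F\to F$ has zero kernel and zero cokernel, and the zero functor lies in $\cS_0$ because every component of a noise system contains it; thus $\text{id}$ is a $0$-equivalence, the span $F\leftarrow F:\!\text{id}$, $\text{id}\colon F\to F$ shows $F$ is $0$-close to $F$, and so $d(F,F)\le 0$, forcing equality since the index set is non-negative.

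The triangle inequality is where the actual content sits, and here Proposition~\ref{prop triangular} does essentially all of the work (it in turn rests on Propositions~\ref{prop pushpulleequiv} and~\ref{prop additiveequiv}). If $d(F,G)=\infty$ or $d(G,H)=\infty$ the inequality is vacuous, so I would assume both are finite, fix a real $\delta>0$, and use the definition of the infimum to pick $\epsilon_1,\epsilon_2\in\QQ$ with $F$ being $\epsilon_1$-close to $G$, $G$ being $\epsilon_2$-close to $H$, and $\epsilon_1<d(F,G)+\delta/2$, $\epsilon_2<d(G,H)+\delta/2$. Proposition~\ref{prop triangular} then yields that $F$ is $(\epsilon_1+\epsilon_2)$-close to $H$, whence $d(F,H)\le\epsilon_1+\epsilon_2<d(F,G)+d(G,H)+\delta$; letting $\delta\to 0$ finishes the argument.

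I do not expect a genuine obstacle here: the only point that needs a moment of care is the harmless one of extracting rational $\epsilon_1,\epsilon_2$ beating the infimum by a prescribed margin, which is immediate since $d$ was defined as an infimum over $\QQ$ and $\delta$ may itself be chosen rational. All the substantive work — preservation of $\epsilon$-equivalences under push-outs and pull-backs, and additivity of the scale under composition and under the triangular comparison — has already been done in the cited propositions, so this proof is pure assembly.
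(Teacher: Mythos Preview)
Your proposal is correct and follows essentially the same approach as the paper: symmetry from the symmetry of the $\epsilon$-close relation, reflexivity from the zero functor lying in every $\cS_\epsilon$, and the triangle inequality from Proposition~\ref{prop triangular}. Your version is simply more explicit about the infimum bookkeeping in the triangle inequality, which the paper leaves implicit.
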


\begin{proof}
 The symmetry $d(F,G)=d(G,F)$ follows from the fact that being
$\epsilon$-close is a symmetric relation. Since the zero functor is in $\cS_{\epsilon}$ for any $\epsilon$, then
 $d(F,F)=0$. The triangle inequality  $d(F,H)\leq d(F,G)+d(G,H)$  is a consequence of Proposition~\ref{prop triangular}. 
\end{proof}

Let $\tau>0$ be a positive real number and  $F$ and $G$ be tame functors. By definition, 
 $d(F,G)<\tau$ if and only if
$F$ and $G$ are $\epsilon$-close for some $\epsilon<\tau$. Thus the open ball $B(F,\tau)$, around $F$ with radius $\tau$
(see~\ref{pt metric}),  consists  of all
tame functors which are $\epsilon$-close to $F$ for some $\epsilon<\tau$. These sets  form  a base of the topology
induced by  the pseudometric defined in~\ref{def distance tame}.

For the standard noise in the direction of full cone $\QQ^r$, the pseudometric defined above 
is related to the interleaving pseudometric introduced by M.\ Lesnick in~\cite{Interleaving Multi}, 
 and for the case of persistent homology $(r=1)$ in  \cite{Interleaving}. 
This work in fact  inspired us towards the formulation of our noise systems. For example one can show that if 
two functors are $\epsilon$-interleaved, then they are $6\epsilon$-close and vice versa if two functors are $\epsilon$-close, then they are $\epsilon$-interleaved.



%


\section{Feature Counting Invariant}\label{basic barcode}
%
%
In this section we describe a  pseudometric space of feature counting functions. This space is the range for our invariant
which we   call a feature counting invariant.  This invariant is a continuous function associated to a 
 noise system in $\text{Tame}(\QQ^r,\text{Vect}_{K})$. Its domain is  the space  of tame functors, and its range is the space of feature counting functions. Our aim in this section is to construct this  feature counting  invariant and show that it is $1$-Lipschitz. Throughout this section let us choose and fix a noise system  $\{\cS_{\epsilon}\}_{\epsilon \in \QQ}$ in 
  $\text{Tame}(\QQ^r,\text{Vect}_{K})$.  All the distances and neighbourhoods in  $\text{Tame}(\QQ^r,\text{Vect}_{K})$ are relative to this choice, as defined in Section~\ref{sec topologytame}.

By definition a {\bf feature counting function} is  a functor $f\colon\QQ\to \N^{\rm op}$.  
We write $f_t$ to denote the value of $f$ at $t$ in $\QQ$.  
Thus a feature counting function is simply a non-increasing sequence of natural numbers
 indexed by non-negative rational numbers $\QQ$.  
In particular  a feature counting function has only finitely many values. 
Note that the category $\text{Fun}(\QQ,\N^{\text{op}})$ is a poset, and there is a 
natural transformation between  $f\colon\QQ\to\N^{\text{op}}$ and $g\colon\QQ\to\N^{\text{op}}$ 
if and only if, for any $t$ in $\QQ$, $f_t\geq g_t$.

Let $\epsilon$ be in $\QQ$.  We say that two feature counting functions $f,g\colon\QQ\to\N^{\text{op}}$ are {\bf $\epsilon$-interleaved} if, for any $t$ in $\QQ$, $f_{t}\geq g_{t+\epsilon}$ and $g_{t}\geq f_{t+\epsilon}$ (this definition follows the definition given in \cite{generlized_interleavings}). 

It is not true  that any two feature counting functions are $\epsilon$-interleaved for some $\epsilon$. For example the constant feature counting functions
$0,1\colon \QQ\to\N^{\rm op}$ with values respectively $0$ and $1$, are not $\epsilon$-interleaved for any $\epsilon$.  
Two feature counting functions  $f$ and $g$ are called {\bf interleaved} if they are 
$\epsilon$-interleaved for some $\epsilon$.  Note that   $f$ and $g$ are 
$0$-interleaved if and only if $f=g$.  It is however not true that if $f$ and $g$ are $\epsilon$-interleaved
for any $\epsilon>0$, then $f=g$. For example, let:
\[f_t=\begin{cases} 1& \text{ if } 0\leq t<1\\
0& \text{ if } t\geq 1
\end{cases}\ \ \ \ 
g_t=\begin{cases} 1& \text{ if } 0\leq t\leq 1\\
0& \text{ if } t> 1
\end{cases}\]
Then, although $f\not=g$, the feature counting functions  $f$ and $g$ are $\epsilon$-interleaved
for any $\epsilon>0$.

Being $\epsilon$-interleaved is a reflexive and symmetric relation. However it is not transitive. Instead  it  is  additive 
 with respect to the scale: if $f$ and $g$ are $\tau$-interleaved  while $g$ and  $h$ are
$\mu$-interleaved, then $f$ and $h$ are $(\tau+\mu)$-interleaved.

We use the notion of being interleaved to define a pseudometric on the set of feature counting functions:
\begin{Def}\label{def basic}
Let $f,g\colon\QQ\to\N^{\text{op}}$ be feature counting functions. If $f$ and $g$ are interleaved we define $d(f,g):=\text{inf}\{\epsilon\ |\ \text{$f$ and $g$ are $\epsilon$-interleaved}\}$ and otherwise $d(f,g):=\infty$.
\end{Def}
The discussion before Definition~\ref{def basic} proves:
\begin{prop}\label{dist basic}
The distance $d$, defined in~\ref{def basic}, is an extended pseudometric on the set of feature counting functions.
\end{prop}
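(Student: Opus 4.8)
The plan is to check, one by one, the three defining conditions of an extended pseudometric listed in~\ref{pt metric}, extracting each from the three elementary properties of $\epsilon$-interleaving recorded in the paragraph just before Definition~\ref{def basic}: that being $\epsilon$-interleaved is reflexive (indeed $f$ is $0$-interleaved with itself), symmetric, and additive in the scale parameter (if $f,g$ are $\tau$-interleaved and $g,h$ are $\mu$-interleaved, then $f,h$ are $(\tau+\mu)$-interleaved). Since $d$ takes values in $\mathbf R_\infty$ and is defined by a case split on whether the two functions are interleaved at all, each verification will also have to confirm that the relevant ``interleaved / not interleaved'' alternatives match up, but this is automatic from symmetry of the interleaving relation.

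First I would dispatch symmetry and the diagonal condition, which require no work. For $d(f,g)=d(g,f)$: because ``$\epsilon$-interleaved'' is a symmetric relation, the set $\{\epsilon\in\QQ \mid f\text{ and }g\text{ are }\epsilon\text{-interleaved}\}$ coincides with the analogous set for the pair $(g,f)$; in particular $f,g$ are interleaved exactly when $g,f$ are, so $d(f,g)$ and $d(g,f)$ are finite or infinite together, and when finite are infima of the same subset of $\QQ$. For $d(f,f)=0$: reflexivity gives that $f$ is $0$-interleaved with itself, so $0$ belongs to the set over which the infimum defining $d(f,f)$ is taken, forcing the value $0$ (it is a nonnegative real number by construction).

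The one condition with any content is the triangle inequality $d(f,h)\le d(f,g)+d(g,h)$, and even here the argument is the standard ``turn a graded family of relations into a pseudometric'' move. If either $d(f,g)$ or $d(g,h)$ is $\infty$ the inequality is vacuous, so I would assume $f,g$ interleaved and $g,h$ interleaved. Given $\delta>0$, choose $\epsilon_1<d(f,g)+\delta/2$ with $f,g$ $\epsilon_1$-interleaved and $\epsilon_2<d(g,h)+\delta/2$ with $g,h$ $\epsilon_2$-interleaved; additivity of interleaving then makes $f,h$ be $(\epsilon_1+\epsilon_2)$-interleaved, and $\epsilon_1+\epsilon_2<d(f,g)+d(g,h)+\delta$, whence $d(f,h)\le d(f,g)+d(g,h)+\delta$. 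Letting $\delta\to 0$ finishes the proof. I do not expect a genuine obstacle: all three properties of $\epsilon$-interleaving needed have already been observed in the text, and the only thing to be careful about is the bookkeeping of the $\infty$ value and the fact that the infimum of a nonempty set of nonnegative rationals is a well-defined nonnegative real.
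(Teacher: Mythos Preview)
Your proposal is correct and is essentially what the paper does: the paper simply asserts that the proposition follows from the discussion preceding Definition~\ref{def basic} (reflexivity, symmetry, and additivity of $\epsilon$-interleaving), and you have spelled out exactly how those three facts yield the three pseudometric axioms, including the routine handling of the $\infty$ case in the triangle inequality.
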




For example, let (here $\pi$ denotes the length of the circle of diameter  $1$):
\[f_t=\begin{cases} 1& \text{ if } 0\leq t<\pi\\
0& \text{ if } t> \pi 
\end{cases} 
\]
Then $d(f,0)=\pi$, where $0$ denotes the constant function with value $0$.

Let $\tau>0$ be a positive real number and  $f$ and $g$ be feature counting functions. By definition, 
 $d(f,g)<\tau$ if and only if
$f$ and $g$ are $\epsilon$-interleaved for some $\epsilon<\tau$. Thus the open ball $B(f,\tau)$ around $f$ with radius $\tau$, consists of all
feature counting functions which are $\epsilon$-interleaved with $f$ for some $\epsilon<\tau$.  These sets form  a base of the topology
induced by  the pseudometric defined in~\ref{def basic}.


\medskip

Recall that we have chosen a noise system $\{\cS_{\epsilon}\}_{\epsilon \in \QQ}$ in 
  $\text{Tame}(\QQ^r,\text{Vect}_{K})$. Together with the rank (see \ref{point minimal}) we are going to use this noise system  to associate a feature counting function to a tame and compact functor. To make the association continuous, we  minimize the rank over the neighbourhoods $B(F,t)$ of a given  compact and  tame functor  $F:\QQ^r\rightarrow \text{\rm Vect}_K$. For $t$   in $\QQ$, define:
\[\text{bar}(F)_{t}:=\begin{cases}
\text{rank}(F) &\text{ if }t=0\\
\text{min} \{\text{rank}(G) \,|\, G \in B(F,t)\}&\text{ if }t>0
\end{cases}\]
Since $F$ is tame and compact, $\text{bar}(F)_{t}$ is a natural number. 
Note that $\text{rank}(F)\geq \text{bar}(F)_{t}$ for any $t$. Furthermore,
if $0<t\leq s$, then $B(F,t)\subset B(F,s)$ and hence $\text{bar}(F)_{t}\geq
\text{bar}(F)_{s}$.  Thus the association $t\mapsto \text{bar}(F)_{t}$ defines a functor
$\text{bar}(F)\colon \QQ \to \N^{\text{op}} $ which we call the {\bf feature counting invariant} of $F$ (with respect to  the noise system 
$\{\cS_{\epsilon}\}_{\epsilon \in \QQ}$).


It is important  to be aware that the feature counting invariant of $F$ depends on the choice of a noise system defining its neighbourhoods 
$B(F,t)$. 
The fundamental fact about the feature counting invariant is that it is   a $1$--Lipschitz  function (see \ref{pt metric}):

\begin{prop}\label{prop:lipschitz} Let 
$F,G:\QQ^r \rightarrow \text{\rm Vect}_K$ be tame and compact. Then:
\[d(\text{\rm bar}(F),\text{\rm bar}(G)) \leq d(F,G)\]
\end{prop}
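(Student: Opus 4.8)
The plan is to reduce the $1$-Lipschitz estimate to a containment of open balls in the pseudometric space of tame functors. If $F$ and $G$ are not close then $d(F,G)=\infty$ and there is nothing to prove, so I would assume they are close and fix an arbitrary $\epsilon\in\QQ$ with $d(F,G)<\epsilon$, with the goal of showing that the feature counting functions $\text{bar}(F)$ and $\text{bar}(G)$ are $\epsilon$-interleaved. Since $d(\text{bar}(F),\text{bar}(G))$ is by definition the infimum of the scales at which these two functions are interleaved, and $d(F,G)=\inf\{\epsilon\in\QQ \mid d(F,G)<\epsilon\}$, letting $\epsilon$ decrease to $d(F,G)$ then gives $d(\text{bar}(F),\text{bar}(G))\le d(F,G)$.

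So fix $\epsilon$ with $d(F,G)<\epsilon$. The key point is that, by the triangle inequality for the pseudometric $d$ on tame functors (Proposition~\ref{pseudodistance}) together with $d(G,F)=d(F,G)<\epsilon$, every tame $H$ with $d(F,H)<t$ satisfies $d(G,H)\le d(G,F)+d(F,H)<t+\epsilon$; that is, $B(F,t)\subseteq B(G,t+\epsilon)$ for every $t>0$, and in particular $F\in B(G,\epsilon)$. Now I would compare ranks. For $t>0$ the value $\text{bar}(G)_{t+\epsilon}$ is the minimum of $\text{rank}$ over $B(G,t+\epsilon)$, which contains the non-empty set $B(F,t)$, so $\text{bar}(G)_{t+\epsilon}\le \min\{\text{rank}(H)\mid H\in B(F,t)\}=\text{bar}(F)_t$. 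For $t=0$ we have $\text{bar}(F)_0=\text{rank}(F)$ and $F\in B(G,\epsilon)$ with $\epsilon>0$, hence $\text{bar}(G)_\epsilon\le \text{rank}(F)=\text{bar}(F)_0$. Thus $\text{bar}(F)_t\ge\text{bar}(G)_{t+\epsilon}$ for all $t\in\QQ$, and by the symmetric argument (interchanging $F$ and $G$) also $\text{bar}(G)_t\ge\text{bar}(F)_{t+\epsilon}$, so $\text{bar}(F)$ and $\text{bar}(G)$ are $\epsilon$-interleaved, as required.

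The argument is essentially formal once the ball containment is in place, and I do not expect a genuine obstacle. The one point that needs attention — and the reason for working with every $\epsilon>d(F,G)$ rather than with $\epsilon=d(F,G)$ directly — is the strict inequality in the definition of the open balls $B(-,t)$: it is precisely what lets us conclude $F\in B(G,\epsilon)$ in the boundary case $t=0$, where $\text{bar}(F)_0$ is defined as $\text{rank}(F)$ rather than as a minimum over a neighbourhood. The only other routine check is that the balls $B(F,t)$ are non-empty for $t>0$, which holds because $d(F,F)=0$, so the minima defining $\text{bar}$ are taken over non-empty sets of natural numbers and hence exist.
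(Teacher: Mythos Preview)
Your proof is correct and follows essentially the same strategy as the paper: establish the ball containment $B(F,t)\subseteq B(G,t+\epsilon)$ and read off the interleaving of the feature counting functions. The paper phrases the reduction as ``$\epsilon$-close implies $\epsilon$-interleaved'' and invokes Proposition~\ref{prop triangular} directly, whereas you work through the pseudometric triangle inequality of Proposition~\ref{pseudodistance} with a strict bound $d(F,G)<\epsilon$; your version is in fact a touch more careful about the boundary case $t=0$, which the paper's wording glosses over.
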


\begin{proof}
It is enough to show that, for any  $\epsilon$ in $\QQ $, if $F$ and $G$ are $\epsilon$-close then the corresponding feature counting invariants  $\text{bar}(F)$ and $\text{bar}(G)$ are $\epsilon$-interleaved.  
If $F$ and $G$ are $\epsilon$-close, by Proposition \ref{prop triangular}, any functor in $B(F,t)$ is $(t_0+\epsilon)$-close to $G$ for some $t_0 < t$.
This implies that $B(F,t)\subset B(G,t+\epsilon)$ and therefore $\text{bar}(F)_{t}\geq \text{bar}(G)_{t+\epsilon}$.
In the same way, if $F$ and $G$ are $\epsilon$-close, any functor in $B(G,t)$ is $(t_0+\epsilon)$-close to $F$, for some $t_0 < t$, and therefore $\text{bar}(G)_{t}\geq \text{bar}(F)_{t+\epsilon}$. As this happens for any $t$, we get that $\text{bar}(F)$ and $\text{bar}(G)$ are $\epsilon$-interleaved.
\end{proof}

Note that the minimal rank in the neighborhood $B(F,t)$ of a given functor $F$ can be obtained by non isomorphic functors, as can be seen in the following example.


\begin{example}\label{example mono}
Consider the compact and $1$-tame functor $F:\QQ^2\rightarrow \text{Vect}_K $ whose restriction to the sub-poset $\N^2\subset \QQ^2$ is described as follows. On the square $\{ v\leq (2,2)\}\subset \N^2$, $F$ is given by  the  commutative  diagram: 
 \[\xymatrix@R=12pt@C=12pt{
 K \rto^-{} & K \rto^-{} & K\\
 K\uto_-{}\rto^-{} & K\uto_-{} \rto^-{} & K \uto_{}\\
 0\rto\uto & K \uto_-{}\rto^-{} & K \uto_-{}
 }\]
 where the homomorphisms between non-zero entries are the identities.
For $w$ in $\N^2\setminus \{ v\leq (2,2)\}$, $F(\rm{meet}(w,(2,2))\leq w)$ is an isomorphism.
The feature counting invariant  associated to $F$, using the standard noise in the direction of the vector $(1,1)\in \QQ^2$, has values $\text{bar}(F)_t=2$, for $0\leq t \leq1$, and $\text{bar}(F)_t=1$ for every $t > 1$. The set $B(F,2)$ contains the following non isomorphic subfunctors of $F$ of rank one  $K((1,0),-),\, K((0,1),-)$ and $K((1,1),-)$. \end{example}

Computing the feature counting invariant of a functor $F\colon\QQ^r \rightarrow \text{\rm Vect}_K$ is in general not an easy task, since we do not have a formula or an algorithm which explicitly describes the sets $B(F,t)$ for $t$ in $\QQ$. One  strategy to calculate the value  $\text{\rm bar}(F)_{t}$ is to find proper subsets of 
the neighbourhood $B(F,t)$ where the minimal rank is achieved. Here is one such a subset.
Let $B'(F,t)$ be the collection of those tame functors $G$ for which there are natural transformations 
$F\leftarrow H:\!\phi$ and $\psi\colon H\to G$  such that $\phi$ is a $\tau$-equivalence and a {\em monomorphism},
$\psi $ is a $\mu$-equivalence, and $\tau+\mu< t$.  Then:

\begin{prop}\label{mono}
Let $F\colon\QQ^r\to \text{\rm Vect}_K$ be a tame and compact functor. Then:
\[\text{\rm bar}(F)_{t}=\text{\rm min} \{\text{\rm rank}(G) \,|\, G \in B(F,t)\}=
\text{\rm min} \{\text{\rm rank}(G) \,|\, G \in B'(F,t)\}\]
\end{prop}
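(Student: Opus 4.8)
The plan is to prove the two equalities of the proposition by establishing the chain of inequalities
\[
\text{min}\{\text{rank}(G)\mid G\in B(F,t)\}\ \leq\ \text{min}\{\text{rank}(G)\mid G\in B'(F,t)\}\ \leq\ \text{bar}(F)_t.
\]
The leftmost inequality is immediate once we check the inclusion $B'(F,t)\subseteq B(F,t)$: if $G$ is in $B'(F,t)$, then by definition there are $F\xleftarrow{\phi}H\xrightarrow{\psi}G$ with $\phi$ a $\tau$-equivalence (even a monomorphism) and $\psi$ a $\mu$-equivalence with $\tau+\mu<t$, so $F$ and $G$ are $(\tau+\mu)$-close with $\tau+\mu<t$, hence $G\in B(F,t)$. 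Since $\text{bar}(F)_t=\text{min}\{\text{rank}(G)\mid G\in B(F,t)\}$ for $t>0$ by definition, it then suffices to produce, for \emph{every} $G\in B(F,t)$, some $G'\in B'(F,t)$ with $\text{rank}(G')\leq \text{rank}(G)$; that will give $\text{min}_{B'(F,t)}\text{rank}\leq \text{min}_{B(F,t)}\text{rank}=\text{bar}(F)_t$, completing the cycle. (For $t=0$ the statement is trivial since $B(F,0)$ is understood to force $G\cong F$; I would treat that case separately or note the convention.)

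The heart of the argument is thus: given $F\xleftarrow{\phi}H\xrightarrow{\psi}G$ exhibiting $G\in B(F,t)$, replace $\phi$ by a monomorphism without increasing rank. First I would factor $\phi\colon H\to F$ through its image: let $H':=\text{im}(\phi)\subseteq F$, so $\phi$ factors as $H\twoheadrightarrow H'\hookrightarrow F$. By Proposition~\ref{prop kercokertoam}, $H'$ is tame (it is a subfunctor of the tame $F$, equivalently a quotient of the tame $H$), and it is compact by Corollary~\ref{cor tamnessprop} and Proposition~\ref{prop basickan}(10) applied to $0\to\ker(H\to H')\to H\to H'\to0$. The inclusion $\iota\colon H'\hookrightarrow F$ is a monomorphism; I claim it is a $\tau'$-equivalence with $\tau'\leq\tau$. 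Indeed $\ker(\iota)=0\in\cS_0$, and $\text{coker}(\iota)=\text{coker}(\phi)$ (same cokernel, since $H\twoheadrightarrow H'$ is epi), which lies in $\cS_\mu$ for some $\mu\leq\tau$; so $\iota$ is a $\tau$-equivalence via the pair $(0,\mu)$. Next I need a map $H'\to G$ that is a $\mu'$-equivalence with $\tau'+\mu'<t$. The naive composite $H'\to G$ does not exist on the nose, but I can form the pushout of $H\twoheadrightarrow H'$ and $\psi\colon H\to G$ to get $G':=H'\sqcup_H G$ with maps $H'\to G'$ and $G\to G'$; by Proposition~\ref{prop pushpulleequiv}(1), the map $H'\to G'$ is a $\mu$-equivalence (being the pushout of the $\mu$-equivalence $\psi$... — more precisely, since $H\twoheadrightarrow H'$ is epi, $G\to G'$ is epi and $H'\to G'$ inherits the equivalence data), and $G'$ is tame and compact. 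Then $F\xhookleftarrow{\iota}H'\to G'$ exhibits $G'\in B'(F,t)$, and we must check $\text{rank}(G')\leq\text{rank}(G)$: this holds because $G'$ is a quotient of $G$ (the pushout of an epimorphism $H\twoheadrightarrow H'$ along any map is an epimorphism $G\twoheadrightarrow G'$), and rank is non-increasing under quotients of finite-rank functors — a minimal cover of $G$ composed with $G\twoheadrightarrow G'$ gives an epimorphism from the same free functor onto $G'$, so $\text{rank}(G')\leq\text{rank}(G)$.

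The main obstacle I anticipate is the bookkeeping around which of the two maps to "straighten" and keeping the scales summing to something still $<t$. Working with the \emph{left} leg $\phi$ and factoring it through its image is the right move precisely because the cokernel — hence the relevant noise bound — is unchanged, so no scale is lost; the kernel only shrinks (to zero), which can only help. Replacing $G$ by the pushout $G'$ is then forced, and the rank bound comes for free from $G'$ being a quotient of $G$. One detail to be careful about is the strict inequality $\tau+\mu<t$ in the definition of $B'(F,t)$ versus the $\leq\epsilon$ with $\epsilon<t$ available from $G\in B(F,t)$: since $G\in B(F,t)$ means $G$ is $\epsilon$-close to $F$ for some $\epsilon<t$, we get witnesses with $\tau+\mu\leq\epsilon<t$, so indeed $\tau+\mu<t$, and the image-factorization does not disturb this. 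Assembling these observations yields $\text{min}_{B'(F,t)}\text{rank}\leq\text{min}_{B(F,t)}\text{rank}$, and combined with $B'(F,t)\subseteq B(F,t)$ the two minima coincide and both equal $\text{bar}(F)_t$.
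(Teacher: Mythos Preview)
Your proof is correct and follows essentially the same route as the paper: factor $\phi$ through its image $\text{Im}\,\phi\hookrightarrow F$ (which keeps the cokernel and hence the $\tau$-equivalence), push $\psi$ out along $H\twoheadrightarrow\text{Im}\,\phi$ to obtain a $\mu$-equivalence $\text{Im}\,\phi\to P$, and note that the resulting epimorphism $G\twoheadrightarrow P$ forces $\text{rank}(P)\leq\text{rank}(G)$. The only cosmetic slip is that you reuse the letter $\mu$ for a parameter bounding $\text{coker}(\phi)$ when $\mu$ already denotes the scale of $\psi$; renaming that local variable would avoid confusion.
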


\begin{proof}
Let $t$ be in $\QQ$ and $F\leftarrow H:\! \phi$ and $\psi\colon H\to G$ be natural transformations of tame functors such that $\phi$ is a $\tau$-equivalence, $\psi$ is a $\mu$-equivalence, $\tau+\mu<t$,  and $\text{bar}(F)_{t}=\text{rank}(G)$. Form the following push-out diagram:
\[\xymatrix@R=12pt@C=12pt{
& & H\ar@{->>}[dl]_{\phi} \drto^{\psi}\\
& \text{Im} \phi \drto_-{\psi^{\prime}}\ar@{^(->}[dl] & & G\ar@{->>}[dl]^-{\phi^{\prime}}\\
F && P
}\]
By Proposition \ref{prop pushpulleequiv} the morphism $\psi^{\prime}$ is a $\mu$-equivalence. 
Since $\phi$ is a $\tau$-equivalence, the same is true about the inclusion $\text{Im}\phi \hookrightarrow F$ and hence
$P$ belongs to $B(F,t)$.
 As $\phi^{\prime}$   is an epimorphism,  $\text{rank}(G)\geq \text{rank}(P)$. 
Therefore, by the minimality of the rank of $G$,  $\text{rank}(P)=\text{rank}(G)$.  
\end{proof}

\begin{cor}\label{zero rank}
Let $F\colon \QQ^r \rightarrow \text{\rm Vect}_K$ be a compact and tame functor and $t$ a positive rational number.  Then  $\text{\rm bar}(F)_{t}=0$ if and only if $F$ is contained in $\cS_{\epsilon}$ for some $\epsilon< t$.
\end{cor}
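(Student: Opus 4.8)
The plan is to prove the biconditional by showing each direction separately, using Proposition~\ref{mono} to reduce the ``only if'' direction to analysing rank-zero functors, and using the definition of $\epsilon$-closeness together with the axioms of a noise system for the ``if'' direction. The key observation that drives everything is that a tame and compact functor $G$ has $\text{rank}(G)=0$ if and only if $G$ is the zero functor: indeed, $\text{rank}(G)$ is the size of the $0$-Betti diagram of a minimal cover, and this is zero exactly when the minimal cover is $0\to G$, i.e. $G=0$.

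First I would handle the ``if'' direction. Suppose $F$ is contained in $\cS_{\epsilon}$ for some $\epsilon<t$. Then the zero morphism $F\to 0$ has kernel $F\in\cS_{\epsilon}$ and cokernel $0\in\cS_0$, so it is an $\epsilon$-equivalence; hence $F$ and $0$ are $\epsilon$-close, so the zero functor lies in $B(F,t)$. Since $\text{rank}(0)=0$, we get $\text{bar}(F)_t=0$.

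For the ``only if'' direction, suppose $\text{bar}(F)_t=0$. By Proposition~\ref{mono}, the minimum of $\text{rank}(G)$ over $G\in B'(F,t)$ equals $\text{bar}(F)_t=0$, so there is some $G\in B'(F,t)$ with $\text{rank}(G)=0$, i.e. $G=0$ by the observation above. Unwinding the definition of $B'(F,t)$: there are natural transformations $F\xleftarrow{\phi} H$ and $\psi\colon H\to G=0$ with $\phi$ a $\tau$-equivalence which is moreover a monomorphism, $\psi$ a $\mu$-equivalence, and $\tau+\mu<t$. Since $G=0$, the transformation $\psi$ is the zero map, so $\text{ker}(\psi)=H$; because $\psi$ is a $\mu$-equivalence, $H=\text{ker}(\psi)$ belongs to $\cS_{\mu'}$ for some $\mu'\le\mu$, hence to $\cS_{\mu}$. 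Now $\phi\colon H\to F$ is a monomorphism, so $F$ contains a subfunctor isomorphic to $H$; but I want $F$ itself to be small, which needs the cokernel. Since $\phi$ is a $\tau$-equivalence and a monomorphism, $\text{ker}(\phi)=0$ and $\text{coker}(\phi)\in\cS_{\tau}$ (more precisely in $\cS_{\tau'}$ with $\tau'\le\tau$). The exact sequence $0\to H\xrightarrow{\phi} F\to \text{coker}(\phi)\to 0$ then has $H\in\cS_{\mu}$ and $\text{coker}(\phi)\in\cS_{\tau}$, so by additivity of the noise system $F\in\cS_{\mu+\tau}$. Finally $\mu+\tau<t$ (note $\mu'+\tau'\le\mu+\tau<t$), which is exactly the required $\epsilon:=\mu+\tau<t$.

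The main obstacle, and the only point requiring care, is keeping track of the two splittings of the scales: the ``$\tau$-equivalence'' hypothesis only gives $\text{ker}(\phi)\in\cS_{\tau_1}$ and $\text{coker}(\phi)\in\cS_{\tau_2}$ with $\tau_1+\tau_2\le\tau$, and similarly for $\psi$; but since $\phi$ is a monomorphism we have $\tau_1=0$, so $\text{coker}(\phi)\in\cS_{\tau_2}\subseteq\cS_{\tau}$, and from $\psi$ being zero we have $H=\text{ker}(\psi)\in\cS_{\mu_1}\subseteq\cS_{\mu}$. Additivity then gives $F\in\cS_{\mu_1+\tau_2}\subseteq\cS_{\mu+\tau}$ with $\mu+\tau<t$. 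No genuinely hard estimate is involved; it is purely a matter of invoking Proposition~\ref{mono}, the noise-system axioms (monotonicity in $\epsilon$, closure under subfunctors and quotients, and additivity), and the triviality of rank-zero compact tame functors.
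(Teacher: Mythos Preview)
Your proof is correct and follows essentially the same route as the paper's: both directions use the same ideas (the zero functor witnesses $\text{bar}(F)_t=0$ for the ``if'' part, and Proposition~\ref{mono} plus additivity of the noise system for the ``only if'' part). The only cosmetic difference is that the paper invokes the morphism $0\to F$ rather than $F\to 0$ in the ``if'' direction, and it states the additivity step more tersely; your extra bookkeeping with the split scales $\tau_1,\tau_2,\mu_1,\mu_2$ is correct and makes explicit what the paper leaves implicit.
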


\begin{proof}
Let $0\colon \QQ^r \rightarrow \text{Vect}_K$ be the functor whose values are all zero.
If $F\colon \QQ^r \rightarrow \text{Vect}_K$ is in $\cS_{\epsilon}$ for $\epsilon< t$ then the morphism $0 \to F$ is an $\epsilon$-equivalence and therefore $0$ is contained in $B(F,t)$ and consequently $\text{\rm bar}(F)_{t}=0$.
On the other hand, if $\text{bar}(F)_{t}=0$, then  by Proposition~\ref{mono}, there is a monomorphism $F\hookleftarrow H:\!\phi$   such that $\phi$ is a $\tau$-equivalence, $H \to 0$ is a $\mu$-equivalence, and $\tau+\mu<t$.
By the additivity of noise systems, we can then conclude  $F$ is in $\cS_{\tau+\mu}$.
\end{proof}

Instead of trying to calculate the precise values of $\text{\rm bar}(F)$, one might try first to estimate them.
This can be done using the following propositions:
\begin{prop}\label{prop estimationsurj}
Let $F,F^\prime\colon \QQ^r \rightarrow \text{\rm Vect}_K$ be compact and tame functors.  If there is an epimorphism $\zeta \colon F\twoheadrightarrow  F^\prime$, then
$\text{\rm bar}(F)_t\geq \text{\rm bar}(F^\prime)_t$ for any $t$.
\end{prop}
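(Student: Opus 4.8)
The plan is to start from a rank-minimiser $G$ in the ball $B(F,t)$ and manufacture, using the epimorphism $\zeta$, a tame functor $G'$ lying in $B(F',t)$ with $\text{rank}(G')\le\text{rank}(G)$; since $\text{bar}(F')_t$ is by definition the minimal rank over $B(F',t)$, this gives $\text{bar}(F')_t\le\text{rank}(G)=\text{bar}(F)_t$. I would dispatch the case $t=0$ first, where it is immediate: $\text{bar}(F)_0=\text{rank}(F)$ and $\text{bar}(F')_0=\text{rank}(F')$, and the images under $\zeta$ of a minimal set of generators of $F$ generate $F'$, so $\text{rank}(F')\le\text{rank}(F)$ (see~\ref{point minimal}).

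Now fix $t>0$ and choose $G\in B(F,t)$ with $\text{rank}(G)=\text{bar}(F)_t$. Unwinding Definition~\ref{def eclose}, there is a zigzag $F\xleftarrow{\phi}H\xrightarrow{\psi}G$ of tame functors with $\phi$ a $\tau$-equivalence, $\psi$ a $\mu$-equivalence, and $\tau+\mu<t$; in particular there is $\mu_1\le\tau$ with $\text{coker}(\phi)\in\cS_{\mu_1}$. The key move is to transport the left leg along $\zeta$ by passing to an image. Set $H':=\text{Im}(\zeta\phi)\subseteq F'$, which is tame because $\text{Tame}(\QQ^r,\text{Vect}_K)$ is abelian (Corollary~\ref{cor tamnessprop}, Proposition~\ref{prop kercokertoam}); write $j\colon H\twoheadrightarrow H'$ for the induced surjection and $\iota\colon H'\hookrightarrow F'$ for the inclusion. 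Since $\zeta$ is onto, the cokernel $F'/H'=\text{coker}(\iota)$ is a quotient of $\text{coker}(\phi)=F/\text{Im}(\phi)$, hence lies in $\cS_{\mu_1}$; as $\iota$ is a monomorphism it is therefore a $\mu_1$-equivalence. Next I would form the pushout $G'$ of $G\xleftarrow{\psi}H\xrightarrow{j}H'$. Because $j$ is an epimorphism, the leg $G\to G'$ is an epimorphism, so $G'$ is a quotient of the compact tame functor $G$, hence itself compact and tame (Proposition~\ref{prop basickan}(10), Proposition~\ref{prop kercokertoam}) with $\text{rank}(G')\le\text{rank}(G)$; the other leg $H'\to G'$ is the pushout of $\psi$, hence a $\mu$-equivalence by Proposition~\ref{prop pushpulleequiv}(1). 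The zigzag $F'\xleftarrow{\iota}H'\to G'$ then exhibits $F'$ and $G'$ as $(\mu_1+\mu)$-close, and $\mu_1+\mu\le\tau+\mu<t$, so $G'\in B(F',t)$ and $\text{bar}(F')_t\le\text{rank}(G')\le\text{rank}(G)=\text{bar}(F)_t$, as desired.

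The substance of the argument is just deciding where to use $\zeta$, and the main point to get right — the reason for replacing the composite $\zeta\phi$ by its image $H'$ rather than composing $\phi$ with $\zeta$ outright — is that $\zeta$ is controlled only on the cokernel side: $\ker(\zeta\phi)$ is genuinely uncontrolled (it contains $H\cap\ker\zeta$, about which nothing is known), but this becomes irrelevant once the left leg is a monomorphism into $F'$, since then only $\text{coker}(\phi)\in\cS_{\mu_1}$ enters. The remaining steps are routine bookkeeping: checking that images, cokernels, and pushouts of tame functors stay tame, that a quotient of a compact tame functor is compact tame of no larger rank, and the elementary estimate $\mu_1+\mu\le\tau+\mu<t$.
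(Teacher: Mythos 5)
Your proof is correct and follows essentially the same route as the paper: both pass to $H'=\text{Im}(\zeta\phi)$, observe that $F'/H'$ is a quotient of $\text{coker}(\phi)$ so the inclusion $H'\subset F'$ inherits the left-leg control, push out $\psi$ along $H\twoheadrightarrow H'$ to get a quotient $G'$ of $G$ in $B(F',t)$, and conclude by minimality of rank. Your remark isolating why one must take the image (the kernel of $\zeta\phi$ being uncontrolled) is a nice explicit articulation of the point the paper's diagram encodes implicitly.
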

\begin{proof}
Let $t$ be in $\QQ$. Let $F\leftarrow H:\! \phi$ and $\psi\colon H\to G$ be natural transformations of tame functors such that $\phi$ is a $\tau$-equivalence, $\psi$ is a $\mu$-equivalence, $\tau+\mu<t$,  and $\text{bar}(F)_{t}=\text{rank}(G)$. Form the following commutative diagram, where the square containing $\xi$ and $\psi$ is a push-out:
\[\xymatrix@R=15pt@C=15pt{
F\ar@{->>}[d]_-{\zeta} & H\lto_{\phi}\rto^{\psi}\ar@{->>}[d]   & G\ar@{->>}[d] \\
F^{\prime} & \text{Im}(\zeta\phi)\rto^-{\xi}\ar@{_(->}[l] & G^\prime
}\]
By Proposition \ref{prop pushpulleequiv} the morphism $\xi$ is a $\mu$-equivalence. 
Since $\zeta$ is an epimorphism, it induces an epimorphism between $\text{coker}(\phi)$ and
the quotient $F^{\prime}/\text{Im}(\zeta\phi)$. It then follows that the inclusion $\text{Im}(\zeta\phi)\subset F^{\prime}$ is a $\tau$-equivalence. This means that $G^\prime$ belongs to $B(F^\prime,t)$. As $G^\prime$ is a quotient of
$G$, then $\text{rank}(G)\geq \text{rank}(G^{\prime})$ proving  the inequality
$\text{\rm bar}(F)_t\geq \text{\rm bar}(F^\prime)_t$.
\end{proof}

\begin{cor}\label{cor directsum}
Let $F,F^\prime\colon \QQ^r \rightarrow \text{\rm Vect}_K$ be compact and tame functors. Then
$\text{\rm bar}(F\oplus F^\prime)_t\geq \text{\rm max}\{\text{\rm bar}(F)_t, \text{\rm bar}(F^\prime)_t\}$ for any
$t$ in $\QQ$. If the components of the noise system $\{\cS_{\epsilon}\}_{\epsilon \in \QQ}$ are closed under 
direct sums (see Section~\ref{noise}), then
$\text{\rm bar}(F)_t+\text{\rm bar}(F^\prime)_t\geq \text{\rm bar}(F\oplus F^\prime)_t$ for any
$t$ in $\QQ$.
\end{cor}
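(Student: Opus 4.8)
The first inequality $\text{bar}(F\oplus F')_t\geq\max\{\text{bar}(F)_t,\text{bar}(F')_t\}$ is immediate from Proposition~\ref{prop estimationsurj}: the two projections $F\oplus F'\twoheadrightarrow F$ and $F\oplus F'\twoheadrightarrow F'$ are epimorphisms, so $\text{bar}(F\oplus F')_t\geq\text{bar}(F)_t$ and $\text{bar}(F\oplus F')_t\geq\text{bar}(F')_t$, and taking the maximum gives the claim. So the content is entirely in the reverse inequality $\text{bar}(F)_t+\text{bar}(F')_t\geq\text{bar}(F\oplus F')_t$ under the hypothesis that the components $\cS_\epsilon$ are closed under direct sums.

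For the reverse inequality I would argue as follows. Fix $t$ in $\QQ$. Using Proposition~\ref{mono} (or directly the definition), choose a witness for $\text{bar}(F)_t$: natural transformations $F\xleftarrow{\phi}H\xrightarrow{\psi}G$ with $\phi$ a $\tau$-equivalence, $\psi$ a $\mu$-equivalence, $\tau+\mu<t$, and $\text{rank}(G)=\text{bar}(F)_t$. Similarly pick $F'\xleftarrow{\phi'}H'\xrightarrow{\psi'}G'$ with $\phi'$ a $\tau'$-equivalence, $\psi'$ a $\mu'$-equivalence, $\tau'+\mu'<t$, and $\text{rank}(G')=\text{bar}(F')_t$. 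The plan is to take direct sums of these diagrams: $F\oplus F'\xleftarrow{\phi\oplus\phi'}H\oplus H'\xrightarrow{\psi\oplus\psi'}G\oplus G'$. The point where the hypothesis enters is in checking that $\phi\oplus\phi'$ is still a $(\max\{\tau,\tau'\})$-equivalence — or at worst a $(\tau+\tau')$-equivalence — and similarly for $\psi\oplus\psi'$. Indeed $\ker(\phi\oplus\phi')=\ker(\phi)\oplus\ker(\phi')$ and $\text{coker}(\phi\oplus\phi')=\text{coker}(\phi)\oplus\text{coker}(\phi')$, since kernels and cokernels in a functor category are computed object-wise and hence commute with finite direct sums. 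Now $\ker(\phi)\in\cS_\tau$ and $\ker(\phi')\in\cS_{\tau'}$, so with $\tau\le\tau'$ say we have $\ker(\phi)\in\cS_{\tau'}$ by monotonicity, and then closure under direct sums of $\cS_{\tau'}$ gives $\ker(\phi)\oplus\ker(\phi')\in\cS_{\tau'}$; the same reasoning handles the cokernels with $\cS_{\max\{\mu,\mu'\}}$. Hence $\phi\oplus\phi'$ is a $\max\{\tau,\tau'\}$-equivalence and $\psi\oplus\psi'$ is a $\max\{\mu,\mu'\}$-equivalence, and since $\max\{\tau,\tau'\}+\max\{\mu,\mu'\}\le\max\{\tau+\mu,\tau'+\mu'\}<t$ (or one may simply bound by $\tau+\mu+\tau'+\mu'$; either way one can absorb the slack), the functor $G\oplus G'$ lies in $B(F\oplus F',t)$. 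Therefore $\text{bar}(F\oplus F')_t\le\text{rank}(G\oplus G')=\text{rank}(G)+\text{rank}(G')=\text{bar}(F)_t+\text{bar}(F')_t$.

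The one genuinely delicate point — the main obstacle — is making sure the scale bookkeeping actually closes: the witnesses for $\text{bar}(F)_t$ and $\text{bar}(F')_t$ come with strict inequalities $\tau+\mu<t$ and $\tau'+\mu'<t$, and after taking direct sums we need the combined equivalence to still have total scale strictly below $t$. Using $\max$ on each side keeps us strictly below $t$; using naive additivity would not. This is exactly why the hypothesis ``closed under direct sums'' is needed and why one invokes it separately on the kernel side and the cokernel side rather than just on $G\oplus G'$. A small remark worth including: $\text{rank}(G\oplus G')=\text{rank}(G)+\text{rank}(G')$ follows because a minimal cover of $G\oplus G'$ is the direct sum of minimal covers of $G$ and $G'$ (Proposition~\ref{prop basickan} and the additivity of $\beta_0$ under direct sums), so ranks add. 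Assembling these observations gives the second inequality and completes the proof.
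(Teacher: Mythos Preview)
Your overall strategy matches the paper's: the paper also derives the first inequality from Proposition~\ref{prop estimationsurj} via the two projections, and for the second inequality it simply asserts that if $G\in B(F,t)$ and $G'\in B(F',t)$ then $G\oplus G'\in B(F\oplus F',t)$ when the noise components are closed under direct sums. Your proposal attempts to supply the details the paper omits, but there is a genuine gap in your scale bookkeeping.

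The inequality you invoke,
\[
\max\{\tau,\tau'\}+\max\{\mu,\mu'\}\ \le\ \max\{\tau+\mu,\ \tau'+\mu'\},
\]
is false: take $\tau=\mu'=1$ and $\tau'=\mu=0$; the left side is $2$ and the right side is $1$. The same difficulty sits one level deeper. To say that $\phi\oplus\phi'$ is a $\max\{\tau,\tau'\}$-equivalence you must exhibit $c_1,c_2$ with $c_1+c_2\le\max\{\tau,\tau'\}$ and $\ker(\phi\oplus\phi')\in\cS_{c_1}$, $\text{coker}(\phi\oplus\phi')\in\cS_{c_2}$. From $\ker\phi\in\cS_{a_1}$, $\text{coker}\phi\in\cS_{a_2}$ with $a_1+a_2\le\tau$ (and the primed analogues), closure under direct sums only gives $\ker(\phi\oplus\phi')\in\cS_{\max\{a_1,a_1'\}}$ and $\text{coker}(\phi\oplus\phi')\in\cS_{\max\{a_2,a_2'\}}$; but $\max\{a_1,a_1'\}+\max\{a_2,a_2'\}$ can exceed $\max\{a_1+a_2,\,a_1'+a_2'\}$ by the same counterexample pattern. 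Your fallback ``bound by $\tau+\mu+\tau'+\mu'$'' yields only a number $<2t$, not $<t$, so it does not close the argument either. There is also a slip in the write-up: the cokernel of $\phi\oplus\phi'$ is governed by the $\tau$-side data, not by $\mu,\mu'$.

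In short, the direct-sum-of-witnesses idea is exactly what the paper intends, and your treatment of the first inequality and of rank additivity is fine; but the sum-of-maxes estimate you lean on does not follow from the hypotheses as stated. The paper itself records the key fact without spelling out this step.
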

\begin{proof}
The first statement is a consequence of  Proposition~\ref{prop estimationsurj}. 
The second inequality, in the case the noise system is closed under direct sums, follows from the fact that
if $G$ and $G^{\prime}$ belong respectively to $B(F,t)$ and $B(F^\prime,t)$, then
$G\oplus G^\prime$ belongs to $B(F\oplus F^\prime,t)$.
\end{proof}

\section{The Feature counting Invariant for $r=1$}\label{one dimensional}
Let us choose a noise system $\{\cS_{\epsilon}\}_{\epsilon \in \QQ}$ in $\text{\rm Tame}(\QQ, \text{\rm Vect}_K)$.
Let $F\colon\QQ\to  \text{\rm Vect}_K$ be a tame and compact functor. 
 The main result of this section is Proposition~\ref{prop:bars1}, which holds for any noise system closed under direct sums. It states that  $\text{\rm bar}(F)_t$ counts the number of bars in the barcode (see~\cite{persistence}) of the quotient of $F$ by the maximal subfunctor that belongs to $\cS_{\epsilon}$. As in the classical case  the main tool in obtaining this result is the classification theorem of modules over a PID (see \ref{prop classification}). 
 
 Our first step is to show that to compute the value of $\text{\rm bar}(F)_{t}$ it is enough to only minimise the rank over  subfunctors of $F$. That is the reason we 
 define $B^{\prime\prime}(F,t)$ to be the collection of tame subfunctors $G\subset F$ for which this  inclusion is an $\epsilon$-equivalence,
 for some $\epsilon< t$ in $\QQ$.
\begin{prop}\label{barr=1}
Let $F\colon\QQ \to \text{\rm Vect}_K$ be a tame and compact functor. Then for any noise system in
$\text{\rm Tame}(\QQ, \text{\rm Vect}_K)$: 
\[\text{\rm bar}(F)_{t}=\text{\rm min} \{\text{\rm rank}(G) \,|\, G \in B(F,t)\}=
\text{\rm min} \{\text{\rm rank}(G) \,|\, G \in B^{\prime\prime} (F,t)\}\]
\end{prop}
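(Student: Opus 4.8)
The plan is to reduce the minimization over the full neighbourhood $B(F,t)$ first to the subset $B'(F,t)$ of Proposition~\ref{mono} (where the left leg $\phi\colon H\to F$ is a monomorphism), and then to push this further to $B''(F,t)$, the collection of tame subfunctors $G\subset F$ whose inclusion is an $\epsilon$-equivalence for some $\epsilon<t$. The inequality $\text{bar}(F)_t\le \text{min}\{\text{rank}(G)\mid G\in B''(F,t)\}$ is immediate, since any $G\in B''(F,t)$ is in particular $\epsilon$-close to $F$ for some $\epsilon<t$, hence lies in $B(F,t)$. So the content is the reverse inequality: given a witness $G$ achieving $\text{bar}(F)_t=\text{rank}(G)$, produce a subfunctor of $F$ of rank $\le\text{rank}(G)$ whose inclusion into $F$ is an $\epsilon$-equivalence for some $\epsilon<t$.

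First I would invoke Proposition~\ref{mono} to replace $G$ by some $P\in B'(F,t)$ with $\text{rank}(P)=\text{bar}(F)_t$; unwinding the definition, there are natural transformations $F\xleftarrow{\phi}H\xrightarrow{\psi}P$ with $\phi$ a $\tau$-equivalence \emph{and} a monomorphism, $\psi$ a $\mu$-equivalence, and $\tau+\mu<t$. Since $\phi$ is mono we may regard $H$ as a subfunctor of $F$; its inclusion is then a $\tau$-equivalence (its kernel is zero, its cokernel is $\text{coker}(\phi)\in\cS_\tau$), so $H\in B''(F,t)$. It remains only to bound $\text{rank}(H)$ by $\text{rank}(P)$. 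This is where the hypothesis $r=1$ and the classification theorem (Proposition~\ref{prop classification}, via Proposition~\ref{prop chartamer=1}) enter: $H$ and $P$ are compact tame functors indexed by $\QQ$, hence finite direct sums of bars $[w,u)$ and free pieces $K(v,-)$, and $\psi\colon H\to P$ is a $\mu$-equivalence, i.e. an isomorphism up to noise of controlled size. The key algebraic point is that for one-parameter compact tame functors, a $\mu$-equivalence cannot increase the rank: more precisely, I would argue that $\text{rank}$ of the source of a $\mu$-equivalence with compact target is at least that of the target — no, the direction we need is $\text{rank}(H)\le\text{rank}(P)$, which should follow because $\text{coker}(\psi)$ and $\text{ker}(\psi)$ both lie in small components of the noise system, and over $\QQ$ one shows that $\text{rank}(H)=\text{rank}(\text{Im}\,\psi)+\text{rank}(\text{ker}\,\psi)$-type exact-sequence bookkeeping combined with $\text{rank}(P)=\text{rank}(\text{Im}\,\psi)+\text{rank}(\text{coker}\,\psi)$.

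The cleanest route, and the one I would actually write, avoids delicate rank inequalities for arbitrary $\mu$-equivalences: instead, run the push-out argument of Proposition~\ref{mono} but then observe that in the one-dimensional setting we can additionally arrange the target to be a \emph{subfunctor}. Concretely, starting from the mono $H\hookrightarrow F$ produced above, decompose $H\cong\bigoplus_i [w_i,u_i)\oplus\bigoplus_j K(v_j,-)$; the inclusion $H\hookrightarrow F$ being a $\tau$-equivalence with $\tau<t$ already exhibits $H$ as an element of $B''(F,t)$ of rank $\text{rank}(H)$, and by Proposition~\ref{mono} $\text{rank}(H)$ need not a priori equal $\text{bar}(F)_t$ — but we only need $\text{rank}(H)\le \text{bar}(F)_t$. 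To get this, note $\psi\colon H\to P$ factors as $H\twoheadrightarrow\text{Im}\,\psi\hookrightarrow P$; since $\psi$ is a $\mu$-equivalence, $\text{ker}\,\psi\in\cS_{\tau'}$ and $\text{coker}\,\psi\in\cS_{\mu'}$ with $\tau'+\mu'\le\mu$. Replacing $H$ by the subfunctor $H':=\text{ker}(H\to P/(\text{maximal }\cS\text{-part}))$ — equivalently, peeling off from the direct-sum decomposition of $H$ exactly the summands landing in the noisy part — yields a subfunctor $H'\subset H\subset F$ with $\text{rank}(H')\le\text{rank}(P)=\text{bar}(F)_t$ and with $H'\hookrightarrow F$ still an $\epsilon$-equivalence for some $\epsilon<t$, using additivity of the noise system (Definition~\ref{def noise}) to combine the cokernel of $H'\hookrightarrow H$ (which is noise of size $\le\mu$) with that of $H\hookrightarrow F$ (size $\tau$). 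Thus $H'\in B''(F,t)$ and $\text{rank}(H')\le\text{bar}(F)_t$, giving the reverse inequality.

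The main obstacle is the step isolating the "noisy summands" of $H$ so that what remains is genuinely a \emph{subfunctor} of $F$ rather than merely a quotient or an abstract functor of small rank — i.e. making precise that the maximal $\cS_{\mu}$-subfunctor interacts well with the bar-decomposition. Here the full strength of the classification in $\text{Tame}(\QQ,\text{Vect}_K)$ (Proposition~\ref{prop chartamer=1}) together with the hypothesis that the noise system is closed under direct sums (so that Proposition~\ref{maximal noise} applies and $H[\cS_\mu]\subset H$ exists and is a direct summand-compatible subfunctor) is exactly what makes the argument go through; I would state this reliance explicitly, mirroring the way Proposition~\ref{prop:bars1} is flagged in the section's opening as holding "for any noise system closed under direct sums."
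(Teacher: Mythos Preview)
Your proposal has a genuine gap. The proposition is stated for \emph{any} noise system in $\text{Tame}(\QQ,\text{Vect}_K)$, but your argument ends up relying on the noise system being closed under direct sums (so that $H[\cS_\mu]$ exists, via Proposition~\ref{maximal noise}). You flag this yourself in the last paragraph, but it is an extra hypothesis not present in the statement; the direct-sum-closed case is exactly what is needed for Proposition~\ref{prop:bars1}, not for Proposition~\ref{barr=1}. Moreover, even granting that hypothesis, your construction of $H'$ is vague: it is not clear why ``peeling off the summands landing in the noisy part'' leaves something of rank $\le \text{rank}(P)$, nor why $H/H'$ is controlled by $\cS_\mu$.

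The missing idea is simpler and works for any noise system. Starting from $F\hookleftarrow H\xrightarrow{\psi} G$ with $\phi$ a monomorphic $\tau$-equivalence, $\psi$ a $\mu$-equivalence, $\tau+\mu<t$, and $\text{rank}(G)=n=\text{bar}(F)_t$, first replace $G$ by $\text{Im}(\psi)$ so that $\psi$ is an epimorphism; this is where $r=1$ enters, since $\text{Im}(\psi)\subset G$ implies $\text{rank}(\text{Im}(\psi))\le n$ (subfunctors of free functors over $\QQ$ are free, hence rank is monotone under inclusion). Now choose a minimal set of generators $g_1,\ldots,g_n$ for $G$, lift each $g_i$ along the epimorphism $\psi$ to some $h_i\in H$, and set $H':=\langle h_1,\ldots,h_n\rangle\subset H\subset F$. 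Then $\text{rank}(H')\le n$ automatically, and $H/H'$ is a quotient of $\ker(\psi)$ (because $H'$ already surjects onto $G$), so $H'\subset H$ is a $\mu$-equivalence and $H'\subset F$ is a $(\tau+\mu)$-equivalence by additivity. Thus $H'\in B''(F,t)$ with $\text{rank}(H')\le n$. No direct-sum hypothesis, no bar decomposition, no maximal-noise subfunctor is needed.
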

\begin{proof}
The key property of $\text{\rm Tame}(\QQ, \text{\rm Vect}_K)$  we use is  that,  for any subfunctor $G\subset F$,
$\text{rank}(G)\leq \text{rank}(F)$. This is a consequence of the fact that   in  $\text{\rm Tame}(\QQ, \text{\rm Vect}_K)$   any subfunctor of a free functor   is also free.
We can use this to get natural transformations
 $F\hookleftarrow H:\!\phi$ and $\psi\colon H\twoheadrightarrow G$  such that $\phi$ is an inclusion and a $\tau$-equivalence,
$\psi$  is an epimorphism and a $\mu$-equivalence, $\tau+\mu<t$, and $\text{rank}(G)=\text{\rm bar}(F)_{t}=:n$.
Note that $\psi$ can be assumed to be an epimorphism by replacing $G$  with $\text{Im}(\psi)$ if necessary.
Let $\{g_1\in G(v_1),\ldots, g_n \in G(v_n)\}$ be a minimal set of generators for $G$ (see \ref{point minimal})
and $h_i$ be any element in $H(v_i)$ which is mapped via $\psi$ to $g_i$.  Since 
$H/\langle h_1,\ldots, h_n \rangle$ is a quotient of the kernel of $\psi$, the inclusion $\langle h_1,\ldots, h_n\rangle \subset H$ is a $\mu$-equivalence. It follows that $\langle h_1,\ldots, h_n\rangle\subset F$  is a $(\tau+\mu)$-equivalence.
As the rank of $\langle h_1,\ldots, h_n \rangle$ is $n$, the proposition follows.
\end{proof}

Recall that any compact and tame functor $F\colon\QQ\to \text{\rm Vect}_K$ is  isomorphic to a finite direct sum of the form $\bigoplus_{i\in I} [w_i,u_i)\oplus\bigoplus_{j\in J} K(v_j,-)$  (see~\ref{prop chartamer=1}). Furthermore
the isomorphism types of  these   summands are uniquely determined by the isomorphism type of $F$.
For a positive $t$ in $\QQ$, define:
\[I_t:=\{i\in I\ |\ [w_i,u_i)\not \in \cS_\epsilon\text{ for any }\epsilon<t\}\]
\[J_t:=\{j\in J\ |\ K(v_j,-)\not \in \cS_\epsilon\text{ for any }\epsilon<t\}\]
\begin{prop}\label{prop:bars1}
Assume  $\{\cS_{\epsilon}\}_{\epsilon \in \QQ}$ is a noise system in $\text{\rm Tame}(\QQ, \text{\rm Vect}_K)$
whose components are closed under direct sums.  Let $F\colon\QQ\to \text{\rm Vect}_K$  be a compact and tame functor.  Then  $\text{\rm bar}(F)_t=|I_t|+|J_t|$.
\end{prop}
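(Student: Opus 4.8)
The plan is to start from the decomposition $F\cong\bigoplus_{i\in I}[w_i,u_i)\oplus\bigoplus_{j\in J}K(v_j,-)$ supplied by Proposition~\ref{prop chartamer=1} and to regroup it as $F\cong F'\oplus N$, where $F'$ is the direct sum of the summands indexed by $I_t\sqcup J_t$ and $N$ is the direct sum of the remaining ones; note $\text{rank}(F')=|I_t|+|J_t|$. The case $t=0$ is trivial: there the conditions defining $I_t$ and $J_t$ are vacuously satisfied, so $I_0=I$, $J_0=J$, and $\text{bar}(F)_0=\text{rank}(F)=|I|+|J|$. So assume $t>0$ and prove the two inequalities $\text{bar}(F)_t\le|I_t|+|J_t|$ and $\text{bar}(F)_t\ge|I_t|+|J_t|$ separately.

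For the upper bound: every summand of $N$ lies in $\cS_\epsilon$ for some $\epsilon<t$ by the very definition of $I_t$ and $J_t$, and since $N$ is a finite direct sum there is a single $\epsilon^\ast<t$ such that all its summands lie in $\cS_{\epsilon^\ast}$; as $\cS_{\epsilon^\ast}$ is closed under direct sums, $N\in\cS_{\epsilon^\ast}$. The projection $F\twoheadrightarrow F/N\cong F'$ has kernel $N\in\cS_{\epsilon^\ast}$ and zero cokernel, hence is an $\epsilon^\ast$-equivalence, so $F$ and $F'$ are $\epsilon^\ast$-close with $\epsilon^\ast<t$. Therefore $F'\in B(F,t)$ and $\text{bar}(F)_t\le\text{rank}(F')=|I_t|+|J_t|$. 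This is the only step that uses the hypothesis that the components of the noise system are closed under direct sums.

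For the lower bound I would invoke Proposition~\ref{barr=1}, which allows one to compute $\text{bar}(F)_t$ by minimising $\text{rank}(G)$ over tame subfunctors $G\subseteq F$ whose inclusion is an $\epsilon$-equivalence for some $\epsilon<t$; since such an inclusion is a monomorphism its kernel is zero, so this simply means $F/G$ lies in $\cS_\mu$ for some $\mu\le\epsilon<t$. Let $C_k$ denote the summand of $F$ indexed by $k\in I_t\sqcup J_t$, so $C_k$ is a bar or a free functor, has rank $1$, and, by definition of $I_t$ and $J_t$, satisfies $C_k\notin\cS_\epsilon$ for every $\epsilon<t$. I claim $C_k\cap G\ne 0$: if it were zero, the composite $C_k\hookrightarrow F\twoheadrightarrow F/G$ would be injective, exhibiting $C_k$ as a subfunctor of $F/G\in\cS_\mu$ and hence putting $C_k$ in $\cS_\mu$, which contradicts $\mu<t$. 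A nonzero subfunctor of the rank-one indecomposable $C_k$ has rank exactly $1$ — at most $1$ because subfunctors in $\text{Tame}(\QQ,\text{Vect}_K)$ do not increase rank (as recalled in the proof of Proposition~\ref{barr=1}), and at least $1$ because it is nonzero. Since the $C_k$ are independent summands of $F$, the subfunctors $C_k\cap G$ form an internal direct sum $Q:=\bigoplus_{k\in I_t\sqcup J_t}(C_k\cap G)\subseteq G$ with $\text{rank}(Q)=\sum_k\text{rank}(C_k\cap G)=|I_t|+|J_t|$, by additivity of rank over finite direct sums. Applying once more that subfunctors do not increase rank gives $\text{rank}(G)\ge\text{rank}(Q)=|I_t|+|J_t|$, hence $\text{bar}(F)_t\ge|I_t|+|J_t|$, and combined with the upper bound this proves the proposition.

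The genuinely delicate point is the lower bound, and specifically the step that converts the soft observation $C_k\cap G\ne 0$ into a quantitative bound on $\text{rank}(G)$: this rests on the PID-type structure of $\text{Tame}(\QQ,\text{Vect}_K)$, namely that a subfunctor of a compact functor is again a finite direct sum of bars and free functors and in particular cannot have larger rank than the ambient functor. Everything else — the upper bound, the reduction to subfunctors via Proposition~\ref{barr=1}, and the elementary identities for ranks of direct sums — is routine.
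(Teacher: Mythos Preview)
Your proof is correct and follows essentially the same route as the paper's: both split $F\cong F_1\oplus F_2$ with $F_2\in\cS_\epsilon$ for some $\epsilon<t$, use closure under direct sums for the upper bound, and invoke Proposition~\ref{barr=1} together with the subfunctor-rank inequality for the lower bound. The paper packages the reduction via Corollary~\ref{cor directsum} (so that one may assume $F=F_1$) and then leaves the lower bound as ``one shows that for any surjection $\phi\colon F\twoheadrightarrow G$ with $G\in\cS_\epsilon$ for some $\epsilon<t$, $\ker\phi$ has the same rank as $F$''; your intersection argument $C_k\cap G\neq 0$ is exactly what fills in that step, carried out directly on the original $F$ rather than after the reduction.
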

\begin{proof}
Let:
\[F_1:=\bigoplus_{i\in I_t} [w_i,u_i)\oplus\bigoplus_{j\in J_t} K(v_j,-)\ \ \ \ \ \ F_2:=\bigoplus_{i\in I\setminus I_t} [w_i,u_i)\oplus\bigoplus_{j\in J\setminus J_t} K(v_j,-)\]
The functor $F$ is isomorphic to $F_1\oplus F_2$ and $F_2$ belongs to $\cS_\epsilon$ for some $\epsilon<t$.
We can then use Corollary~\ref{cor directsum} to conclude  $\text{bar}(F)_t=\text{bar}(F_1)_t$. 
Without loss of generality, we can therefore assume $F=F_1$, i.e., $I=I_t$ and $J=J_t$.

In that case one shows that for any surjection $\phi\colon F\twoheadrightarrow G$ where $G$ is in $\cS_{\epsilon}$ for some $\epsilon<t$, the kernel of $\phi$ has the same rank as $F$. The proposition then follows from~\ref{barr=1}.
\end{proof}

Assume  the components of the noise system $\{\cS_{\epsilon}\}_{\epsilon \in \QQ}$ 
in $\text{\rm Tame}(\QQ, \text{\rm Vect}_K)$ are closed under direct sums.
This implies that, for any $\epsilon$ in $\QQ$, there exists    the unique maximal  subfunctor $F[\cS_{\epsilon}]\subset F$  with respect to the property that
$F[\cS_{\epsilon}]$ is in $\cS_{\epsilon}$ (see~\ref{maximal noise}).
 Let $t>0$ be in $\QQ$. For any $\tau\leq \mu<t$, since $\cS_\tau\subset\cS_{\mu}$, we have  inclusions
$F[\cS_{\tau}]\subset F[\cS_{\mu}]\subset F$. Define $F[\cS_{<t}]:=\bigcup_{\tau<t} F[\cS_{\tau}]\subset F$.
\begin{cor}\label{cor r01quitoentden}
Let  $\{\cS_{\epsilon}\}_{\epsilon \in \QQ}$ be a noise system in $\text{\rm Tame}(\QQ, \text{\rm Vect}_K)$
whose components are closed under direct sums and  $F\colon\QQ\to \text{\rm Vect}_K$   a compact and tame functor.  Then  $\text{\rm bar}(F)_t=\text{\rm rank}\left(\text{\rm coker}(F[\cS_{<t}]\subset F)\right)$.
\end{cor}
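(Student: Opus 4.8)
The strategy is to reduce Corollary~\ref{cor r01quitoentden} to Proposition~\ref{prop:bars1} by identifying the quotient $\text{coker}(F[\cS_{<t}]\subset F)$ explicitly in terms of the summand decomposition of $F$. First I would fix the decomposition $F\cong \bigoplus_{i\in I}[w_i,u_i)\oplus\bigoplus_{j\in J}K(v_j,-)$ guaranteed by Proposition~\ref{prop chartamer=1}, and recall that for each $\tau<t$ the subfunctor $F[\cS_\tau]$ exists because the noise system is closed under direct sums (Proposition~\ref{maximal noise}). The key claim is that
\[
F[\cS_{<t}]\;=\;\bigoplus_{i\in I\setminus I_t}[w_i,u_i)\;\oplus\;\bigoplus_{j\in J\setminus J_t}K(v_j,-),
\]
i.e.\ $F[\cS_{<t}]$ is exactly the summand $F_2$ appearing in the proof of Proposition~\ref{prop:bars1}. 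Granting this claim, $\text{coker}(F[\cS_{<t}]\subset F)\cong F_1$, and $\text{rank}(F_1)=|I_t|+|J_t|=\text{bar}(F)_t$ by Proposition~\ref{prop:bars1}, which finishes the proof.

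To establish the claim I would argue in two directions. For the inclusion $F_2\subseteq F[\cS_{<t}]$: each indexing set $I_t$, $J_t$ is defined by the condition that the corresponding indecomposable is \emph{not} in $\cS_\epsilon$ for any $\epsilon<t$; hence for $i\in I\setminus I_t$ the bar $[w_i,u_i)$ lies in $\cS_{\epsilon_i}$ for some $\epsilon_i<t$, and similarly for $j\in J\setminus J_t$. Taking $\epsilon:=\max$ of these finitely many $\epsilon_i$ (still $<t$) and using that $\cS_\epsilon$ is closed under direct sums, $F_2\in\cS_\epsilon\subseteq\cS_{<t}$, so $F_2\subseteq F[\cS_\epsilon]\subseteq F[\cS_{<t}]$ by maximality. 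For the reverse inclusion $F[\cS_{<t}]\subseteq F_2$: it suffices to show that the composite $F[\cS_{<t}]\hookrightarrow F\twoheadrightarrow F_1$ is zero. Fix $\tau<t$; then $F[\cS_\tau]\in\cS_\tau$, and its image in $F_1$ is a quotient of $F[\cS_\tau]$, hence also in $\cS_\tau$ (a component of a noise system is closed under quotients). So it is enough to show that $F_1$ has no nonzero subfunctor lying in $\cS_\epsilon$ for any $\epsilon<t$. This is precisely the fact extracted inside the proof of Proposition~\ref{prop:bars1}: any surjection from $F_1$ onto a functor in $\cS_\epsilon$ ($\epsilon<t$) has kernel of full rank, equivalently (by~\ref{barr=1}-type reasoning, or directly from the structure theorem) $F_1$ admits no nonzero subfunctor in $\cS_{<t}$. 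I would phrase this last point as a short lemma or simply invoke the argument already made there, applied to the identity-type situation.

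The main obstacle is the reverse inclusion, i.e.\ verifying that $F_1$ contains no nonzero noise subfunctor of size $<t$. In dimension one this is controlled by the structure theorem: a subfunctor of $\bigoplus_i[w_i,u_i)\oplus\bigoplus_j K(v_j,-)$ is again a finite direct sum of bars and free functors, and one must check that each such indecomposable summand of a subfunctor of, say, $[w_i,u_i)$ with $i\in I_t$ cannot be strictly smaller (in the noise sense) than $[w_i,u_i)$ itself unless it is zero — this uses that bars are "longest" among their subfunctors, together with the additivity axiom of the noise system. Everything else is bookkeeping with the decomposition and the closure properties already proved, so I would keep those parts brief and concentrate the writing on this subfunctor analysis, most of which is in fact already discharged by the proof of Proposition~\ref{prop:bars1}.
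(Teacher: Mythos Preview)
Your central claim $F[\cS_{<t}]=F_2$ is false, and with it the assertion that $F_1$ has no nonzero subfunctor lying in $\cS_\epsilon$ for $\epsilon<t$. Take the standard noise on $\text{\rm Tame}(\QQ,\text{\rm Vect}_K)$, let $F=[0,5)$ and $t=3$. Then $I_t=I$, so $F_1=F$ and $F_2=0$; yet $F[\cS_\tau]=[5-\tau,5)$ is a nonzero subfunctor of $F_1$ in $\cS_\tau$ for every $0<\tau<3$, and $F[\cS_{<3}]$ is the (non-tame) subfunctor supported on the open interval $(2,5)$. More generally, every bar $[w,u)$ with $u-w>t$ admits nonzero subfunctors $[w',u)$ in $\cS_\tau$ for $\tau<t$: the subfunctor check you describe in the last paragraph (``each indecomposable summand of a subfunctor of $[w_i,u_i)$ cannot be strictly smaller in the noise sense unless zero'') simply fails. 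Note also that this statement is \emph{not} equivalent to the surjection statement used in the proof of Proposition~\ref{prop:bars1}; a surjection $F_1\twoheadrightarrow G$ with $G\in\cS_\epsilon$ having full-rank kernel says nothing about noise \emph{sub}functors of $F_1$.

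What does survive is the easy inclusion $F_2\subseteq F[\cS_{<t}]$, giving $\text{rank}(\text{coker})\le |I_t|+|J_t|$. For the reverse inequality, replace the false claim ``the image of $F[\cS_{<t}]$ in $F_1$ is zero'' by the correct one: this image lies in $\text{rad}(F_1)$. Indeed, by the corollary after Proposition~\ref{maximal noise} one has $F_1[\cS_\tau]=\bigoplus_{i\in I_t}[w_i,u_i)[\cS_\tau]\oplus\bigoplus_{j\in J_t}K(v_j,-)[\cS_\tau]$, and since each summand of $F_1$ is \emph{not} in $\cS_\tau$ (that is what $I_t,J_t$ mean), each $[w_i,u_i)[\cS_\tau]$ and $K(v_j,-)[\cS_\tau]$ is a proper subfunctor, hence vanishes at the generator and so sits inside the radical. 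Quotienting $F_1$ by a subfunctor contained in $\text{rad}(F_1)$ does not change $F_1/\text{rad}(F_1)$, so the minimal cover of $F_1$ remains a minimal cover of the cokernel and the rank equals $|I_t|+|J_t|=\text{bar}(F)_t$.
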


\section{The Feature Counting Invatiant for the standard noise}\label{sec bbsn}
The strategy for computing the feature counting function can be further improved in the case of the standard noise
$\{V_{\epsilon}\}_{\epsilon \in \QQ}$ 
 in the direction of a cone  $V\subset \QQ^r$ (see~\ref{pt stannoisecone}).  Such a noise system is fixed throughout this section. As in 
 Section~\ref{one dimensional}, define $B^{\prime\prime}(F,t)$ to be the collection of tame subfunctors $G\subset F$ for which  this inclusion   is an $\epsilon$-equivalence
 for some $\epsilon< t$ in $\QQ$.

 \begin{prop}\label{subfunctor}
 Let $F\colon\QQ^r\to \text{\rm Vect}_K$ be a tame and compact functor. Then for the standard noise in the direction of a cone:
\[\text{\rm bar}(F)_{t}=\text{\rm min} \{\text{\rm rank}(G) \,|\, G \in B(F,t)\}=
\text{\rm min} \{\text{\rm rank}(G) \,|\, G \in B^{\prime\prime} (F,t)\}\]
\end{prop}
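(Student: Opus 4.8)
The plan is to prove the two inequalities between the displayed minima separately. The inequality ``$\ge$'' is immediate: if $G\subseteq F$ is a tame subfunctor whose inclusion is an $\epsilon$-equivalence with $\epsilon<t$, then $G$ is in particular $\epsilon$-close to $F$, so $B''(F,t)\subseteq B(F,t)$ and hence $\min\{\operatorname{rank}(G)\mid G\in B''(F,t)\}\ge\operatorname{bar}(F)_t$.

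For ``$\le$'' I would use Proposition~\ref{mono}, which already replaces $B(F,t)$ by the smaller collection $B'(F,t)$; it then suffices to produce, for every $G\in B'(F,t)$ with $\operatorname{rank}(G)=\operatorname{bar}(F)_t=:n$, a subfunctor $H'\subseteq F$ with $\operatorname{rank}(H')\le n$ whose inclusion is an $\epsilon$-equivalence for some $\epsilon<t$ (combined with ``$\ge$'' this then forces $\operatorname{rank}(H')=n$). Fix such a $G$ together with a witnessing pair $\phi\colon H\to F$ (a monomorphism and a $\tau$-equivalence) and $\psi\colon H\to G$ (a $\mu$-equivalence) with $\tau+\mu<t$. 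Since $F$ is tame and compact and $F/H=\operatorname{coker}(\phi)$ is tame (being a member of the noise system), Proposition~\ref{prop basickan}(10) shows $H$ is compact, and the same remark will apply to the subfunctor $H'$ built below, whose cokernel in $F$ will be tame.

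The clean case is $\psi$ an epimorphism. Then, picking a minimal set of generators $g_1\in G(v_1),\dots,g_n\in G(v_n)$ of $G$ and lifting each $g_i$ to $h_i\in H(v_i)$, the subfunctor $H':=\langle h_1,\dots,h_n\rangle\subseteq H\subseteq F$ has $\operatorname{rank}(H')\le n$ and $\psi(H')=G$, so $H=H'+\ker(\psi)$ and $H/H'$ is a quotient of $\ker(\psi)$, hence lies in $\cS_{\mu}$. From the exact sequence $0\to H/H'\to F/H'\to F/H\to 0$ and additivity of the noise system we get $F/H'\in\cS_{\epsilon}$ with $\epsilon=\tau+\mu<t$, so $H'\hookrightarrow F$ is an $\epsilon$-equivalence, as needed. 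This step uses only the axioms of a noise system.

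The main obstacle — and the reason for restricting to the standard noise in the direction of a cone — is reducing the general case to the surjective one without increasing $\operatorname{rank}(G)$. For $r=1$ (Proposition~\ref{barr=1}) this is automatic, since there a subfunctor of $G$ has rank at most $\operatorname{rank}(G)$ and one simply replaces $G$ by $\operatorname{Im}(\psi)$; for $r>1$ this fails, because a subfunctor of a compact tame functor can have strictly larger rank. For the standard noise in the direction of a cone $V$ the available input is $\operatorname{coker}(\psi)\in V_{\mu_2}$: for each generator $g_i$ one chooses $w_i\in V$ with $\|w_i\|=\mu_2$ so that $G(v_i\le v_i+w_i)(g_i)$ lies in $\operatorname{Im}(\psi)$, lifts it to $h_i\in H(v_i+w_i)$, and sets $H':=\langle h_1,\dots,h_n\rangle$; because $V$ is a cone, $w:=w_1+\cdots+w_n\in V$ dominates each $w_i$, and translation along $w$ then carries $G$ into $\psi(H')$, which together with the exact sequences $0\to\ker(\psi)/(\ker(\psi)\cap H')\to H/H'\to\operatorname{Im}(\psi)/\psi(H')\to 0$ and $0\to H/H'\to F/H'\to F/H\to 0$ controls $F/H'$. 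The delicate point I expect to have to work hardest on is the accompanying scale estimate: one must arrange that the resulting equivalence $H'\hookrightarrow F$ has scale strictly below $t$, which means absorbing $\operatorname{coker}(\psi)$ into a cone translation of controlled norm rather than into $w=\sum_i w_i$, whose norm is only bounded by $n\mu_2$ — so in practice the argument should first be organized to reduce to the surjective case, where nothing needs to be absorbed.
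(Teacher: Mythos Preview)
Your setup coincides with the paper's: same use of Proposition~\ref{mono}, same shifts $w_i\in V$ with $\|w_i\|=\mu_2$ so that $g_i':=G(v_i\le v_i+w_i)(g_i)\in\operatorname{Im}\psi$, same lifts $h_i\in H(v_i+w_i)$, same subfunctor $H'=\langle h_1,\dots,h_n\rangle$, and you have already written down the two exact sequences the paper uses. The gap is precisely where you flag it, and both of your proposed remedies fail: translating by $w=\sum_i w_i$ gives only the useless bound $n\mu_2$, and there is no way to ``reduce to the surjective case first'' without risking a rank increase --- that obstruction is exactly why the $r=1$ argument of Proposition~\ref{barr=1} does not carry over verbatim.

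What you are missing is that one should not look for a single translation at all. Decompose $\mu=a+b$ with $\operatorname{coker}(\psi)\in V_a$ and $\ker(\psi)\in V_b$, and set $G':=\psi(H')=\langle g_1',\dots,g_n'\rangle$. The paper's key assertion is that $G/G'$ itself lies in $V_a$: the classes $\bar g_i$ generate $G/G'$ and each $\bar g_i$ is annihilated by its \emph{own} shift $w_i$ of norm $a$, and the paper regards this as following directly from the definition of the cone noise. Granting this, $\operatorname{Im}\bar\psi=\operatorname{Im}\psi/G'$ lies in $V_a$ as a subfunctor of $G/G'$; your kernel term $\ker(\psi)/(\ker(\psi)\cap H')$ is a quotient of $\ker\psi$ and hence lies in $V_b$; so $H/H'\in V_{a+b}=V_\mu$ by additivity, and then $F/H'\in V_{\tau+\mu}$ with $\tau+\mu<t$. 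No factor of $n$ appears anywhere --- the point is that membership in $V_a$ is tested element by element, requiring for each $x$ only \emph{some} $w$ of norm $a$, not one $w$ working uniformly for all generators.
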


\begin{proof}
%
By Proposition \ref{mono} there are
natural transformations $F\hookleftarrow H:\!\phi$ and $\psi\colon H\to G$ such that
$\text{coker}(\phi)\in V_\tau$, $\text{coker}(\psi)\in V_{a}$, $\text{ker}(\psi)\in V_{b}$,  $\tau+a+b<t$, and
$\text{rank}(G)=\text{\rm bar}(F)_{t}=:n$.  Let $\{g_1\in G(v_1),\ldots, g_n \in G(v_n)\}$ be a minimal set of generators for $G$ (see \ref{point minimal}).
Since $\text{coker}(\psi)\in V_{a}$, there are vectors $w_1,\ldots, w_n$ in the cone $V$ such that
$||w_i||=a$ and the element  $g_i^{\prime}:=G(v_i<v_i+w_i)(g_i)$  is in the image of $\psi_{v_i+w_i}\colon H(v_i+w_i)\to G(v_i+w_i)$.   Let $h_i\in H(v_i+w_i)$ be any element that is mapped via $\psi_{v_i+w_i}$ to $g_i^{\prime}$.
Consider the subfunctor $F^{\prime}:=\langle \phi(h_1),\ldots, \phi(h_n)\rangle$ of $F$. We claim that the inclusion $F^{\prime}\subset F$  is $(\tau+a+b)$--equivalence, and hence $F^{\prime}$ belongs to $B(F,t)$. If the claim holds, since $\text{rank}(F^{\prime})\leq n$, by the minimality of the rank of $G$, we can conclude that
$\text{rank}(F^{\prime})= n$ and the proposition follows. 
The inclusion $F^{\prime}\subset F$  is the image of the composition:
\[\xymatrix{
\langle h_1,\ldots,h_n \rangle\ar@{^(->}[r] & H\ar@{^(->}[r]^{\phi} &F}\]
As $\phi$ is a $\tau$-equivalence, it is enough to show that $\langle h_1,\ldots,h_n\rangle \subset H$ is an
$(a+b)$-equivalence (see~\ref{prop additiveequiv}).
Set $H':=\langle h_1,\ldots,h_n\rangle \subset H$ and $G':=\langle g_1^{\prime},\ldots,g_n^{\prime}\rangle \subset G$
and consider the following commutative diagram:
\[\xymatrix@R=15pt@C=15pt{
 & \langle h_1,\ldots,h_n\rangle  \ar@{->>}[r]\ar@{^(->}[d] &  \langle g_1^{\prime},\ldots,g_n^{\prime}\rangle\ar@{^(->}[d]\\
 \text{ker}(\psi)\ar@{^(->}[r] \dto& H\rto^{\psi}\ar@{->>}[d] & G\ar@{->>}[d] \\
 \text{ker}(\bar{\psi})\ar@{^(->}[r] & H/H'\rto^{\bar{\psi}}& G/ G'
}\]
The square containing $\psi$ and $\bar{\psi}$
is a push-out square and therefore the natural transformation  $\text{ker}(\psi)\to  \text{ker}(\bar{\psi})$ is an epimorphism.
It follows that  $ \text{ker}(\bar{\psi})$ belongs to $V_b$. Furthermore by definition, $G/G'$ is in $V_a$ and hence so  is the image of $\bar{\psi}$. We can then use the additivity property of noise systems to conclude that
$H/H'$ belongs to $V_{a+b}$.
\end{proof}
According to~\ref{subfunctor}, to find the value $\text{bar}(F)_t$, we need to find the minimum rank of subfunctors of $G\subset F$ for which this inclusion is an $\epsilon$-equivalence for some $\epsilon<t$.
Consider the functor given in Example ~\ref{example mono} where $\text{bar}(F)_2=1$. Note that  a minimal set of generators for $F$ is given by any $g_1\not=0$ in $ F(1,0)$ and $g_2\not=0$ in $F(0,1)$.
In this case both  $\langle g_1\rangle $ and $\langle  g_2 \rangle $ are $2$-close to $F$. In this example,  to get the minimum rank among
functors in $B(F,2)$, we do not need to consider all subfunctors $G\subset F$, but only the subfunctors which are generated by subsets of a given  set of minimal generators for $F$.  This is also the case when restricting to functors with one-dimensional domain. However the corresponding statement fails in the general setting as illustrated by the following examples:

\begin{example} Assume  the characteristic of $K$ is not $2$.
Let us consider the standard noise in $\text{Tame}(\QQ^3,\text{Vect}_K)$ in the direction of the cone
$\text{Cone}(1,1,1)$.
Consider any  compact and $1$-tame functor $F\colon\QQ^3\rightarrow \text{\rm Vect}_K $ 
of rank $3$ whose restriction to the sub-poset
$\{ v\leq (1,1,1)\}\cup\{(1,1,2),(1,2,1),(2,1,1)\}\subset \N^3\subset \QQ^3$ is given by the following commutative  diagram:

\[\xymatrix@C=15pt@R=15pt{
&&&K^3&\\
&&&&K^3\\
& K^2 \rrto^-{\phi} & & K^3 \uuto^-{f_1}\rrto_-{f_3}\urto_-{f_2} && K^3 \\
K \rrto_-{} \urto^-{\alpha} & & K^2\urto^(.4){\psi}\\
& K \uuto_-{}|\hole\rrto^(.25){\beta}|\hole & & K^2\uuto_-{\delta}\\
0\uuto\rrto\urto & & K \uuto_-{}\urto_-{\gamma}
}\]
where $\alpha=\gamma=\begin{pmatrix}
  1 \\
  0
   \end{pmatrix}$, \,$\beta=\begin{pmatrix}
  0 \\
  1
   \end{pmatrix}$, \,$\phi=\begin{pmatrix}
  1 & 0 \\
  0 & 1 \\
  0 & 0
   \end{pmatrix}$, \, $\delta=\begin{pmatrix}
  0 & 0 \\
  0 & 1 \\
  1 & 0
   \end{pmatrix},\, \psi=\begin{pmatrix}
  0 & 1 \\
  0 & 0 \\
  1 & 0
  \end{pmatrix},$
\[
f_1= \begin{pmatrix}
  1 & 1 & 0 \\
  1 & 0 & 1 \\
  0 & 0 & 0 
   \end{pmatrix}
,\  
f_2= \begin{pmatrix}
  1 & 1 & 0 \\
  0 & 1 & 1 \\
  0 & 0 & 0 
   \end{pmatrix}
,\ 
f_3= \begin{pmatrix}
  1 & 0 & 1 \\
  0 & 1 & 1 \\
  0 & 0 & 0 
   \end{pmatrix}.
\]
Let $g_1\in F(1,0,0)$, $g_2\in F(0,1,0)$ and $g_3\in F(0,0,1)$ be  non-zero vectors in  these  $1$-dimensional vector spaces.
These vectors form a minimal set of generators for $F$. Note that the vectors:
\begin{center}
$h_1:=F((1,0,0)\leq (1,1,1))(g_1)$\\ $h_2:=F((0,1,0)\leq (1,1,1))(g_2)$\\
$h_3:=F((0,0,1)\leq (1,1,1))(g_3)$\end{center}
form the standard basis for
$ F(1,1,1)=K^3$. The subfunctor $\langle h_1+h_2+h_3\rangle \subset F$ is $1$-close to $F$. This follows from 
the following equalities:
\begin{center}
 $f_1(h_1)=1/2 f_1(h_1+h_2+h_3)$\\ $f_2(h_2)=1/2 f_2(h_1+h_2+h_3)$ \\ $f_3(h_3)=1/2 f_3(h_1+h_2+h_3)$
 \end{center}
  Note further that  $f_1(h_3)$, $f_1(h_1)$ and $f_1(h_2)$ are pair-wise linearly independent. 
This implies that $\langle g_2 \rangle$ and $\langle g_3\rangle$ are not $1$-close to $F$. In the same way, $f_2(h_2)$ and $f_2(h_1)$ are not parallel and  therefore $\langle g_1\rangle$ is not $1$-close to $F$ either.   
\end{example}
\begin{example}
Let us consider the standard noise in $\text{Tame}(\QQ^2,\text{Vect}_K)$ in the direction of the cone
$\text{Cone}(1,1)$.
Consider any  compact and $1$-tame functor $F\colon\QQ^2\rightarrow \text{\rm Vect}_K $
of rank $2$ whose restriction to the sub-poset
$\{ v\leq (2,2)\}\subset \N^2\subset \QQ^2$ is given by the following commutative  diagram:

\[\xymatrix@C=40pt@R=40pt{
K\rto^{\text{id}} & K\rto & 0\\
K\rto_{\begin{pmatrix}0 \\ 1  \end{pmatrix}}\uto^{\text{id}}& K^2\rto^{\begin{pmatrix}1 & 0  \end{pmatrix}}
\uto^{\begin{pmatrix}1 & 1  \end{pmatrix}} & K\uto\\
0\uto\rto & K\uto_-{\begin{pmatrix}1 \\ -1  \end{pmatrix}}\rto^{\text{id}} & K\uto_{\text{id}}
}\]
Let $g_1\in F(1,0)$ and $g_2\in F(0,1)$ be non-zero vectors in these $1$-dimensional vector spaces.
These vectors form a minimal set of generators for $F$.  Note that neither $\langle g_1 \rangle\subset F$ nor $\langle g_2\rangle\subset F$  are $1$-close to $F$. However the subfunctor of $F$ generated by the element $(1,0)$  in  $K^2=F(1,1)$ is
$1$-close to $F$.
\end{example}

\section{denoising}\label{sec denoising}
The aim of this section is to introduce a notion of {\bf denoising} for tame and compact functors.  Intuitively, a denoising is an approximation and hopefully a simplification  of such a functor  that can be performed at different scales.  

\begin{Def}
Let  $\{\cS_{\epsilon}\}_{\epsilon\in \QQ}$ be a noise system in  $\text{Tame}(\QQ^r, \text{\rm Vect}_K)$ and 
 $F \colon\QQ^r \to \text{\rm Vect}_K$ be a tame and compact functor. A denoising of $F$ is a sequence of functors $\{\text{denoise}(F)_t\}_{0<t  \in \QQ}$, indexed by positive rational numbers, such that for  any $t $:
\begin{itemize}
\item $\text{denoise}(F)_t$ is in $B(F,t)$,
\item $\text{rank}(\text{denoise}(F)_t)= \text{bar}(F)_{t}$.
\end{itemize}
\end{Def}

Thus a denoising of $F$ at scale $t$ is a choice of a functor in the neighborhood $B(F,t)$ that realizes the minimum  value of the rank, which is given by $ \text{bar}(F)_{t}$. There are of course many such choices and there seems not to be a canonical one in general for $r>1$. 
Different denoising algorithms highlight different properties of the functor. Here we  present some examples of denoisings.

\begin{point}{\bf Minimal subfunctor denoising}. \label{minimal subfunctor}
Let $\{V_{\epsilon}\}_{\epsilon \in \QQ}$ be the standard noise 
 in the direction of a cone  $V\subset \QQ^r$, (see~\ref{pt stannoisecone}).  In this case
$ \text{bar}(F)_{t}$ can be obtained as the rank of some $G$ in  $B(F,t)$  which is a subfunctor of $F$ (see~ Proposition \ref{subfunctor}). 
Among these subfunctors with minimal rank we can then choose one which is also minimal with respect to the inclusion.
We call such a choice, a minimal subfunctor denoising of $F$ at $t$. For example:

\begin{example}\label{subfunctor denoising}
Consider the compact and $1$-tame functor $F\colon \QQ^2\rightarrow \text{\rm Vect}_K $  whose restriction to the sub-poset $\N^2\subset \QQ^2$ is described as follows. On the square $\{ v\leq (3,3)\}\subset \N^2$, $F$ is given by the    commutative  diagram: 
 \[\xymatrix@C=12pt@R=12pt{
 K \rto^-{} & K \rto^-{} & K \rto^-{}& K\\
 0 \rto^-{}\uto_-{} & K \rto^-{}\uto_-{} & K \rto^-{}\uto_-{}& K \uto_-{}\\
 0\uto_-{}\rto^-{} & 0\uto_-{} \rto^-{} & K \uto_{}\rto^-{}&K \uto_-{}\\
 0\rto\uto & 0 \uto_-{}\rto^-{} & 0 \uto_-{}\rto^-{}&K \uto_-{}
 }\]
 where the maps between non-zero entries are the identities.
 For $w$ in $\N^2\setminus \{ v\leq (3,3)\}$, the map $F(\text{meet}\{w,(3,3)\}\leq w)$ is an isomorphism.
 Consider the standard noise $\{V_{\epsilon}\}_{\epsilon \in \QQ}$  in the direction of  $\text{Cone}(1,1)$.
 For that noise system $\text{bar}(F)_2=2$.
 Let $\{ g_1 \in F(0,3),\, g_2 \in F(2,1)\, , g_3 \in F(1,2),\,  g_4 \in F(3,0)\}$ be a minimal set of generators for $F$.
 %
Set $g_1^{\prime}:=F((0,3)\leq(1,3))(g_1)$ and $g_4^{\prime}:=F((3,0)\leq(3,1))(g_4)$. The subfunctor $<g_1^{\prime}, g_4^{\prime}> \subset F$ is in $B(F,2)$, has the required  rank $2$ and is the minimal element, with respect to inclusion, among subfunctors of $F$ with  rank $2$ in $B(F,2)$.  
 
 \end{example}

In general the minimal subfunctor denoising is not unique: 
\begin{example}
Consider the compact and $1$-tame functor $F\colon \QQ^2\rightarrow \text{\rm Vect}_K $ 
 whose restriction to the sub-poset $\N^2\subset \QQ^2$ is described as follows. 
For $\{ v\leq (3,5)\}\subset \N^2$, $F$ is given by the    commutative  diagram: 
\[\xymatrix@C=12pt@R=12pt{
 K \rto^-{} & K \rto^-{} & K \rto^-{}& K \rto^-{}&K \rto^-{} & K   \\
 0 \rto^-{}\uto_-{} & K \rto^-{}\uto_-{} & K \rto^-{}\uto_-{}& K \uto_-{}  \rto^-{}&K \uto_-{}  \rto^-{} & K \uto_-{}\\
 0\uto_-{}\rto^-{} & 0\uto_-{} \rto^-{} & 0 \uto_{}\rto^-{}&K \uto_-{}  \rto^-{}&K \uto_-{}  \rto^-{} & K \uto_-{}\\
 0\rto\uto & 0 \uto_-{}\rto^-{} & 0 \uto_-{}\rto^-{}&0 \uto_-{}  \rto^-{} & 0 \uto_-{}  \rto^-{} & K \uto_-{}
 }\]
 where the maps between non-zero entries are the identities.
For $w$ in $\N^2\setminus \{ v\leq (3,5)\}$, the map $F(\text{meet}\{w,(3,5)\}\leq w)$ is an isomorphism.
 Consider the standard noise $\{V_{\epsilon}\}_{\epsilon \in \QQ}$  in the direction of  $\text{Cone}(1,1)$.
 For that noise $\text{bar}(F)_2=2$.
Let $\{ g_1 \in F(0,3),\, g_2 \in F(1,2)\, , g_3 \in F(3,1),\,  g_4 \in F(5,0)\}$ be a minimal set of generators for $F$. 
Let $g_1^\prime=F\left((0,3)<(1,3)\right)(g_1)$ and $g_4^\prime=F((5,0)<(5,1))(g_4)$.
The subfunctors of $F$ given by $<g_1^\prime,g_3>$ and  $<g_2,g_4^\prime>$ are in $B(F,2)$ and they have required rank $2$. Thus they are examples of $2$-denoising of $F$. Furthermore they are both minimal with respect to inclusion.
 \end{example}

\end{point}

\begin{point}{\bf Quotient denoising}. \label{quotient denoising}
Let $\{\cS_{\epsilon}\}_{\epsilon\in \QQ}$ be a noise system in $\text{\rm Tame}(\QQ^r,\text{\rm Vect}_K)$ whose compact part is closed under direct sums
(see~\ref{prop directsumstandnoisecone} and~\ref{sequence of vectors}  for  when a   standard noise in the direction of a cone $V\subset \QQ^r$ or a sequence of vectors $\V=\{v_1,\ldots, v_n\}$ in $\QQ^r$ is closed under direct sums). 
 For any $F\colon\QQ^r\to\text{\rm Vect}_K$ there exists a unique maximal noise of size $\epsilon$ contained in $F$,  $F[\cS_{\epsilon}]\subset F$  (see~\ref{maximal noise}). Given a tame and compact functor 
$F \colon\QQ^r \to \text{\rm Vect}_K$, define the subfunctor $F[\cS_{<t}]:=\bigcup_{\tau<t} F[\cS_{\tau}]\subset F$.
 One can ask if the sequence $\{\text{coker}(F[\cS_{<t}]\subset F)\}_{0<t \in \QQ}$ is a denoising of $F$.  If it is a denoising, we call it the quotient denoising of $F$.
Corollary~\ref{cor r01quitoentden} states that in the case $r=1$, this procedure always gives a denoising of $F$. 
\end{point}
\begin{example}
Consider the compact and $1$-tame functor $F\colon \QQ\rightarrow \text{\rm Vect}_K $ whose restriction to the sub-poset $\N\subset \QQ$  is described as follows. On the segment $\{ v\leq 4\}\subset \N$, $F$   is    given by  the commutative diagram:
\[\xymatrix@C=35pt@R=30pt{
 K^3 \rto^-{{\tiny
 \begin{pmatrix}
 1&0&1\\
 1&1&1
 \end{pmatrix}
 }} & K^2 \rto^-{{\tiny
 \begin{pmatrix}
 1&0\\
 0&0
 \end{pmatrix}
 }} & K^2 \rto^-{{\tiny
 \begin{pmatrix}
 1&1
 \end{pmatrix}
 }} & K \rto^-{1} & K  
 }\]
For any $w \geq 4$, $F(4\leq w)$ is an isomorphism.
Let $\{V_{\epsilon}\}_{\epsilon \in \QQ}$ be standard noise in the direction of the full cone $\QQ$. The basic barcode of $F$ has values:

\[\text{bar}(F)_{t}=\begin{cases} 4 & \text{ if } 0 \leq t \leq 1\\
2 & \text{ if } 1 < t \leq 2\\
1 & \text{ if } t > 2.
\end{cases} 
\]

The cokernel of the inclusion $F[S_{<2}]\subset F$ is isomorphic to:
\[\xymatrix@C=35pt@R=30pt{
 K^2 \rto^-{{\tiny
 \begin{pmatrix}
 1&0
 \end{pmatrix}
 }} & K \rto^-{1} & K \rto^-{1} & K \rto^-{1} & K  
 }\]
For $t > 2$, the cokernel of the inclusion $F[S_{< t}]\subset F$ is isomorphic to:
\[\xymatrix@C=35pt@R=30pt{
 K \rto^-{1} & K \rto^-{1} & K \rto^-{1} & K \rto^-{1} & K  
 }\]
\end{example}
Note that for $r>1$, the family of functors $\{ \text{coker}(F[\cS_{<t}]\subset F)\}_{0<t \in \QQ}$ is not always a denoising.
\begin{example}
Let $F\colon\QQ^2\to\text{\rm Vect}_K$ be the functor defined in Example \ref{example mono} and $\{V_{\epsilon}\}_{\epsilon\in \QQ}$ is standard noise in the direction of  $\text{Cone}(1,1)$. 
Since $F(v\leq w)$ is a monomorphism for any $v\leq w$ in $\QQ^2$, the quotient denoising of $F$ at scale $t$ is isomorphic to $F$, for any $t$ in $\QQ$.
It follows that the rank of $\text{coker}(F[\cS_{< 2}]\subset F)$ is $2$, while $\text{bar}(F)_2=1$.
\end{example}

 Given a denoising $\{\text{denoise}(F)_t\}_{0<t  \in \QQ}$, we can consider the family of multisets $\{\beta_0 \text{denoise}(F)_t\}_{0<t  \in \QQ}$. Note that in the case of quotient denoising such family of multisets has the property that if $s<t$ in $\QQ$ then $\beta_0 \text{denoise}(F)_t$ is a subset of $\beta_0 \text{denoise}(F)_s$, (see \ref{pt multiset}). 
It will be the focus of future work to study the stability of families of multisets associated to a denoising and how such invariants identify persistent features (see \cite{Multi}).

\section{Future Directions}
\begin{point}
The feature counting invariant   is   convenient in estimating the number of 
significant features of a mutidimensional persistence module relative to a given noise system.
However for identifying these features an appropriate denoising scheme (see Section~\ref{sec denoising})  is
crucial.  Existence of such a denoising follows for a given noise system whenever the following is true: 
\smallskip

\noindent
{\em  for  $G_1$ and $G_2$ in $B(F,\epsilon)$  that have the minimal rank, given necessarily  by  $\text{bar}(F)_{\epsilon}$, the
 Betti diagrams   $\beta_0G_1,\beta_0G_2\colon \QQ^r\to \N$ are $2\epsilon$-interleaved.}
 \smallskip

As of the writing of this paper we do not know for what noise system the above statement holds true. We believe however that characterising such systems is a worthwhile pursuit and we hope to return to it in the near future. A related and possible easier  question is if there exist some natural class of multidimensional persistence modules for which the above statement holds true for the standard noise. 
\end{point}

\begin{point}
A second possible avenue for future work is the construction and subsequent implementation of algorithms for computing feature counting invariants. 
An implementation of the feature counting invariant will help us in understanding which type of noise best detects persistent features given a specific application or construction.
As the computation of feature counting invariants  involves rank minimisation one would expect that this is an NP-hard problem. However for applications to data analysis  determining exactly the feature counting invariant is not necessarily the ultimate aim. Indeed it is rather the order of magnitude of the values of this  invariant that is important.  Intuitively one may think of feature counting invariants as a measure of the complexity of a space and showing that one space is ever so slightly more complex than another is probably of neglectable interest for applications. On the other hand  a big   difference between  the values of the  feature counting invariants for a given $\epsilon$  (for example $1$, $10$ or $10 000$)  has more drastic implications  on the underlying geometrical structures of the spaces at hand. Therefore the computability of the more feasible question of bounding or approximating the feature counting function is more interesting. 
\end{point} 

\begin{point}{\bf Serre localization.}
Let $\{\cS_{\epsilon}\}_{\epsilon\in \QQ}$ be a noise system. 
Recall that the $0$-th component $\cS_0$ is always a Serre subcategory in
$\text{\rm Tame}(\QQ^r,\text{\rm Vect}_K)$.
Similarly, for any $t$ in $\QQ$, so are  the unions
$\cup_{\epsilon\geq t}\cS_\epsilon$ and
$\cup_{\epsilon> t}\cS_\epsilon$.  Being Serre means that we can quotient
out these  subcategories and  localize  $\text{\rm Tame}(\QQ^r,\text{\rm Vect}_K)$
away from them.  This process is well explained in~\cite{Serre}.  
Since $\cup_{\epsilon> t}\cS_\epsilon\subset \cup_{\epsilon\geq t}\cS_\epsilon$, we get 
the following commutative diagram of functors where the vertical maps denote the appropriate localizations and 
$\phi$ is given by the universal property: 
\[\xymatrix{
& \text{\rm Tame}(\QQ^r,\text{\rm Vect}_K)\dlto_{L_{>t}}\drto^{L_{\geq t}}\\
\overline{\text{\rm Tame}(\QQ^r,\text{\rm Vect}_K)}\rrto^{\phi} & & \overline{\overline{\text{\rm Tame}(\QQ^r,\text{\rm Vect}_K)}}
}\]
We believe that understanding the relation between denoising at scale $t$ and the functor $\phi$ is a problem worth pursuing. As of the writing of this paper we were unable to use these Serre localizations to give what we could call a ``better'' conceptual explanation for the feature counting invariants or to construct new continuous invariants.
\end{point}

\section{Appendix:  functors indexed by $\N^{r}$}\label{appendixfn}
The aim of this appendix is to recall   basic   properties of the category of functors $\text{Fun}(\N^{r},\text{Vect}_K)$: we  identify its compact objects,  projective objects, and discuss minimal covers. 
Although it is standard, we decided to include this material  for self containment.


\begin{point}\label{pt semisimpl}
{\bf Semisimplicity.}
A functor $F\colon \N^{r}\to \text{Vect}_K$ is  {\bf semi-simple} if $F(v<w)$ is the zero homomorphism for any $v<w$.
For example the unique functor  $U_v\colon \N^{r}\to \text{Vect}_K$ such that $U_v(v)=K$ and $U_v(w)=0$ if $w\not =v$
is semi-simple.  Any semi-simple functor is a direct sum of functors of the form $U_v$ and hence can be described
uniquely as   $\oplus_{v\in \N^r} (U_v\otimes V_v)$ for some sequence of
vector spaces $V_v$.
\end{point}

\begin{point}\label{pt radical}
{\bf Radical.}
Let $F\colon \N^{r}\to \text{Vect}_K$ be a functor. Define $\text{rad}(F)(v)$ to be the subspace of $F(v)$ given by the sum
of all the images of $F(u< v)\colon F(u)\to F(v)$ for all $u< v$. For any $v\leq w$,
the homomorphism $F(v\leq w)\colon F(v)\to F(w)$ maps the subspace $\text{rad}(F)(v)\subset F(v)$ into 
  $\text{rad}(F)(w)\subset F(w)$. Thus  these subspaces form a subfunctor denoted by
$\text{rad}(F)\subset F$.   A natural  transformation $\phi\colon F\to G$ 
maps the subfunctor $\text{rad}(F)\subset F$ into the subfunctor $\text{rad}(G)\subset G$. The resulting
natural transformation is denoted by $\text{rad}(\phi)\colon \text{rad}(F)\to \text{rad}(G)$.
Note that for any functor $F\colon \N^{r}\to \text{Vect}_K$, $F/\text{rad}(F)$ is semisimple. 


\begin{prop}\label{prop epi}
A natural transformation $\phi\colon F\to G$ in $\text{\rm Fun}(\N^{r},\text{\rm Vect}_K)$  is an epimorphism if and only if its composition with the quotient
$\pi\colon G\to G/\text{\rm rad}(G)$ is an epimorphism.
\end{prop}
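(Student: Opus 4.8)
The plan is to prove this via a Nakayama-type argument, using the fact that $\N^r$ has the "minimal element" property: any nonempty subset of $\N^r$ has a minimal element (see~\ref{point NRposets}). The forward direction is immediate: if $\phi$ is an epimorphism then so is $\pi\phi$, being a composition of epimorphisms (epimorphisms in a functor category into $\text{Vect}_K$ are exactly the object-wise surjections). So the content is the converse.

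Assume $\pi\phi\colon F\to G/\text{rad}(G)$ is an epimorphism; I want to show $\phi_v\colon F(v)\to G(v)$ is surjective for every $v$ in $\N^r$. Suppose not, and let $S=\{v\in\N^r\mid \phi_v\text{ is not surjective}\}$, a nonempty subset of $\N^r$; pick $v$ minimal in $S$. The key observation is that $\text{rad}(G)(v)$ is, by definition, the sum of the images of $G(u<v)$ over all $u<v$; but for each such $u$ we have $u\notin S$ by minimality, so $\phi_u$ is surjective, and the commuting square $G(u<v)\circ\phi_u=\phi_v\circ F(u<v)$ shows that the image of $G(u<v)$ is contained in the image of $\phi_v$. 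Hence $\text{rad}(G)(v)\subseteq \text{Im}(\phi_v)$. On the other hand, surjectivity of $(\pi\phi)_v$ means $\text{Im}(\phi_v)+\text{rad}(G)(v)=G(v)$, i.e. $\text{Im}(\phi_v)$ surjects onto $G(v)/\text{rad}(G)(v)$. Combining the two gives $\text{Im}(\phi_v)=G(v)$, contradicting $v\in S$. Therefore $S=\emptyset$ and $\phi$ is an epimorphism.

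The only step requiring a little care is the claim that $\text{rad}(G)(v)\subseteq\text{Im}(\phi_v)$ — this is exactly where the well-foundedness of $\N^r$ (every nonempty subset has a minimal element) is used, and it is the reason the statement is phrased for $\N^r$ rather than $\QQ^r$. Everything else (commuting squares, the elementary fact $A+B=V$ together with $B\subseteq A$ implies $A=V$ in vector spaces) is routine. I would present the argument essentially as above, taking a minimal counterexample; no serious obstacle is anticipated.
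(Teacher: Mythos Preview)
Your proof is correct and follows essentially the same approach as the paper's: both argue by choosing a minimal element in the set where $\phi$ fails to be surjective, use minimality to show that $\text{rad}(G)$ at that point lies in the image of $\phi$, and combine this with surjectivity of $\pi\phi$ to reach a contradiction. The only cosmetic difference is that the paper packages the step $\text{rad}(G)(v)\subseteq\text{Im}(\phi_v)$ as the statement that the induced map $\text{rad}(\phi)(v)\colon\text{rad}(F)(v)\to\text{rad}(G)(v)$ is surjective and then applies a short-exact-sequence argument, whereas you argue directly with images; the content is identical.
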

\begin{proof}
Since $\pi$ is an epimorphism, if $\phi$ is an epimorphism, then so is $\pi\phi$. Assume that  $\pi\phi$ is an epimorphism.
If $\phi$ is not an epimorphism, then the set of   $v$ in $\N^r$ for which $\phi(v)\colon F(v)\to G(v)$
is not an epimorphism is not empty.  Let us choose a minimal element $u$ in this set (see~\ref{point NRposets})
and consider a commutative diagram with exact rows:
\[\xymatrix@R=15pt{
0\rto & \text{rad}(F)(u) \dto_{\text{rad}(\phi)(u)}\rto & F(u)\rto\dto^{\phi(u)} & F(u)/\text{rad}(F)(u)\dto \rto & 0\\
0\rto &  \text{rad}(G)(u) \rto & G(u)\rto^-{\pi} & G(u)/\text{rad}(G)(u) \rto & 0
}\]
Minimality of $u$ implies
that $\text{rad}(\phi)(u)$ is an epimorphism.  Since  the  right side   vertical homomorphism is also  an epimorphism, it follows that so is the middle one,
contradicting the assumption about $\phi(u)$. 
\end{proof}
Applying \ref{prop epi} to $0\hookrightarrow F$, we get that  $\text{rad}(F)=F$ if and only if $F=0$. The very same argument as in the proof of~\ref{prop epi} can also be used to show:

\begin{prop}\label{prop fdimvalues}
A functor $G\colon\N^r\to \text{\rm Vect}_K$ has finite dimensional values   if and only if $G/\text{\rm rad}(G)$
has 
finite dimensional values.
\end{prop}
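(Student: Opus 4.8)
The plan is to mirror the proof of Proposition~\ref{prop epi}. One implication is immediate: since $G/\text{rad}(G)$ is a quotient of $G$, each of its values is a quotient of a finite dimensional vector space, hence finite dimensional, whenever $G$ has finite dimensional values.

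For the converse I would argue by contradiction. Suppose $G/\text{rad}(G)$ has finite dimensional values but $G$ does not, so that the set $S:=\{v\in\N^r\ |\ \text{dim}_K G(v)=\infty\}$ is non-empty. Since every non-empty subset of $\N^r$ has a minimal element (see~\ref{point NRposets}), pick a minimal $v\in S$. Consider the short exact sequence
\[0\to \text{rad}(G)(v)\to G(v)\to G(v)/\text{rad}(G)(v)\to 0.\]
The right-hand term is finite dimensional by hypothesis. By definition $\text{rad}(G)(v)$ is the sum of the images of $G(w<v)\colon G(w)\to G(v)$ taken over all $w<v$; there are only finitely many such $w$ (each coordinate of $w$ is bounded by the corresponding coordinate of $v$), and each $G(w)$ is finite dimensional by minimality of $v$ in $S$. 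Hence $\text{rad}(G)(v)$ is a finite sum of finite dimensional subspaces, so it is finite dimensional, and therefore so is $G(v)$ --- contradicting $v\in S$. This forces $S=\varnothing$.

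There is no real obstacle here: the argument is the exact analogue of the proof of Proposition~\ref{prop epi}, with ``$\phi$ is an epimorphism at $v$'' replaced by ``$G$ has finite dimensional value at $v$'', and with the induction on minimal elements of $\N^r$ doing all the work. The only mildly delicate point is the finiteness of $\{w\in\N^r\ |\ w<v\}$, which holds because $w<v$ forces $w_i\le v_i$ for every $i$, leaving at most $\prod_i(v_i+1)$ possibilities.
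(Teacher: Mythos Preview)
Your proof is correct and follows exactly the approach the paper intends: it merely says ``the very same argument as in the proof of Proposition~\ref{prop epi} can also be used,'' and your contradiction via a minimal element of $\{v\in\N^r\mid \dim_K G(v)=\infty\}$ together with the short exact sequence at $v$ is precisely that argument. Your explicit remark that $\{w\in\N^r\mid w<v\}$ is finite is the one extra ingredient needed here (to ensure a finite sum of finite-dimensional images is finite-dimensional) that was not required in the epimorphism version, and you have handled it correctly.
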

\end{point}

\begin{point}
{\bf Minimal covers.}  
Recall that   $\phi\colon F\to  G$ in  $\text{\rm Fun}(\N^{r},\text{\rm Vect}_K)$  is called  minimal  if
any $f\colon F\to F$, such that $\phi=\phi f$, is an isomorphism (see~\ref{point minimal}). For example:

\begin{prop}\label{prop exminepi}
Let   $\phi\colon F\to G$ in  $\text{\rm Fun}(\N^{r},\text{\rm Vect}_K)$ be a natural transformation such that 
the induced morphism on the quotients  $[\phi]\colon F/\text{\rm rad}(F)\to G/\text{\rm rad}(G)$  is an isomorphism and either the values of $F$ are finite dimensional or $F$ is free. 
Then  $\phi$ is  minimal.
 \end{prop}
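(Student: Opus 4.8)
The plan is to show that any $f\colon F\to F$ with $\phi = \phi f$ must be an isomorphism, and the strategy is to reduce the question to the semisimple quotient and then lift the resulting information back up. First I would consider the natural transformation $[f]\colon F/\text{rad}(F)\to F/\text{rad}(F)$ induced by $f$. Since $\phi = \phi f$, passing to quotients gives $[\phi] = [\phi][f]$, and because $[\phi]$ is an isomorphism we get $[f] = \text{id}$. Thus $f$ induces the identity on $F/\text{rad}(F)$. Equivalently, $(\text{id}_F - f)$ maps $F$ into $\text{rad}(F)$; writing $g := \text{id}_F - f$, we have that for every $v$ the image of $g(v)\colon F(v)\to F(v)$ lies in $\text{rad}(F)(v)$.

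The key step is then to argue that $f = \text{id}_F - g$ is an isomorphism, which I would do objectwise by a minimality/nilpotence argument on the poset $\N^r$. Fix $v$ in $\N^r$. In the finite-dimensional case, consider the iterates $g^k$. The crucial observation is that $g$ raises the ``radical depth'': $g(v)$ lands in $\text{rad}(F)(v)$, which is the sum of images $F(u<v)$; applying $g$ again and using naturality, $g^2(v)$ lands in the sum of images of $F(u<v)$ restricted further, and iterating, $g^k(v)$ lands in the sum of images of $F(u\le v)$ coming from chains $u_0 < u_1 < \cdots < u_k \le v$. Since any strictly increasing chain in $\N^r$ below $v$ has length bounded by $\|v\|_1 := v_1 + \cdots + v_r$, we get $g^{k}(v) = 0$ for $k$ large enough (depending on $v$). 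Hence $g(v)$ is nilpotent on the finite-dimensional space $F(v)$, so $f(v) = \text{id} - g(v)$ is invertible with inverse $\sum_{k\ge 0} g(v)^k$, which is a finite sum. As this holds for every $v$, $f$ is a natural isomorphism. In the case where $F$ is free rather than finite-dimensional, I would instead use that a free functor $\oplus_{i} K(i,-)\otimes V_i$ is generated by elements sitting in the various $F(i)$ for $i$ in the support; on the quotient $F/\text{rad}(F)$ these generators form a basis of the semisimple functor, $f$ fixes them modulo $\text{rad}(F)$, and one checks that an endomorphism of a free functor fixing a generating set modulo the radical is an isomorphism — here one can use the same nilpotent-perturbation idea applied to the matrix of $f$ with respect to the generators, noting that the ``off-diagonal'' and radical-valued corrections strictly increase degree in $\N^r$ and hence are locally nilpotent.

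The main obstacle I expect is making the nilpotence/local-finiteness argument clean and uniform, rather than the conceptual content, which is standard (this is the usual Nakayama-type statement for graded or poset-indexed modules). Concretely, one must be careful that although $g^k(v)$ vanishes for $k$ depending on $v$, this bound is not uniform over all of $\N^r$, so the inverse $\sum_k g^k$ exists only as a natural transformation that is ``locally finite'' — one checks it is well defined objectwise and natural, which suffices. In the free case the extra care is in organizing the generators: one should pick a minimal set of generators (legitimate since $F$ free means $F/\text{rad}(F)$ is semisimple with the right components), express $f$ via its action on them, and verify the correction term lowers nothing and is eventually killed. I would present the finite-dimensional case in full and remark that the free case follows by the same device, since the paper only needs both cases to feed into the existence and uniqueness of minimal covers established just afterward.
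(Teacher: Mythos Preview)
Your argument is correct and in fact proves more than the proposition claims: the nilpotence argument you give for the finite-dimensional case works for \emph{any} $F$, with no hypothesis needed. Indeed, once you know $g=\text{id}_F-f$ lands in $\text{rad}(F)$, naturality gives $g^k(F(v))\subset \text{rad}^k(F)(v)$, and since strictly increasing chains below $v$ in $\N^r$ have length at most $v_1+\cdots+v_r$, the operator $g(v)$ is nilpotent on each $F(v)$ regardless of dimension. Thus your separate treatment of the free case is unnecessary, and the somewhat vague ``one checks'' there is harmless because the first argument already covers it.

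The paper's proof is quite different and shorter, but relies on an earlier lemma and genuinely uses the hypothesis. From $[\phi]=[\phi][f]$ it concludes $[f]$ is an isomorphism, then invokes Proposition~\ref{prop epi} (a map is an epimorphism iff it is so modulo radicals) to get that $f$ is an epimorphism. The two cases are then dispatched separately: if $F$ has finite-dimensional values, an epimorphism of each $F(v)$ is an isomorphism; if $F$ is free, the epimorphism $f$ splits, $F\simeq F\oplus \ker(f)$, and one argues $\ker(f)/\text{rad}(\ker(f))=0$, hence $\ker(f)=0$. So the paper trades your direct Nakayama-type nilpotence computation for a cleaner appeal to a previously established epimorphism criterion, at the cost of needing the extra hypothesis to upgrade epi to iso. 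Your route is more self-contained and yields a stronger statement; the paper's route is quicker in context.
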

\begin{proof}
Let $f\colon F\to F$ be such that $\phi=\phi f$.  By quotienting  out radicals we obtained a commutative triangle:
\[\xymatrix@R=13pt{
 F/\text{\rm rad}(F)\drto_{[\phi]}  \rrto^-{[f]}& & F/\text{\rm rad}(F)\dlto^{[\phi]} \\
 & G/\text{\rm rad}(G)
}\]
  Since $[\phi]$ is an isomorphism, then so is
$[f]$. We can then use~\ref{prop epi} to conclude that $f\colon F\to F$ is an epimorphism.   As epimorphisms of finite dimensional vector spaces are isomorphisms, under the assumption that $F$ has finite dimensional values, we can conclude that  $f$ is an isomorphism.  In the case  $F$  is free, the map $f$ splits and $F$  is isomorphic to $F\oplus \text{ker}(f)$. It  follows that  $\text{ker}(f)/\text{rad}(\text{ker}(f))=0$ and hence $\text{ker}(f)=0$ showing that in this case $f$ is also an isomorphism. 
\end{proof}
\begin{point}
Let $V$ be a vector space. Consider the functor $K(v,-)\otimes V\colon \N^r\to \text{Vect}_K$. The Yoneda isomorphism
(see~\cite{Categories} or ~\cite{HomologicalAlgebra})
states that,  the function   assigning to a natural
transformation  
$\phi\colon K(v,-)\otimes V\to G$ the homomorphism $V\to G(i)$ given by  $x\mapsto \phi(v)(\text{id}_v\otimes x)$
 is an isomorphism between 
 $\text{Nat}(K(v,-)\otimes V,G)$ and $\text{Hom}(V,G(v))$.
A direct consequence of this isomorphism is the fact that $G\mapsto \text{Nat}(K(v,-)\otimes V,G)$ is an exact  operation (i.e.,  $K(v,-)\otimes V$ is a projective object in $\text{\rm Fun}(\N^r,\text{\rm Vect}_K)$, see~\cite[Section 2.2]{HomologicalAlgebra}).  

\end{point}

\begin{point}\label{pt invsemisimple}
Consider a functor $G\colon \N^{r}\to \text{\rm Vect}_K$. Since $G/\text{rad}(G)$ is semisimple,
it is isomorphic to  $\oplus_{v\in \N^r} (U_v\otimes V_v)$ for some sequence of
vector spaces $V_v$.
Let $F=\oplus_{v\in \N^r} K(v,-)\otimes V_v$. Note that $F/\text{rad}(F)$ is also isomorphic to
$\oplus_{v\in \N^r} (U_v\otimes V_v)$. Since $F$ is projective (see~\ref{point freefunctors}), then there is a natural transformation
$\phi\colon F\to G$ making the following diagram commutative:
\[\xymatrix@R=13pt{
F\rto^{\phi}\dto_-{\pi} & G\dto^-{\pi}\\
F/\text{rad}(F)\rto^{\simeq} & G/\text{rad}(G)
}\]  
where $\pi$'s are the quotient transformations.
According to~\ref{prop exminepi} the natural transformation $\phi\colon F\to G$ is  minimal. Since it is also an epimorphism
(see~\ref{prop epi}) and $F$ is free, this map is a minimal cover (see~\ref{point minimal}). Thus all functors in $\text{\rm Fun}(\N^{r},\text{\rm Vect}_K)$ admit a minimal cover.
Moreover  $G$  is of finite type (see~\ref{point minimal}) if and only if
$V_v$ is finite dimensional for any $v$ in $\N^r$. Its support (see~\ref{point minimal}) is given by the subset of $\N^r$ of all elements $v$ for which $V_v\not= 0$. If $G$ is of finite rank (see~\ref{point minimal}), then its rank is given by $\sum_{v\in \N^r}\text{\rm dim}_KV_v$.
If   $G$ÃÂ   is of finite type, then the multiset $\N^r\ni v\mapsto \text{\rm dim}_KV_v\in \N$ is the $0$-Betti diagram of $G$ (see~\ref{point minimal}).   Moreover $F$ has a finite set of generators if and only if it is of finite rank.
Note that being of finite type, of finite rank, and the invariants $\text{supp}(G)$, $\text{rank}(G)$ and  $\beta_0G$ depend only on $G/\text{rad}(G)$.  However, the choice of a set of minimal generators for $G$ is equivalent to a choice of a minimal cover $F\to G$ and hence it   contains much more information, than the semisimple functor $G/\text{rad(G)}$. 
 \end{point}

\begin{cor}\label{cor minpresentation}
A natural transformation   $\psi\colon H\to G$ in  $\text{\rm Fun}(\N^{r},\text{\rm Vect}_K)$  is a minimal cover if and only if $H$  is free and the induced morphism on the quotients  $[\psi]\colon H/\text{\rm rad}(H)\to G/\text{\rm rad}(G)$  is an isomorphism.
\end{cor}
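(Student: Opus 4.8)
The plan is to deduce this directly from the characterization of minimality in Proposition~\ref{prop exminepi}, the epimorphism criterion in Proposition~\ref{prop epi}, and the existence and uniqueness of minimal covers recorded in~\ref{pt invsemisimple} and~\ref{point minimal}. Functoriality of the assignment $F\mapsto F/\text{\rm rad}(F)$, and hence of the induced maps $[\phi]$ on semisimplifications, is already available from~\ref{pt radical}.

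For the ``if'' direction I would argue as follows. Suppose $H$ is free and $[\psi]\colon H/\text{\rm rad}(H)\to G/\text{\rm rad}(G)$ is an isomorphism. Since $H$ is free, Proposition~\ref{prop exminepi} applies verbatim and gives that $\psi$ is minimal. It remains to check that $\psi$ is an epimorphism: as $[\psi]$ is an isomorphism it is in particular an epimorphism, so the composite of $\psi$ with the quotient transformation $G\to G/\text{\rm rad}(G)$, which equals $[\psi]$ composed with the quotient transformation $H\to H/\text{\rm rad}(H)$, is an epimorphism; Proposition~\ref{prop epi} then yields that $\psi$ itself is an epimorphism. A free functor serving as the domain of a minimal epimorphism is by definition a minimal cover (see~\ref{point minimal}), which finishes this implication.

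For the ``only if'' direction, suppose $\psi\colon H\to G$ is a minimal cover; by definition $H$ is free, so only the statement about $[\psi]$ needs proof. Here I would invoke~\ref{pt invsemisimple}, which produces a minimal cover $\phi\colon F\to G$ with $F=\oplus_{v\in\N^r}K(v,-)\otimes V_v$ and for which $[\phi]\colon F/\text{\rm rad}(F)\to G/\text{\rm rad}(G)$ is an isomorphism by construction. By the uniqueness clause in~\ref{point minimal} there is an isomorphism $f\colon F\to H$ with $\psi f=\phi$. Passing to quotients by radicals gives $[\psi]\circ[f]=[\phi]$, where $[f]$ is an isomorphism because $f$ is; since $[\phi]$ is an isomorphism, so is $[\psi]=[\phi]\circ[f]^{-1}$.

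I do not expect a genuine obstacle: the corollary is a bookkeeping consequence of results already in place. The one point that requires care is that \emph{every} minimal cover, not merely the distinguished one constructed in~\ref{pt invsemisimple}, induces an isomorphism on semisimplifications; this is exactly why the uniqueness part of~\ref{point minimal} (rather than just existence) is the key ingredient in the ``only if'' direction.
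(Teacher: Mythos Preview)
Your proof is correct and follows essentially the same approach as the paper: the ``if'' direction is exactly the paper's invocation of Proposition~\ref{prop exminepi} and Proposition~\ref{prop epi}, and the ``only if'' direction compares $\psi$ with the distinguished minimal cover $\phi$ of~\ref{pt invsemisimple} via the uniqueness clause in~\ref{point minimal}, just as the paper does. The only cosmetic difference is the direction of the comparison isomorphism ($f\colon F\to H$ versus the paper's $g\colon H\to F$), which is immaterial.
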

\begin{proof}
Assume $\psi\colon H\to G$ is a minimal cover. Let $V_v$ be a sequence of vector spaces such that
$G/\text{\rm rad}(G)$ is isomorphic to $\oplus_{v\in \N^r} (U_v\otimes V_v)$.  Consider $F=\oplus_{v\in \N^r} K(v,-)\otimes V_v$ and a minimal cover $\phi \colon F\to G$ described above. All these natural transformations fit into the following commutative diagram:
\[\xymatrix@R=15pt{
H\rto^(.7){\psi} \ar@/^15pt/[rr]|{g} \dto^-{\pi} &G\dto^-{\pi} & F\lto_(.7){\phi}\dto^-{\pi}\\
H/\text{rad}(H)\rto^{[\psi]}  \ar@/_15pt/[rr]|{[g]}& G/\text{rad}(G) &F/\text{rad}(F)\lto_{[\phi]}
}\]
Note that $[\phi]$ is an isomorphism  by construction and  $g$ is an isomorphism by the minimality
assumption on $\psi$ and $\phi$. It then follows that $[g]$ is an isomorphism and consequently so is $\psi$.
That proves one implication of the corollary. The other implication follows from~\ref{prop exminepi} and~\ref{prop epi}. 
\end{proof}

\begin{cor}
Any projective object in  $\text{\rm Fun}(\N^{r},\text{\rm Vect}_K)$ is free.
  \end{cor}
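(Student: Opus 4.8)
The plan is to reduce everything to the existence of minimal covers, guaranteed by~\ref{pt invsemisimple}. Let $P\colon\N^r\to\text{Vect}_K$ be a projective object and choose a minimal cover $\phi\colon F\to P$: by definition $F$ is free and $\phi$ is both an epimorphism and minimal in the sense of~\ref{point minimal}. It then suffices to prove that $\phi$ is an isomorphism, for then $P$ is isomorphic to the free functor $F$.

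First I would use the projectivity of $P$. Since $\phi$ is an epimorphism, the identity $\text{id}_P\colon P\to P$ lifts through $\phi$, yielding a natural transformation $s\colon P\to F$ with $\phi s=\text{id}_P$. Next consider the endomorphism $s\phi\colon F\to F$. It satisfies $\phi\circ(s\phi)=(\phi s)\circ\phi=\phi$, so the minimality of $\phi$ forces $s\phi$ to be an isomorphism. In particular $(s\phi)^{-1}s$ is a left inverse of $\phi$, so $\phi$ is a monomorphism. A natural transformation in $\text{Fun}(\N^r,\text{Vect}_K)$ that is simultaneously a monomorphism and an epimorphism is an isomorphism, because this category is abelian with exactness tested object-wise in $\text{Vect}_K$ (see~\ref{point functors}). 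Hence $\phi$ is an isomorphism and $P\cong F$ is free.

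I do not expect a real obstacle: the whole argument is a direct application of the defining property of a minimal cover, once the section $s$ is produced by projectivity. The only points deserving care are that arbitrary (not necessarily finite type) functors admit minimal covers --- which is precisely the content of~\ref{pt invsemisimple} --- and the passage ``monomorphism and epimorphism $\Rightarrow$ isomorphism'', which is legitimate because $\text{Fun}(\N^r,\text{Vect}_K)$ is abelian with exactness computed object-wise (see~\ref{point functors}). If one wished to avoid citing minimality directly, one could instead note that $s$ splits $F$ as $\text{Im}(s)\oplus\ker(\phi)$ with $\ker(\phi)$ a direct summand of a free functor, whence $\ker(\phi)/\text{rad}(\ker(\phi))=0$ and so $\ker(\phi)=0$ by~\ref{prop epi}; but the minimal-cover route above is shorter and self-contained.
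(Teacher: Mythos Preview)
Your proof is correct and follows the same overall strategy as the paper: take a minimal cover $\phi\colon F\to P$, use projectivity of $P$ to obtain a section $s\colon P\to F$ with $\phi s=\text{id}_P$, and conclude that $P\cong F$ is free. The difference lies only in the final step. The paper argues that $s$ is an isomorphism: since $\phi$ is a minimal cover, Corollary~\ref{cor minpresentation} gives that $[\phi]\colon F/\text{rad}(F)\to P/\text{rad}(P)$ is an isomorphism, hence so is $[s]=[\phi]^{-1}$, and then Proposition~\ref{prop epi} forces $s$ to be an epimorphism (it is already a monomorphism because it has a left inverse). You instead argue that $\phi$ is an isomorphism, by observing that $s\phi\colon F\to F$ satisfies $\phi(s\phi)=\phi$ and invoking the \emph{definition} of minimality directly to conclude $s\phi$ is an isomorphism, whence $\phi$ is a monomorphism. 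Your route is slightly more economical in that it avoids the radical-quotient characterization of minimal covers and appeals only to the raw definition from~\ref{point minimal}; the paper's route, on the other hand, illustrates how the machinery of~\ref{cor minpresentation} and~\ref{prop epi} is put to work. Either way the argument is short and the conclusion is the same.
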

  \begin{proof}
  Let $P\colon \N^{r}\to\text{\rm Vect}_K$ be projective and $\phi\colon F\to P$ be a minimal cover.
  As $P$ is projective, there is a natural transformation $s\colon P\to F$ such that $\phi s=\text{id}_P$.
  This implies that $s$ is a monomorphism.
  According to~\ref{prop epi} and \ref{cor minpresentation} $s$ is also an epimorphism. We can conclude that $s$ is an isomorphism and hence
  $P$  is free.  
 \end{proof}
 \end{point}

\begin{point}\label{pt compact}
{\bf Compact objects.}  Recall that an object $A$ in an abelian category is compact if, for any sequence
of monomorphisms $A_1\subset A_2\subset\cdots \subset A$  such that 
$A= \text{colim}A_i$, there is $k$ for which  $A_k=A$ (see \cite{compact}).

\begin{prop}\label{prop comactfunctors}
Let  $F\colon \N^{r}\to  \text{\rm Vect}_K$ be a functor. The following are equivalent:
\begin{enumerate}
\item[(a)] $F$ is compact in  $\text{\rm Fun}(\N^{r},\text{\rm Vect}_K)$;
\item[(b)] $F$ is of finite rank;
\item[(c)] $F$ fulfills the following two conditions:
(1) $F(v)$ is finite dimensional for any $v$;   (2) there is a natural number
$n$  such that, for any $v=(v_1,\ldots v_r)$ in $\N^{r}$,  the homomorphism
 $F((\text{\rm min}(n,v_1),\ldots, 
\text{\rm min}(n,v_r))\leq v)$ is an isomorphism.
\end{enumerate}
\end{prop}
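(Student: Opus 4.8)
```latex
\noindent\textbf{Proof proposal.}
The plan is to prove the cycle of implications (b)$\Rightarrow$(c)$\Rightarrow$(a)$\Rightarrow$(b). First I would establish (b)$\Rightarrow$(c). Suppose $F$ is of finite rank, so it admits a minimal cover $\phi\colon P\to F$ with $P=\bigoplus_{k=1}^{m}K(v_k,-)$ a free functor of finite rank. Since $\phi$ is an epimorphism with semisimple cokernel data controlled by the $v_k$, the values $F(v)$ are finite dimensional (each $F(v)$ is a quotient of $P(v)=\bigoplus_k K\cdot\mathrm{mor}_{\N^r}(v_k,v)$, which is finite dimensional). For condition (2), set $n:=1+\max_{k}\max_i (v_k)_i$, i.e.\ strictly larger than every coordinate of every generator degree. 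Then for any $v$ and the truncation $v':=(\min(n,v_1),\ldots,\min(n,v_r))$, one has $v'\le v$ and, crucially, $\mathrm{mor}_{\N^r}(v_k,v')\to\mathrm{mor}_{\N^r}(v_k,v)$ is a bijection for every $k$: if $v_k\le v$ then $v_k\le v'$ already, because each coordinate $(v_k)_i<n$ so $\min(n,v_i)\ge (v_k)_i$ whenever $v_i\ge (v_k)_i$. Hence $P(v'\le v)$ is an isomorphism, and since $\phi$ is natural and surjective, a short diagram chase (using that the kernel of $\phi$ is generated in degrees $\le n$ as well after possibly enlarging $n$ — here one uses finite rank of a minimal presentation, or more simply that $\N^r$-indexed finite rank functors have finite presentations by Noetherianity of $K[x_1,\dots,x_r]$) shows $F(v'\le v)$ is an isomorphism. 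So I would take $n$ large enough to bound both the generator and relation degrees of $F$.

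Next, (c)$\Rightarrow$(a). Assume $F$ satisfies (1) and (2) with the constant $n$. Let $A_1\subset A_2\subset\cdots\subset F$ be a chain of subfunctors with $\mathrm{colim}_i A_i=F$; since colimits in $\mathrm{Fun}(\N^r,\mathrm{Vect}_K)$ are computed objectwise, this means $\bigcup_i A_i(v)=F(v)$ for every $v$. Consider the finite set $T:=\{v\in\N^r\mid v_i\le n\text{ for all }i\}$. Each $F(v)$ for $v\in T$ is finite dimensional by (1), so there is some index $i_v$ with $A_{i_v}(v)=F(v)$; taking $k:=\max_{v\in T} i_v$ (a finite max) gives $A_k(v)=F(v)$ for all $v\in T$. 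Now for arbitrary $w\in\N^r$, let $w'$ be its truncation into $T$; by (2) the map $F(w'\le w)$ is an isomorphism, and it factors through $A_k$ via $A_k(w')=F(w')\xrightarrow{\ \cong\ }A_k(w)\hookrightarrow F(w)$, forcing $A_k(w)=F(w)$. Thus $A_k=F$, proving compactness.

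Finally, (a)$\Rightarrow$(b). Let $F$ be compact; I want $F$ of finite rank. By~\ref{pt invsemisimple}, $F$ admits a minimal cover $\phi\colon P\to F$ with $P=\bigoplus_{v\in\N^r}K(v,-)\otimes V_v$, and $F$ is of finite rank iff only finitely many $V_v$ are nonzero and each is finite dimensional. Enumerate the (at most countably many) pairs $(v,\text{basis vector of }V_v)$ and let $P_i\subset P$ be the free subfunctor spanned by the first $i$ of these generators; then $P=\mathrm{colim}_i P_i$, hence $F=\mathrm{colim}_i \phi(P_i)$ as a colimit of the subfunctors $\phi(P_i)\subset F$. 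Compactness gives $k$ with $\phi(P_k)=F$, i.e.\ $F$ is generated by finitely many elements, so the composite $P_k\hookrightarrow P\xrightarrow{\phi}F$ is an epimorphism. By minimality of $\phi$ (and~\ref{prop epi} applied to the induced map on quotients by radicals), this forces $P_k=P$, so $P$ has finite rank and therefore so does $F$. I expect the main obstacle to be the diagram chase in (b)$\Rightarrow$(c) certifying that a single $n$ bounds the degrees needed for $F(v'\le v)$ to be an isomorphism — this is where one genuinely needs that finite rank functors over $\N^r$ are finitely presented, i.e.\ the Noetherian property of the polynomial ring, rather than merely finitely generated; the other two implications are essentially formal manipulations with objectwise colimits and finite-dimensionality.
```
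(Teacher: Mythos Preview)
Your argument is correct, and for (c)$\Rightarrow$(a) it matches the paper almost verbatim. The other two implications take genuinely different routes from the paper, and the comparison is instructive.

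For (b)$\Rightarrow$(c), you invoke the Hilbert basis theorem to pass from finite generation to finite \emph{presentation}, then take $n$ bounding both generator and relation degrees so that the truncation maps are isomorphisms on a free presentation and hence on $F$. This is clean and short, but it imports Noetherianity of $K[x_1,\dots,x_r]$ as a black box. The paper instead gives a self-contained argument: it builds the truncated functors $F^i(v):=F(v^i)$, uses finite generation to find $m$ with $F^m\twoheadrightarrow F$, and then runs a stabilisation argument on the finite-dimensional space $\bigoplus_{v\le (m,\dots,m)}F(v)$ to locate $n$ with $F^n\xrightarrow{\ \cong\ }F$. In effect the paper \emph{reproves} the needed consequence of Noetherianity inside the proof, which is the whole point of the appendix (``for self containment''). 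Your route is faster; the paper's buys independence from Hilbert's theorem.

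For (a)$\Rightarrow$(b), the paper contraposes: if $F/\mathrm{rad}(F)$ is not of finite rank it exhibits a strictly increasing chain with colimit $F/\mathrm{rad}(F)$, then pulls this back along $\pi$ to contradict compactness of $F$. You instead push forward a chain of finite-rank free summands $P_k\subset P$ through the minimal cover and use compactness to get $\phi(P_k)=F$ for some $k$; your conclusion $P_k=P$ then follows by lifting $P\to P_k$ through projectivity and applying minimality of $\phi$. Both are fine. One small wrinkle in your version: ``at most countably many pairs'' is not literally true if some $V_v$ has uncountable dimension. You can repair this easily---e.g.\ fix a basis $B$ of $\bigoplus_v V_v$, pick a countable subset $\{b_1,b_2,\dots\}\subset B$, and let $P_k$ be generated by $(B\setminus\{b_j:j>k\})$; then $P_k\subsetneq P_{k+1}$ and $\bigcup_k P_k=P$---but as written the enumeration step needs this patch.
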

\begin{proof}
Assume  $F$ is compact. If $F$ is not of finite rank, then there is a sequence
of proper subfunctors $G_1\subset G_2\subset \cdots \subset F/\text{rad}(F)$ such that 
$\text{colim}G_i=F/\text{rad}(F)$. Let $F_i:=\pi^{-1}(G_i)$ where $\pi\colon F\to F/\text{rad}(F)$  is the quotient
transformation. We obtain   a  sequence of proper subfunctors $F_1\subset F_2\subset\cdots\subset F$ such that $\text{colim}F_i=F$ contradicting the fact that $F$ is compact. This proves the implication 
(a)$\Rightarrow$(b).

Assume   $F$ is of finite rank.  The first condition in (c) follows from~\ref{prop fdimvalues}.
Let $i$ be a natural number. For   $v=(v_1,\ldots,v_r)$  in $\N^{r}$, set
$v^{i}:=(\text{min}(i,v_1),\ldots, 
\text{min}(i,v_r))$.  Note  if $v\leq w$, then $v^{i}\leq w^{i}$. Define $F^i(v):=F(v^{i})$ and
$F^i(v\leq w):=F(v^{i}\leq w^{i})$. In this way we obtain a functor  $F^i\colon \N^{r}\to  \text{\rm Vect}_K$.
If $i\leq j$, then $v^{i}\leq v^{j}\leq v$. Let $F^{i\leq j}\colon F^i\to F^j$ be the natural transformation given by the homomorphisms 
$F(v^{i}\leq v^{j})$ and  $F^{i< \infty}\colon F^i\to F$  be the natural transformation
given by  the homomorphisms $F(v^{i}\leq v)$. Since
$\text{colim}_{i}(F^{i<\infty})\colon \text{colim} (F^i)\to F$ is an isomorphism and  $F$ is of finite rank, there is
$m$ such that  $F^{m< \infty}\colon F^m\to F$ is surjective. From the definition it follows that 
$F^{m\leq i}\colon F^m\to F_i$ is also surjective.  Let $T:=\{v\in \N^{r}\ |\ v=v^m\}$, $v\in T$, and $i\geq m$.
Note that $T$ is a  finite set of all elements $v$ in $\N^{r}$ such that $v\leq (m,\ldots,m)$. 
 Define:
 \[K_{i,v}=\cup_{v\leq w}\text{ker}\left(F^m(v)=F(v)\xrightarrow{F(v\leq w^{i})} F(w^{i})=F^i(w)\right)\] 
 \[K_i:=\bigoplus_{v\in T} K_{i,v}\subset \bigoplus_{v\in T} F(v)\]
Note that $K_i\subset K_j$ if $i\leq j$. Since $T$ is  finite, the space $\oplus_{v\in T} F(v)$ is finite dimensional, and hence 
there is $n$ such that $K_n=K_i$ for any $i\geq n$. For this $n$, the natural transformation  $F^{n\leq i}\colon F^n\to F^i$   is an isomorphism. It follows that so is $F^{n<\infty}\colon F^n\to F$ proving the second condition
in (c).

 Assume  (c). Let $n$ be the number given by second condition in (c). To prove  $F$ is compact consider 
   a sequence $F_1\subset F_2\subset\cdots \subset F$  of  sub-functors such that
   $F=\text{colim}F_i$. Since $F$ has  finite dimensional
 values, there is $m$ such that, for any $w=(w_1,\ldots, w_r)$ with $w=w^n$,
 $F_m(w)=F(w)$.  This together with condition (2) in (c) implies  
 that, for any $v$, $F_m(v)=F(v)$.  The functor $F$ is therefore compact proving the implication  (c)$\Rightarrow$(a).
 \end{proof}

As already mentioned in Section $3$, the category $\text{Fun}({\N}^r,\text{Vect}_K)$ is equivalent to the category of $r$-graded modules over the polynomial ring in $r$-variables with the standard $r$-grading.
In this context (see \cite{gradmodules}) Proposition \ref{prop comactfunctors} can be rephrased as:
\begin{prop}\label{prop modules}
Let  $F$ be an $\N^{r}$-graded module over the polynomial ring $K[x_1,\ldots,x_r]$. The following are equivalent:
\begin{enumerate}
\item[(a)] $F$ is Notherian;
\item[(b)] $F$ is finitely generated;
\item[(c)] $F$ is positively $(n,\ldots ,n)$-determined for some $n \in \N$.
\end{enumerate}
\end{prop}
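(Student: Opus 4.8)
The plan is to obtain Proposition~\ref{prop modules} from Proposition~\ref{prop comactfunctors} simply by translating all three conditions across the equivalence of abelian categories between $\text{Fun}(\N^r,\text{Vect}_K)$ and the category of $\N^r$-graded $K[x_1,\ldots,x_r]$-modules recalled in Section~\ref{sec:funcindbyN}. Under this equivalence a functor $F$ goes to the graded module $\bigoplus_{v\in\N^r}F(v)$ on which $x_i$ acts by $F(v\leq v+e_i)$; subfunctors correspond to graded submodules, directed colimits of chains of subfunctors to unions of graded submodules, free functors (see~\ref{point freefunctors}) to free graded modules, and a minimal cover (see~\ref{point minimal}) to a choice of a minimal homogeneous generating set. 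Consequently $\text{rank}(F)$ equals the minimal cardinality of a homogeneous generating set of the associated module.

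With this dictionary the matching is routine. A functor is of finite rank precisely when its minimal cover is free of finite rank, i.e.\ when the associated module has a finite minimal homogeneous generating set, i.e.\ when it is finitely generated; so \ref{prop comactfunctors}(b) translates into \ref{prop modules}(b). The notion of compactness in~\ref{pt compact} is phrased entirely in terms of subobjects and directed colimits, hence it transports to ``every chain of graded submodules of $F$ with union $F$ terminates''; combined with Proposition~\ref{prop comactfunctors} this shows compact $\Leftrightarrow$ finitely generated, and since $K[x_1,\ldots,x_r]$ is Noetherian (Hilbert basis theorem) a graded module over it is Noetherian if and only if it is finitely generated, giving \ref{prop modules}(a) $\Leftrightarrow$ \ref{prop modules}(b). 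Finally, condition (c) of~\ref{prop comactfunctors} asserts that $F$ has finite-dimensional values and that for some $n$ the map $F\bigl((\min(n,v_1),\ldots,\min(n,v_r))\leq v\bigr)$ is an isomorphism for all $v=(v_1,\ldots,v_r)$; translated, this says $F_v\xrightarrow{\,x_i\,}F_{v+e_i}$ is an isomorphism whenever $v_i\geq n$, with finite-dimensional graded pieces, which is exactly the definition of $F$ being positively $(n,\ldots,n)$-determined in the sense of~\cite{gradmodules}. Thus \ref{prop comactfunctors}(c) corresponds to \ref{prop modules}(c), and the proposition follows.

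The only step needing genuine care is this last translation. One has to check that the single global condition ``$F((\min(n,v_1),\ldots,\min(n,v_r))\leq v)$ is an isomorphism for every $v$'' is equivalent to the apparently weaker one-step condition ``$x_i$ acts invertibly on $F_v$ whenever $v_i\geq n$'', which is a short induction on $\sum_i\max(v_i-n,0)$, and one must confirm that the definition of a positively $(n,\ldots,n)$-determined module used in~\cite{gradmodules} carries the same finite-dimensionality hypothesis on graded components as condition (1) of~\ref{prop comactfunctors}(c). Everything else is formal: the equivalence of abelian categories preserves the exactness, sub/quotient and colimit data involved, so no further argument is required.
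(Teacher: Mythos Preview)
Your argument is correct and is actually more self-contained than what the paper does. The paper treats Proposition~\ref{prop modules} as a restatement rather than something to be reproved: it simply remarks that (a)$\Leftrightarrow$(b) holds because $K[x_1,\ldots,x_r]$ is Noetherian, and that (b)$\Leftrightarrow$(c) is Proposition~2.5 of~\cite{gradmodules}. You instead transport all three conditions across the equivalence of categories and read off the equivalences directly from the already-proved Proposition~\ref{prop comactfunctors}. This buys you independence from Miller's paper for (b)$\Leftrightarrow$(c), at the cost of the small bookkeeping you flag at the end (matching the one-step ``$x_i$ acts invertibly once $v_i\geq n$'' formulation with the global isomorphism condition, and making sure the finite-dimensionality hypothesis is part of the definition of ``positively $(n,\ldots,n)$-determined'' being used). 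Both approaches invoke the Hilbert basis theorem for (a)$\Leftrightarrow$(b), so on that point they coincide. Your caveat about the finite-dimensionality clause is well placed: without it the implication (c)$\Rightarrow$(b) fails (e.g.\ $F_v=K^{\oplus\infty}$ for all $v$ with each $x_i$ acting as the identity), so the intended reading of ``positively $(n,\ldots,n)$-determined'' here must include it.
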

While the equivalence between $(a)$ and $(b)$ depends on the fact that $K[x_1,\ldots,x_r]$ is a Notherian ring, the equivalence of $(c)$ with $(b)$ is proved in Proposition $2.5$ of \cite{gradmodules}.

A direct consequence of~\ref{prop comactfunctors} is that  all the  quotients and all the subfunctors of a compact functor in $\text{\rm Fun}(\N^{r},\text{\rm Vect}_K)$ 
are compact. 
\end{point}

We finish this section with recalling (see~\cite{persistence}) the classification of compact objects in $\text{\rm Fun}(\N,\text{\rm Vect}_K)$:
\begin{point}\label{pt compactr=1}
{\bf Compact objects in $\text{\rm Fun}(\N,\text{\rm Vect}_K)$.}  
Let $w\leq u$ be in $\N$. There is a unique inclusion $K(u,-)\subset K(w,-)$. The cokernel of this inclusion is denoted by  $[w,u)$ and called the bar starting in $w$ and ending in $u$.  Note that such functors are compact of rank $1$ whose $0$-Betti diagram is given by:
\[\beta_0[u,w)(v)=\begin{cases} 1& \text{ if } v=u\\
0&  \text{ if } v\not =u
\end{cases}\]
\begin{prop}\label{prop classification}
Any compact object in $\text{\rm Fun}(\N,\text{\rm Vect}_K)$ is isomorphic to a finite direct sum of functors of the form $[w,u)$ and $K(v,-)$. 
Moreover the isomorphism types of these   summands are uniquely determined by the isomorphism type of the functor.
\end{prop}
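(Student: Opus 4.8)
The plan is to prove existence by exhibiting a length-one free resolution of $F$ and reducing its differential to a graded analogue of Smith normal form, and to prove uniqueness from the rank function of $F$. (Alternatively, via the equivalence recalled after \ref{prop comactfunctors} this is just the structure theorem for finitely generated graded $K[x]$-modules; we sketch a self-contained argument.)

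For existence I would start from a minimal cover. Since $F$ is compact, it is of finite rank (\ref{prop comactfunctors}), so there is a minimal cover $\phi\colon P_0\to F$ with $P_0=\bigoplus_{j=1}^n K(w_j,-)$ free of finite rank. Put $P_1:=\ker\phi\subseteq P_0$. The key classical input is that a subfunctor of a free functor indexed by $\N$ is again free — equivalently, a graded submodule of a graded free $K[x]$-module is graded free (the graded shadow of ``submodules of free modules over a PID are free'', already invoked in the proof of \ref{barr=1}). Moreover $P_1$ is a subfunctor of the compact functor $P_0$, hence compact, hence of finite rank, say $P_1=\bigoplus_{i=1}^m K(u_i,-)$ with $m\le n$. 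Since $\phi$ is minimal, \ref{cor minpresentation} gives that $P_0/\mathrm{rad}(P_0)\to F/\mathrm{rad}(F)$ is an isomorphism, so $P_1=\ker\phi$ lands in $\mathrm{rad}(P_0)$; concretely every component map $K(u_i,-)\to K(w_j,-)$ of the inclusion $\iota\colon P_1\hookrightarrow P_0$ is a scalar multiple $\lambda_{ji}x^{u_i-w_j}$ of the canonical inclusion (and is $0$ unless $w_j\le u_i$).

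Next I would run a graded Gaussian elimination on the matrix $(\lambda_{ji}x^{u_i-w_j})$, using only \emph{homogeneous} changes of basis of $P_0$ and of $P_1$, inducting on $n$. If $m=0$ then $F\cong P_0$ is free. Otherwise pick among all nonzero entries one, say $\lambda_{11}x^{u_1-w_1}$, whose exponent $u_1-w_1$ is \emph{minimal}; this minimality forces $w_j\le w_1$ whenever $\lambda_{j1}\ne0$ and $u_1\le u_i$ whenever $\lambda_{1i}\ne0$, which is precisely what makes the pivot-clearing substitutions $e_1\mapsto e_1':=e_1+\sum_{j\ge2}(\lambda_{j1}/\lambda_{11})x^{w_1-w_j}e_j$ and $f_i\mapsto f_i':=f_i-(\lambda_{1i}/\lambda_{11})x^{u_i-u_1}f_1$ into legitimate graded automorphisms (the exponents $w_1-w_j$ and $u_i-u_1$ are then $\ge0$). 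After these moves $\iota$ becomes a direct sum $\bigl(K(u_1,-)\xrightarrow{\lambda_{11}x^{u_1-w_1}}K(w_1,-)\bigr)\oplus\iota'$ with $\iota'\colon\bigoplus_{i\ge2}K(u_i,-)\to\bigoplus_{j\ge2}K(w_j,-)$ still a monomorphism, and induction handles $\iota'$. Passing to cokernels and using that $\mathrm{coker}\bigl(K(u,-)\hookrightarrow K(w,-)\bigr)=[w,u)$ (and $=0$ if $u=w$), while the $n-m$ untouched summands $K(w_j,-)$ of $P_0$ contribute free summands, yields the desired decomposition of $F=\mathrm{coker}(\iota)$.

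For uniqueness I would show that the numbers $\rho(a,b):=\dim_K\bigl(\mathrm{im}(F(a\le b)\colon F(a)\to F(b))\bigr)$, for $a\le b$ in $\N$, are isomorphism invariants of $F$ and conversely determine the multiset of summands. Indeed, for $F\cong\bigoplus_{i\in I}[w_i,u_i)\oplus\bigoplus_{j\in J}K(v_j,-)$ a direct computation gives $\rho(a,b)=\#\{i:\ w_i\le a,\ u_i>b\}+\#\{j:\ v_j\le a\}$; taking $b$ larger than all the $u_i$ isolates $\#\{j:\ v_j\le a\}$, and then a standard finite-difference (inclusion–exclusion) argument in both variables — exactly the passage from the rank invariant to the persistence diagram in \cite{persistence} — recovers each multiplicity $\#\{j:\ v_j=v\}$ and $\#\{i:\ w_i=w,\ u_i=u\}$. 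The main obstacle is the bookkeeping in the graded elimination: one must insist on a pivot of minimal exponent so that the elimination coefficients remain honest homogeneous morphisms (the graded avatar of the Euclidean/gcd step in the classical Smith normal form over $K[x]$); a secondary subtlety is that $\beta_0F$ together with the isomorphism type of the syzygy module $P_1$ do not by themselves pair bar-endpoints with bar-startpoints, which is why one must descend to the finer invariant $\rho$.
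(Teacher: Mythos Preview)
Your argument is correct but follows a genuinely different route from the paper's. For existence, the paper peels off one indecomposable at a time: it takes a nonzero element $x$ in the smallest nonzero value $G(w)$, tracks how far $x$ survives, and splits off either $K(w,-)$ (via projectivity) or $[w,u)$ (via an explicit section), then inducts on rank. Your approach instead builds a length-one minimal free resolution $0\to P_1\to P_0\to F\to 0$ and diagonalises the differential by a graded Smith normal form. Your version imports the lemma ``subfunctors of free functors over $\N$ are free'' (which the paper invokes in~\ref{barr=1} but never proves in the appendix), whereas the paper's direct splitting avoids this entirely and is thus more self-contained; on the other hand your resolution picture is closer to how one actually computes barcodes and makes the role of the minimal presentation explicit. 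For uniqueness, the paper only records $\beta_0 G$ and $\beta_0\ker(F\to G)$, i.e.\ the multisets of startpoints and endpoints separately; your rank-invariant argument via $\rho(a,b)$ is finer and recovers the pairing, and you correctly flag at the end that the coarser Betti data alone do not determine which end goes with which start (e.g.\ $[0,2)\oplus[1,3)$ versus $[0,3)\oplus[1,2)$ have identical $\beta_0$ and $\beta_1$).
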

\begin{proof}
Assume  not all the compact functors can be expressed as  a direct sum of bars and free functors and let $G$ be a such a functor with  minimal rank.  Since $G$   is compact, there is $l$ in $\N$, such that $G(l\leq v)$ is an isomorphism for any $v\geq l$. Let $w:=\text{min}\{v\in \N\ |\ G(v)\not=0\}$. Such $w$ exists since $G$ can not be the zero functor. Note that $w\leq l$. Choose an element $x\not=0$ in $G(w)$.   Consider the set $\{v\in \N\ |\ v\geq w\text{ and } G(w\leq v)(x)\not=0\}$. If this set
does not have a maximum, define $G(l)\to K$ to be any map that maps the element $G(w\leq l)(x)$ to $1$.
This linear map can be extended uniquely to a surjective map $\phi\colon G\to K(w,-)$. Since  $K(w,-)$ is projective,
$G$ is a direct sum of $K(w,-)$ and $\text{ker}(\phi)$. As $\text{ker}(\phi)$ has a smaller rank than $G$, it can be expressed as a direct sum of bars and free functors. It would then follow  $G$ itself is such a direct sum, contradicting the assumption. We can then define $u=\text{max}\{v\in \N\ |\ v\geq w\text{ and } G(w\leq v)(x)\not=0\}$ and set  $G(u)\to K$ to be any map that maps the element $G(w\leq u)(x)$ to $1$. This linear map can be extended uniquely to a surjective map $\phi\colon G\to [w,u)$. This map has  a section given by the inclusion $[w,u)\subset G$
which maps $1$ in $[w,u)(w)=K$ to $x$. The functor $G$ can be then expressed as a direct sum $[w,u)\oplus \text{ker}(\phi)$. That  leads to a contradiction by the same argument as before. That means that such a $G$
does not exists and all compact functors can be expressed as direct sums of bars and free functors.

For the uniqueness, note that  if $G$ is isomorphic to $ \bigoplus [w_i,u_i)\oplus\bigoplus K(v_j,-)$, then
$\beta_0G$
determines the starting points $w_i$'s and $v_j$'s and hence these numbers are uniquely determined
by $G$.  Let us choose a minimal  free cover $F\to G$.  The ends $u_i$'s  are determined by
$\beta_0\text{ker}(F\to G)$ and hence again they depend only on the   isomorphism type of $G$.
\end{proof}
\end{point}

\addcontentsline{toc}{section}{References}

\end{document}